\newcommand{\Q}{\mathbb{Q}}
\newcommand{\Z}{\mathbb{Z}}
\newcommand{\Aut}{{\rm Aut}}
\newcommand{\Isom}{{\rm Isom}}
\renewcommand{\O}{\mathcal{O}}
\newcommand{\N}{{\rm N}}
\newcommand{\id}{{\rm id}}
\newcommand{\U}{{\mathcal{U}}}
\newcommand{\UU}{{\overline{\mathcal{U}}}}
\newcommand{\B}{{\mathcal{B}}}
\newcommand{\Spec}{{\text{Spec}}}
\newcommand{\V}{\mathcal{V}}
\newcommand{\Un}{\text{Un}}
\newcommand{\Hom}{\text{Hom}}
\newcommand{\PP}{\infty}
\newcommand{\Res}{\text{Res}}
\newcommand{\res}{\text{res}}
\newcommand{\F}{\mathcal{F}}
\newcommand{\Udr}{U^{dr}}
\theoremstyle{definition} 
\newtheorem{definition}[]{Definition}
\newtheorem{example}[definition]{Example}
\newtheorem{algorithm}[definition]{Algorithm}
\theoremstyle{plain}
\newtheorem{lemma}[definition]{Lemma}
\newtheorem{proposition}[definition]{Proposition}
\theoremstyle{remark}
\newtheorem{remark}[definition]{Remark}
\theoremstyle{plain}
\newtheorem{thm}[definition]{Theorem}
\numberwithin{definition}{section}
\numberwithin{equation}{section}
\begin{document}

\title{Computation of the Unipotent Albanese Map on Elliptic and Hyperelliptic Curves}
\author{Jamie Beacom \footnote{Mathematical Institute, University of Oxford, Andrew Wiles Building, Radcliffe Observatory Quarter, Woodstock
Road, Oxford, OX2 6GG, UK}}

\maketitle

\begin{abstract}
We study the unipotent Albanese map appearing in the non-abelian Chabauty method of Minhyong Kim. In particular we explore the explicit computation of the $p$-adic de Rham period map $j^{dr}_n$ on elliptic and hyperelliptic curves over number fields via their universal unipotent connections $\U$. 

Several algorithms  forming part of the computation of finite level versions $j^{dr}_n$ of the unipotent Albanese maps are presented. The computation of the logarithmic extension of $\U$ in general requires a description in terms of an open covering, and can be regarded as a simple example of computational descent theory. We also demonstrate a constructive version of a lemma of Hadian used in the computation of the Hodge filtration on $\U$ over affine elliptic and odd hyperelliptic curves. 

We use these algorithms to present some new examples describing the co-ordinates of some of these period maps. This description will be given in terms iterated $p$-adic Coleman integrals. We also consider the computation of the co-ordinates if we replace the rational basepoint with a tangential basepoint, and present some new examples here as well. 

\end{abstract}

\section{Introduction}

This paper will examine some explicit aspects of a method introduced by Minhyong Kim in \cite{kim09}. This method, known as the ``Chabauty-Kim method" or ``non-abelian Chabauty" is an analogue to and extension of an earlier method developed by Chabauty in \cite{chabauty41} and made more explicit by Coleman in \cite{coleman85b}.

Suppose $X$ is a curve over a number field $K$. Let $v$ be a non-archimedean place of $K$, and $K_v$ its completion at $v$. The main restriction in the method of Chabauty-Coleman is the rank-genus condition, which requires our curve to have genus strictly greater than the rank of its Jacobian. Kim in \cite{kim09} provides a possible way around this by replacing the Jacobian with a certain quotient of the de Rham fundamental group $U^{dr}$ of $X$, relative to some fixed basepoint $b \in X(K)$. The quotient in question comes from the Hodge filtration $F^{\bullet}$ on $U^{dr}$ defined in \cite{vol03}. In \cite{kim09} Kim shows that the quotient group $F^0U^{dr} \setminus U^{dr}$ naturally classifies the de Rham path spaces. This property of the quotient group allows us to define a \textit{higher de Rham unipotent Albanese map} $j^{dr}$. Taking quotients of $U^{dr}$ by its lower central series yields a chain of sub-quotients $U^{dr}_n$, each of which also carries a Hodge filtration. Composition of the map $j^{dr}$ with the natural projection $U^{dr} \twoheadrightarrow U^{dr}_n$ give us a family of maps $j_n^{dr} : X(K_v) \rightarrow F^0U^{dr}_n \setminus U^{dr}_n$ which are compatible in the following sense:  

\begin{center}
\begin{tikzcd}
& \vdots \arrow[d] \\
& F^0U^{dr}_n \setminus U^{dr}_n \arrow[d] \\
X(K_v) \arrow[r, "j^{dr}_{n-1}", swap] \arrow[ru, "j^{dr}_n"]& F^0U^{dr}_{n-1} \setminus U^{dr}_{n-1} \\
\end{tikzcd}
\end{center}

The maps $j^{dr}_n$ have a Zariski dense image in the quotient $F^0U^{dr}_n \setminus U^{dr}_n$. It is this property which allows us to carry out a Chabauty type argument. Kim constructs a variety over $\Q_p$, the \textit{Selmer variety}, with suitable finite level versions denoted here by $Sel^{glob}_n$, along with finite level \'{e}tale unipotent Albanese maps $j^{\text{\'{e}t}}_n: X(K) \rightarrow Sel^{glob}_n$. There is also an algebraic map $\log_{v,n}:Sel^{glob}_n \rightarrow F^0U^{dr}_n \setminus U^{dr}_n(K_v)$ such that the the image of $X(K)$ in $F^0U^{dr}_n \setminus U^{dr}_n(K_v)$ will be contained in the image of $Sel^{glob}_n$. The connection to Chabauty then is that if  the following \textit{dimension hypothesis} holds

\begin{equation}\label{Equation: Dimension Hypothesis}
\dim (Sel^{glob}_n) < \dim (F^0U^{dr}_n \setminus U^{dr}_n)
\end{equation}

the algebraicity of $\log_{v,n}$ implies the existence of an algebraic function on $X(K_v)$ such that $X(K)$ is contained in its zero locus. The density of the image of $X(K_v)$ under $j^{dr}_n$ in $F^0U^{dr}_n \setminus U^{dr}_n$ implies that this algebraic function is non-zero on $X(K_v)$, and hence its zero locus is finite by the $p$-adic Weierstrass Preparation Theorem. The finiteness of $X(K)$ then follows. 

One need not only consider $K$-rational points in the above. Let $R$ be the ring of integers of $K_v$ and let $S$ be some finite set of places of $K$ (which is usually taken to contain the places above primes of bad reduction for $X$). Then we can also try to use the machinery above to prove finiteness results for $\mathcal{X}(R_S)$, where $\mathcal{X}$ is some smooth model of $X$ over $R$ and $R_S$ are the $S$-integers of $K$.

There are a growing number of instances where the dimension hypothesis has been verified, yielding Diophantine applications. In a roughly chronological order, on the thrice punctured projective line $\mathbb{P}^1_{\Q}- \lbrace 0,1,\infty \rbrace$ \cite{kim05} it has been verified for the $\Z_S$-points and for the $S$-integers of a totally real field \cite{hadian11}; if $X$ is a once punctured elliptic curve with CM over $\Q$ then $n \geq \#S + r+1$ is sufficient to show the finiteness of $X(\Z_S)$ for some $r$ depending on $E$ \cite{kim10b} and if all the Tamagawa numbers are $1$ and $X$ has rank $1$ then $n=2$ is sufficient \cite{kim10}; if $X$ is a complete hyperbolic curve with CM Jacobian \cite{ck10}; and if $X$ is a complete hyperbolic curve and a solvable cover of $\mathbb{P}^1_{\Q}$ (and hence any smooth superelliptic curve over $\Q$ with genus at least $2$ \cite{eh17}. In \cite{baldog16b},\cite{baldog16a} Balakrishnan and Dogra have made an explicit application of non-abelian Chabauty when $n=2$ - what they refer to as `Quadratic Chabauty' - to $p$-adic heights on elliptic and hyperelliptic curves. They extended their methods in \cite{bd18} to propose an effective Chabauty-Kim theorem which provides bounds of the type produced by Coleman under certain hypothesis even when $r=g$.  Recently with M\"{u}ller, Tuitman and Vonk they demonstrated an application of the Chabauty-Kim method in \cite{bdmtv17} to a non-hyperelliptic curve and used the method to complete the classification of non-CM elliptic curves over $\Q$ with split Cartan level structure.  Also of interest is the fact that a number of important conjectures - Bloch-Kato, Fontaine-Mazur and Jansenn - each imply the existence of an $n \gg 0$ such that the dimension hypothesis is satisfied yielding an effective form of Falting's Theorem. 

The algebraic function whose existence is implied by the dimension hypothesis will be described locally as a $p$-adic analytic power series, and in fact will be defined by $p$-adic iterated integrals. These $p$-adic iterated integrals will come from the parallel transport associated to the $n$-th finite level quotient of the universal unipotent connection $\U$ associated to $X$. 

The universal unipotent connection $\U$ is a universal object among pointed unipotent connections on $X$ and is in fact a pro-unipotent connection with finite level quotients $\U_n$. It transpires that the dual of $\U$ is the co-ordinate ring of the canonical $U^{dr}$ torsor $P^{dr}$. As outlined in \cite{kim09} the universal connection comes with an associated Hodge filtration inducing the Hodge filtration on $U^{dr}$. As shall be shown in Section \ref{Section 2}, in order to determine the co-ordinates of $j^{dr}_n(x)$ it will be necessary to determine this Hodge filtration. Lemma 3.6 in \cite{hadian11} will be used to demonstrate how this Hodge filtration may be computed when $X$ is an affine elliptic or hyperelliptic curve. The practical computation of the Hodge filtration is motivated by the approach of Dogra in \cite{dogra15} and Balakrishnan and Dogra in \cite{baldog16b}.

As part of this computation the logarithmic extension $\UU$ of the universal connection $\U$ on $X$ will need to be computed. In Section \ref{Section 3} a general algorithm for computation of $\UU$ is outlined (Algorithm \ref{Algorithm - Computing GT on UC}), which is an example of computational descent theory. That is,  the logarithmic extension of the universal connection on the complement $X=C-D$ of a complete curve $C$ is computed as a collection of logarithmic connections on an open cover $(U_i)$ of $X$ together with isomorphism satisfying certain descent conditions. This will be necessary because unlike the case of $\mathbb{P}^1 \setminus \lbrace 0,1,\infty \rbrace$, for curves of positive genus the extension $\UU$ will in general have non-trivial bundle, or it may not be possible to express the connection on just one affine piece as having logarithmic poles at all the missing points. This is what necessitates taking the approach outlined above. The existence of suitable logarithmic extensions of the $\U_n$ compatible with projection is proven (Theorem \ref{Theorem - Compute GT on UC of EC}). Conditions are imposed on the extensions, and using these two algorithms are provided, Algorithm \ref{Algorithm - Computing GT on UC of EC} and Algorithm \ref{Algorithm - Computing GT on UC of HEC}, allowing for the iterative computation of $\UU_n$ for elliptic and odd hyperelliptic curves respectively. We then demonstrate how the computation of the Hodge filtration is contained in the computation of these extensions. In Theorem \ref{Theorem - Hodge Filtration on HEC or EC} we present a  constructive version of Lemma 3.6 in \cite{hadian11} when $X$ is an elliptic curve or an odd hyperelliptic curve. From this we develop Algorithm \ref{Algorithm - Compute HF on HEC or EC} for the explicit computation of the Hodge filtration on the $\U_n$ in this case. 

In Section \ref{Section 5} we apply the previous algorithms to the computation of $j^{dr}_n$ for several new $n$. Previously, $j^{dr}_n$ has been determined for elliptic curves only when $n=1,2$ and for hyperelliptic curves it has only been computed for $n=1,2$ in specific cases. We compute the co-ordinates of $j^{dr}_3$ (Proposition \ref{Prop - Level 3 Map EC}) and $j^{dr}_4$ (Proposition \ref{Prop - Level 4 Map EC}) for elliptic curves, and $j^{dr}_2$ (Proposition \ref{Prop - level 2 map hec}) for general odd hyperelliptic curves. In Section \ref{Section 6} we consider the scenario where our basepoint is tangential: this is useful in those cases where a rational basepoint is lacking, and to provide a greater wealth of examples. We provide new explicit descriptions of the coordinates of the maps $j^{dr}_2$ (Proposition \ref{Prop - Level 2 TB Map EC}) and $j^{dr}_3$ (Proposition \ref{Prop - Level 3 TB Map EC}) for elliptic curves with a tangential basepoint at infinity, and $j^{dr}_2$ (Proposition \ref{Prop - level 2 tb map hec}) for odd hyperelliptic curves with a tangential basepoint at infinity.

Although we concentrate on elliptic and odd degree hyperelliptic curves in this paper it should be possible to generalise much of the material to more general classes of curves. Affine elliptic curves and odd degree hyperelliptic curves, having one point at infinity removed from the complete curves, represented a relatively simply yet broad class of examples to consider. It should be simple to translate the results of Section \ref{Section 3} for more general curves. However, the conditions in Theorem \ref{Theorem - Hodge Filtration on HEC or EC} and the proof of said theorem are already very complicated and this is where I would expect to find some difficulty in generalising these results for more general curves.


\textbf{Acknowledgments:}The author would like to thank Minhyong Kim for introducing him to the non-abelian Chabauty method, for the suggestion of this problem, and for his help and encouragement. This work was completed while the author was an EPSRC funded D.Phil. student (Award No. MATH1503) at the Mathematical Institute, University of Oxford. He would also like to thank the referree for many helpful comments.

\section{Background} \label{Section 2}

\subsection{The universal unipotent connection}

Here we introduce unipotent connections and background material on the unipotent Albanese map. Throughout both sections \cite{kim09} is used as a primary reference for definitions and results. 

Let \index{$K$}$K$ be a field of characteristic $0$, let $X$ be a $K$-scheme, and suppose we have a fixed basepoint $b \in X(K)$. Let $\mathcal{V}$ be a vector bundle on $X$. 

\begin{definition}\label{definition - connection} A \textit{connection} on $\mathcal{V}$ is a $K$-linear morphism of sheaves \index{$\nabla$}$\nabla$ such that

\[
\nabla: \mathcal{V} \rightarrow \mathcal{V}\otimes \Omega^1_{X/K}
\]satisfying the Leibniz condition: for $U \subset X$ open, $\nabla(fs) = s \otimes df + f\nabla(s)$ where $f \in \O_X(U)$, $s \in \mathcal{V}(U)$. Here $\Omega^1_{X/K}$ is the sheaf of $1$-forms on $X/K$. 
\end{definition}

\begin{remark}
We will often refer to a vector bundle $\mathcal{V}$ with connection $\nabla$ simply as a connection and write such objects either as $(\mathcal{V}, \nabla)$ or simply as $\mathcal{V}$. 
\end{remark}

\begin{remark}
We may extend $\nabla$ to a covariant derivative $\nabla^1: \mathcal{V}\otimes \Omega^1_{X/K} \rightarrow \mathcal{V}\otimes \Omega^2_{X/K}$ as follows: for $U \subset X$ open define $\nabla^1(s \otimes \omega):= s \otimes d\omega - \nabla(s) \wedge \omega$ for $s \in \mathcal{V}(U),\omega \in \Omega^1_{X/K}(U)$
\end{remark}

\begin{definition}
We say that $\mathcal{V}$ is a \textit{flat} or \textit{integrable} connection if the induced morphism 

\[
\nabla^1 \circ \nabla : \mathcal{V} \rightarrow \mathcal{V} \otimes \Omega_{X/K}^2
\]is the zero map. Note that if $X$ is a curve, then any connection $\mathcal{V}$ is automatically flat. 
\end{definition}

Given a connection \index{$(\mathcal{V}, \nabla)$}$(\mathcal{V}, \nabla)$ with $\mathcal{V}$ of rank $n$, there is a matrix $\Omega \in  \mathfrak{gl}_n \otimes \Omega^1_{X/K} $ called the \textit{connection matrix} which determines $\nabla$: suppose that we have a local basis $e_i: \mathcal{O}_X \hookrightarrow \mathcal{V}$ ($1 \leq i \leq n$). Let $U \subset X$ be some trivialising neighbourhood in $X$. Then $\nabla(e_i) \in \mathcal{V} \otimes \Omega^1_{X/K} (U)$, and so there are $\omega_{ij} \in \Omega^1_{X/K}(U)$ such that 

\[
\nabla(e_i) = \sum_j e_j \otimes \omega_{ij}
\]We let $\Omega := (\omega_{ij})$. We may show that in matrix notation $\nabla(e\cdot f)=e\cdot (df+\Omega\cdot f)$, and so $\nabla$ acts locally as $d+\Omega$.

\begin{remark}
A connection $(\mathcal{V}, \nabla = d+ \Omega)$ is flat if and only if 

\[
d\Omega + \Omega \wedge \Omega =0
\]

\end{remark} 

\begin{definition}
A \textit{morphism of connections} $(\mathcal{V}, \nabla) \rightarrow (\mathcal{W}, \nabla')$ is a morphism $f:\mathcal{V} \rightarrow \mathcal{W}$ of sheaves preserving the connection. 
\end{definition}

\begin{definition}
A connection $\mathcal{V}$ is \textit{unipotent} with \textit{index of unipotency} less than or equal to $n$ if there is a decreasing sequence of sub-connections

\[
\mathcal{V}=\mathcal{V}_n \supset \mathcal{V}_{n-1} \supset ... \supset \mathcal{V}_1 \supset \mathcal{V}_0
\] such that the quotients $\mathcal{V}_{i+1}/\mathcal{V}_i$ are isomorphic to a direct sum of copies of $(\O_X,d)$ i.e. they are trivial. 
\end{definition}

We obtain the following category on $X$:

\begin{definition} \label{Definition - Category of UC n}
Let \index{$\Un_n(X)$}$\Un_n(X)$ be defined to be category whose objects are unipotent vector bundles on $X$ with flat connection having index of unipotency less than or equal to $n$ with morphisms being morphisms of connections. Define \index{$\Un(X)$}$\Un(X)$ to be $\cup_{n\geq 1} \Un_n(X)$
\end{definition} 

Given some $b \in X(K)$ we can define a functor $e_b: \Un(X) \rightarrow \text{Vect}_K$ from $\Un(X)$ to the category of vector spaces over $K$, sending $\mathcal{V} \mapsto \mathcal{V}_b:=b^*\mathcal{V}$. We may show that \index{$e_b$}$e_b$ is a \textit{fibre functor} $(\Un(X),e_b)$ is a neutral Tannakian category. In \cite{andiovkim} the authors make the following definition: 

\begin{definition}
Given a neutral Tannakian category $(\mathcal{C}, \omega)$ over a field $k$, define the \textit{pointed category} \index{$\mathcal{C}^*$}$\mathcal{C}^*$ to be the category whose objects are pairs $(V,v)$, where $V$ is an object of $\mathcal{C}$ and $v \in \omega(V)$ and morphisms $f: (V,v) \rightarrow (W,w)$ being morphisms $f:V \rightarrow W$ in $\mathcal{C}$ such that $\omega(f)(v)=w$.
\end{definition} 

With this in mind we may define the universal connection as a universal projective system of connections:

\begin{definition}\label{Definition - Universal Projective System} 
A projective system of objects \index{$\lbrace (\U_n, u_n) \rbrace_{n \geq 0}$}$\lbrace (\U_n, u_n) \rbrace_{n \geq 0}$ in $\Un(X)^*$ with $\U_n$ having index of unipotency $\leq n$ for all $n \geq 0$ is called \textit{universal} if for every $(\mathcal{V},v)$ in $\Un(X)$ with index of unipotency $\leq n$ there is a unique morphism in $\Un(X)^*$ $\phi$ such that

\[
\phi: (\U_n, u_n) \rightarrow (\mathcal{V},v).
\]That is, there is a morphism $\phi:\U_n \rightarrow \mathcal{V}$ of connections such that 

\[
\phi_b: u_n (\in b^*\U_n) \mapsto v (\in b^*\mathcal{V}).
\]

\end{definition}

It is shown in \cite[I, Chapter 2]{wildeshaus} that such a universal projective system $\lbrace (\U_n, u_n) \rbrace_{n \geq 0}$ exists in $\Un(X)^*$. There it is the object referred to as the ``generic pro-sheaf", $\mathcal{G}_{b,dr}$. Another construction is contained in Theorem A in \cite{wojt93}. A nice reference for the construction on a general scheme $X$ over a field $K$ is contained in \cite[\S 1]{kim09} where we see that we can take $(\U_0,u_0)= (\O_X,1)$ and $u_n$ to be the point $1 \in b^*\U_n$ for all $n$.

\begin{definition}
Let $\lbrace \U_n, 1 \rbrace_{n \geq 0}$ be a universal projective system in $\Un(X)^*$. Then we call this projective system the universal unipotent connection \index{$(\U,u)$}$(\U,u)$ on $X$. 
\end{definition}

\begin{remark}
By abuse of notation we will often also refer to the projective limit $\lim_{\leftarrow} \U_n$ as the universal connection \index{$\U$}$\U$. This is a pro-unipotent connection but as it does not have a finite index of unipotency it is not an object of $\Un(X)$. However, it will be useful to consider this pro-object in the next section where it's relationship to the de Rham fundamental group will be explored. 
\end{remark}

\begin{remark}
We will throughout this paper refer to the universal unipotent connection on $X$ simply as the universal connection on $X$. 
\end{remark}

When $X$ is an affine curve we have an explicit description of the universal unipotent pointed connection:

\begin{definition} \label{Definition - UC on General Affine Curve}
Let $C$ be a smooth projective curve of genus $g$ over a field $K$ of characteristic $0$. Let $D$ be a non-empty divisor of size $r$ and let $X:=C-D$. Let $\alpha_0$,...,$\alpha_{2g+r-2}$ be $1$-forms on $X$ such that their cohomology classes are a $K$-basis of $H^1_{dr}(X/K)$. We will assume that this basis is chosen so that the cohomology classes of $\alpha_0,...,\alpha_{g-1}$ form a $K$-basis of $H^0(C, \Omega_{X/K}^1)$. Let $V_{dr}:= H^1_{dr}(X/K)^{\vee}$ with basis elements \index{$A_i$}$A_i$ dual to \index{$\alpha_i$}$\alpha_i$. Let \index{$R$}$R$ be the tensor algebra of \index{$V_{dr}$}$V_{dr}$ i.e.

\[
R:= \bigoplus_{k \geq 0} V_{dr}^{\otimes k}.
\]

Write the basis element $A_{i_1}\otimes A_{i_2}\otimes ...\otimes A_{i_k}$ as the word $A_{i_1}A_{i_2}...A_{i_k}$. Let $I$ be the two-sided ideal generated by $A_0,...,A_{2g+r-2}$ and define \index{$R_n$}$R_n$ to be the quotient

\[
R_n:=R/I^{n+1}
\]
of $R$ by words of length $\geq n+1$. Then define \index{$\U_n$}

\[
\U_n:= R_n \otimes \O_X
\]
and let $\nabla_n$ be the connection such that 

\[
f \in R_n \mapsto -\sum_i A_i f \otimes\alpha_i.
\]
For convenience we will often write $\nabla$ instead of $\nabla_n$.
\end{definition} 
\begin{thm}[\cite{kim09}, Lemma 3] \label{Theorem - Kim - UC on General Affine Curve}
Let $X$, $\U_n$ be as in Definition \ref{Definition - UC on General Affine Curve}. For every $(\mathcal{V},v)$ a pointed connection with index of unipotency $\leq n$ there is a unique map $(\U_n, 1) \rightarrow (\mathcal{V},v)$. 
\end{thm}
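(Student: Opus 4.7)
The plan is to prove existence and uniqueness of $\phi: (\U_n, 1) \to (\mathcal{V}, v)$ by induction on $n$, exploiting parallel short exact sequences of connections on the two sides.

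For the base case $n = 0$, both $\U_0 = \O_X$ and $(\mathcal{V}, v)$ are trivial connections; since $X$ is affine, $\mathcal{V}$ is globally isomorphic to $(\O_X^{\oplus r}, d)$, and there is a unique flat global section $s_v$ with $s_v(b) = v$. Setting $\phi(1) := s_v$ gives the desired map, and uniqueness is forced because any morphism of trivial connections sends the flat section $1$ to a flat section, which is determined by its fiber at $b$.

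For the inductive step, observe that the filtration of $R_n$ by powers of the augmentation ideal $I$ induces a short exact sequence of connections
$$0 \to V_{dr}^{\otimes n} \otimes \O_X \longrightarrow \U_n \longrightarrow \U_{n-1} \to 0,$$
whose sub-bundle carries the trivial connection (left multiplication by any $A_i$ sends a length-$n$ word to a length-$(n+1)$ word, which vanishes in $R_n$). Dually, the unipotency filtration on $(\mathcal{V}, v)$ yields
$$0 \to \mathcal{V}_{n-1} \longrightarrow \mathcal{V} \longrightarrow \mathcal{V}/\mathcal{V}_{n-1} \to 0,$$
with $\mathcal{V}/\mathcal{V}_{n-1}$ trivial and $\mathcal{V}_{n-1}$ of index $\leq n-1$. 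Applied to $(\mathcal{V}/\mathcal{V}_{n-1}, \bar v)$, the inductive hypothesis gives a unique $\bar\phi: \U_{n-1} \to \mathcal{V}/\mathcal{V}_{n-1}$. Pulling back along $\U_n \to \U_{n-1}$ produces a map $\U_n \to \mathcal{V}/\mathcal{V}_{n-1}$, which we wish to lift. Since $X$ is affine the underlying $\O_X$-module sequence for $\mathcal{V}$ splits, so we can pick an $\O_X$-linear lift; its failure to be flat is encoded by a map valued in $\mathcal{V}_{n-1}\otimes\Omega^1_{X/K}$, which we cancel using the fact that the classes $[\alpha_i]$ are a $K$-basis of $H^1_{dr}(X/K)$ together with inductively constructed morphisms into $\mathcal{V}_{n-1}$. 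A final horizontal correction inside $\mathcal{V}_{n-1}$ enforces $\phi(1)_b = v$.

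For uniqueness, given two lifts $\phi, \phi'$ the difference is a morphism $\psi: \U_n \to \mathcal{V}_{n-1}$ whose composite with $\U_n \to \mathcal{V}/\mathcal{V}_{n-1}$ vanishes and with $\psi(1)_b = 0$; the inductive uniqueness applied to $(\mathcal{V}_{n-1}, 0)$ forces $\psi = 0$. The main obstacle is the explicit construction of the connection-preserving lift in the inductive step: this is the step where both the affineness of $X$ (guaranteeing the existence of $\O_X$-linear lifts and the vanishing of relevant $H^1$'s) and the specific recipe $\nabla_{\U_n}(f) = -\sum_i A_i f \otimes \alpha_i$ with $\{\alpha_i\}$ a basis of $H^1_{dr}(X/K)$ are essential, since this encodes the universal way to realize any strictly upper triangular connection matrix modulo gauge transformations.
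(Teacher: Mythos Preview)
The paper does not prove this statement itself; it is quoted from \cite{kim09}. The argument there is more direct than yours: since $X$ is affine one trivializes the bundle underlying $\mathcal{V}$, and then, using that the $[\alpha_i]$ span $H^1_{dr}(X)$, one iteratively gauges the connection matrix into the form $\sum_i N_i\alpha_i$ with constant nilpotent $N_i$. The map $\phi$ then sends a word $A_{i_1}\cdots A_{i_k}$ to $(-N_{i_1})\cdots(-N_{i_k})v$, and both existence and uniqueness can be read off from this formula.

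Your inductive approach has a genuine gap in the uniqueness step. You assert that the difference $\psi:\U_n\to\mathcal{V}_{n-1}$ of two lifts vanishes by ``inductive uniqueness applied to $(\mathcal{V}_{n-1},0)$'', but the inductive hypothesis concerns morphisms out of $\U_{n-1}$, whereas $\psi$ has source $\U_n$. To invoke it you would need $\psi$ to factor through $\U_n\twoheadrightarrow\U_{n-1}$, i.e.\ to annihilate $V_{dr}^{\otimes n}\otimes\O_X$, and this is not automatic: length-$n$ words are horizontal in $\U_n$ and can a~priori map to any nonzero horizontal section of $\mathcal{V}_{n-1}$. What actually forces $\psi=0$ is a downward induction on word length combined with the unipotent filtration of the target: projecting to a trivial graded piece, the relation $d\,\overline{\psi(w)}=-\sum_i\overline{\psi(A_iw)}\,\alpha_i$ exhibits an exact form as a constant-coefficient combination of the $\alpha_i$, and linear independence of the $[\alpha_i]$ in $H^1_{dr}(X)$ kills those coefficients. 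Your existence sketch has an analogous weakness: the ``inductively constructed morphisms into $\mathcal{V}_{n-1}$'' are morphisms of connections and hence contribute zero to the failure-to-be-flat defect you are trying to cancel; the actual cancellation again comes from the cohomological basis property of the $\alpha_i$, which you name but never deploy.
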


We will need to consider filtrations on connections throughout this paper, and so we make the following definition:

\begin{definition}
By a \textit{filtered connection} $\mathcal{V}:= (\mathcal{V}, \nabla, F^{\bullet})$ on $X$ we mean a vector bundle $\mathcal{V}$ with a connection $\nabla$ which is equipped with a decreasing filtration \index{$F^{\bullet}$}by sub-bundles

\[
\mathcal{V}=F^m \mathcal{V} \subset F^{m+1} \mathcal{V} \subset... \subset F^n \mathcal{V}=0
\] for some $m<n \in \Z$ satisfying the \textit{Griffiths' transversality} property:

\[
\nabla (F^i \mathcal{V}) \subset F^{i-1}\mathcal{V} \otimes \Omega^1_{X/K}
\] for all $i$.  
\end{definition}

\subsection{The de Rham unipotent Albanese map}
From now on, we will assume the following: let $C$ be a smooth curve over a number field $K$ and $D$ a non-empty divisor defined over $K$. Then define $X:=C-D$ and let $b \in X(K)$ be a rational basepoint. Let \index{$v$}$v$ be a non-Archimedean valuation on $K$ lying above a rational prime $p$ of good reduction for $X$ and take \index{$K_v$}$K_v$ to be the completion of $K$ with respect to $v$. Let $R_v$ be the ring of integers of $K_v$ and \index{$k$}$k$ its residue field. Finally, let $X_v:=X \otimes K_v$ denote the basechange of $X$.

Recall from Definition \ref{Definition - Category of UC n} that $\Un(X_v)$ is defined to be the category of unipotent connections on $X_v$ with finite unipotency index. Note we have dropped the requirement that the connections are flat here since $X_v$ is a curve and, therefore, $\Omega^2_{X_v/K_v}=0$. Given $b \in X(K)$ then the functor 
\[
e_b: \Un(X_v) \mapsto \text{Vect}_{K_v}; \mathcal{V}\mapsto b^*\mathcal{V}
\]
is a fibre functor and $(\Un(X_v),e_b)$ is a neutral Tannakian category. We denote by $\langle \Un_n(X_v)\rangle$ the Tannakian sub-category of $\Un(X_v)$ generated by $\Un_n(X_v)$ and we let $\langle e^n_b \rangle$ denote the restriction of $e_b$ to this sub-category. Using Tannaka duality we make the following definition:

\begin{definition}
The \emph{de Rham fundamental group} \index{$U^{dr}$}$U^{dr}$ of $X_v$ with basepoint $b$ is that group scheme associated to the Tannakian category $(\Un(X_v),e_b)$ representing $\Aut^{\otimes}(e_b)$. There is also a group scheme \index{$U^{dr}_n$}$U^{dr}_n$ associated to the Tannakian category $(\langle\Un_n(X_v)\rangle,\langle e^n_b\rangle)$ and representing $\Aut^{\otimes}(\langle e^n_b\rangle)$. For $x \in X(K_v)$ the \emph{de Rham path torsor} \index{$P^{dr}(x)$}$P^{dr}(x)$ is the right $U^{dr}$-torsor representing $\Isom^{\otimes}(e_b,e_x)$ and \index{$P^{dr}_n(x)$}$P^{dr}_n(x)$ similarly is the right $U^{dr}_n$-torsor representing $\Isom^{\otimes}(\langle e^n_b\rangle,\langle e^n_x\rangle)$.  
\end{definition}

\begin{remark}
The group scheme $U^{dr}_n$ is a quotient of $U^{dr}$ in the following sense: let $G$ be a group scheme and define $Z^1G:=G$ and for $n \geq 1$ let $Z^{n+1}G:=[G,Z^nG]$. Then we have $U^{dr}_n = U^{dr}/Z^nU^{dr}$. 
\end{remark}

\begin{remark}
The $P^{dr}(x)$ fit together to form the \emph{canonical torsor} $P^{dr} \rightarrow X$ which is a right torsor for $X \times_{K_v} U^{dr}$ with fibre over $x\in X(K_v)$ being $P^{dr}(x)$. Similarly there is a canonical torsor $P^{dr}_n$ for $U^{dr}_n$. 
\end{remark}

We now elucidate the relationship between the universal connections of the previous section and the de Rham fundamental group through the following lemmas and propositions. We omit the proofs, but these can be found in \cite{kim09}. 

\begin{lemma} \label{Lemma - Hom and fibres}
There are functorial isomorphisms
\[ x^*\U_n \cong \Hom(e^n_b,e^n_x); \;\;x^*\U \cong \Hom(e_b,e_x). 
\]
\end{lemma}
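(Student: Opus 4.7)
The plan is to exploit the universal property of $(\U_n, 1)$ stated in Theorem \ref{Theorem - Kim - UC on General Affine Curve}, in the familiar style of a Yoneda-type argument. Given $w \in x^*\U_n$, I would define a natural transformation $\eta_w : e^n_b \to e^n_x$ as follows: for each pointed connection $(\V, v) \in \langle \Un_n(X_v) \rangle^*$, the universal property provides a \emph{unique} morphism $\phi_{(\V,v)} : \U_n \to \V$ of connections such that $(\phi_{(\V,v)})_b(1) = v$; then set
\[
\eta_w(\V)(v) := (\phi_{(\V,v)})_x(w) \in x^*\V = e^n_x(\V).
\]
Linearity of $v \mapsto \eta_w(\V)(v)$ and naturality in $\V$ follow directly from uniqueness in the universal property (since for any morphism $f : \V \to \W$ of connections, both $f \circ \phi_{(\V,v)}$ and $\phi_{(\W,f_b(v))}$ are morphisms $\U_n \to \W$ sending $1 \mapsto f_b(v)$, so they must agree). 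This produces a $K_v$-linear map
\[
\Phi : x^*\U_n \longrightarrow \Hom(e^n_b, e^n_x), \qquad w \mapsto \eta_w.
\]

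To show $\Phi$ is an isomorphism, I would construct an inverse $\Psi : \Hom(e^n_b, e^n_x) \to x^*\U_n$ by evaluating a natural transformation at the universal pointed object, namely $\Psi(\eta) := \eta(\U_n)(1)$. That $\Psi \circ \Phi = \mathrm{id}$ is immediate, because $\phi_{(\U_n, 1)} = \mathrm{id}_{\U_n}$ by uniqueness, so $\Phi(w)(\U_n)(1) = w$. For the other composition, given $\eta \in \Hom(e^n_b, e^n_x)$ and $(\V, v)$, naturality of $\eta$ applied to $\phi_{(\V,v)} : \U_n \to \V$ yields the commutative square
\[
\eta(\V) \circ (\phi_{(\V,v)})_b = (\phi_{(\V,v)})_x \circ \eta(\U_n).
\]
Evaluating at $1 \in b^*\U_n$ gives $\eta(\V)(v) = (\phi_{(\V,v)})_x(\eta(\U_n)(1)) = \eta_{\Psi(\eta)}(\V)(v)$, so $\Phi \circ \Psi = \mathrm{id}$ as well.

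Functoriality in $x$ is then immediate: for a morphism $x \to x'$ of $K_v$-points, the induced map $x^*\U_n \to (x')^*\U_n$ matches the map on $\Hom$ spaces coming from precomposition/evaluation naturality, again because both factor through the same unique universal morphisms $\phi_{(\V,v)}$. Finally, the pro-unipotent statement $x^*\U \cong \Hom(e_b, e_x)$ is obtained by passing to the projective limit over $n$, since $\U = \varprojlim \U_n$ and $\Hom(e_b, e_x) = \varprojlim \Hom(e^n_b, e^n_x)$ (any $\V \in \Un(X_v)$ lies in some $\Un_n(X_v)$, so a compatible system of natural transformations at each finite level assembles canonically into one on $\Un(X_v)$).

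I do not expect any serious obstacle here; the content of the lemma is essentially a repackaging of the universal property of $(\U_n, 1)$. The only point requiring mild care is being precise that $\Hom(e^n_b, e^n_x)$ really denotes natural transformations of fibre functors on the full Tannakian subcategory $\langle \Un_n(X_v) \rangle$ (rather than just on $\Un_n(X_v)$), but since $\U_n$ is universal and the constructions above are manifestly compatible with direct sums, subobjects and quotients — which generate $\langle \Un_n(X_v) \rangle$ from $\Un_n(X_v)$ — the bijection extends uniquely.
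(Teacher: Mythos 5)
The paper omits the proof of this lemma entirely, deferring to \cite{kim09}, and your Yoneda-style argument via the universal property of $(\U_n,1)$ — sending $w$ to $(\V,v)\mapsto(\phi_{(\V,v)})_x(w)$ and inverting by evaluation at $(\U_n,1)$ — is exactly the standard proof given there. It is correct, including the passage to the limit for $\U$; your closing caveat about $\Hom$ being taken on $\Un_n(X_v)$ versus the full Tannakian subcategory it generates is the right point to flag, and the intended reading here is the former, where your argument applies verbatim.
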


\begin{lemma}
Let $\U_n$ be as in Definition \ref{Definition - UC on General Affine Curve}. Then there is a unique morphism of connections \index{$\Delta$}$\Delta: \U_{n+m} \mapsto \U_n \otimes \U_m$ such that $\Delta(1)=1 \otimes 1$ and $\Delta(A_i)=A_i \otimes 1 + 1 \otimes A_i$. 
\end{lemma}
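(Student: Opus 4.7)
The plan is to establish existence by building $\Delta$ explicitly via the Hopf-algebra-style formula $A_i \mapsto A_i \otimes 1 + 1 \otimes A_i$, and then invoke the universal property of $\U_{n+m}$ (Theorem \ref{Theorem - Kim - UC on General Affine Curve}) for uniqueness. The first task is to confirm that $(\U_n \otimes \U_m, 1 \otimes 1)$ is an object of $\Un_{n+m}(X)^*$: equipping the tensor product with its canonical connection $\nabla_n \otimes \id + \id \otimes \nabla_m$, I would verify the unipotency bound $\leq n+m$ by combining the natural $I$-adic filtrations on $R_n$ and $R_m$ (whose induced filtrations on $\U_n$ and $\U_m$ are unipotent with trivial graded pieces) and re-indexing their tensor product by total degree to produce a filtration of length $n+m$ with trivial successive quotients.

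For existence, I would introduce the $K$-algebra homomorphism $\tilde\Delta \colon R \to R \otimes R$ on the free tensor algebra determined by $A_i \mapsto A_i \otimes 1 + 1 \otimes A_i$, and show that it descends to a map $R_{n+m} \to R_n \otimes R_m$. This is a short degree count: any word of length $n+m+1$ maps to a sum of tensors $A_I \otimes A_J$ with $|I| + |J| = n+m+1$, so every summand has $|I| \geq n+1$ or $|J| \geq m+1$, placing the image inside $I^{n+1} \otimes R + R \otimes I^{m+1}$. Tensoring $\tilde\Delta$ with $\id_{\O_X}$ then produces the $\O_X$-linear sheaf map $\Delta$ which by construction satisfies $\Delta(1) = 1 \otimes 1$ and $\Delta(A_i) = A_i \otimes 1 + 1 \otimes A_i$.

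The compatibility $(\Delta \otimes \id) \circ \nabla = \nabla \circ \Delta$ is then checked using the explicit connection formula $\nabla(f) = -\sum_i A_i f \otimes \alpha_i$. On a section $f \otimes g$ with $f \in R_{n+m}$ and $g \in \O_X$, the Leibniz rule expands the right-hand side into a piece involving $dg$ and a piece $-\sum_i A_i f \otimes g \otimes \alpha_i$; applying $\Delta$ and using the algebra-homomorphism property to rewrite $\tilde\Delta(A_i f) = (A_i \otimes 1 + 1 \otimes A_i) \tilde\Delta(f)$ matches exactly the expansion of $\nabla(\tilde\Delta(f) \otimes g)$ under the tensor-product connection. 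Once this is established, uniqueness follows from Theorem \ref{Theorem - Kim - UC on General Affine Curve} applied to the pointed connection $(\U_n \otimes \U_m, 1 \otimes 1)$: there is at most one morphism of pointed connections from $(\U_{n+m}, 1)$, and our $\Delta$ is one such.

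The main obstacle is the connection-compatibility verification in the third step. It is not conceptually difficult but demands careful bookkeeping with Leibniz terms and with the algebra-homomorphism property of $\tilde\Delta$; the descent of $\tilde\Delta$ to the truncated algebras and the unipotency-index bound on the tensor product are both routine by comparison, once one recognises that the $I$-adic filtration on $R_n$ is precisely the filtration witnessing unipotency of $\U_n$.
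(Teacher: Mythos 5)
Your proof is correct. The paper itself gives no proof of this lemma (it defers to \cite{kim09}), and your argument is essentially the standard one found there: define the coproduct on the free tensor algebra by $A_i \mapsto A_i \otimes 1 + 1 \otimes A_i$, check by the degree count that it descends to $R_{n+m} \to R_n \otimes R_m$, verify horizontality against the tensor-product connection using the algebra-homomorphism property, and deduce uniqueness from Theorem \ref{Theorem - Kim - UC on General Affine Curve} applied to the pointed connection $(\U_n \otimes \U_m, 1 \otimes 1)$, whose unipotency index is bounded by $n+m$ via the total-degree filtration. Note that your uniqueness argument actually gives slightly more than the statement requires: the condition $\Delta(1)=1\otimes 1$ alone already pins down $\Delta$, so the condition on the $A_i$ is a property to be verified of the unique pointed morphism rather than an extra constraint.
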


\begin{remark}
This morphism of connections $\Delta$ extends to $\U$ by taking the limits over all $m$ and $n$. This in turn makes $\U$ into a sheaf of co-commutative co-algebras. Recall from the previous lemma that there is a functorial isomorphism $\lim_{\leftarrow}R_n=x^*\U \cong \Hom(e_b,e_x)$. Following \cite[Section 1]{kim09} note that $b^*\U$ is the universal enveloping algebra of $\text{Lie}(U^{dr})$. This then is a co-commutative Hopf algebra and the co-product will be that induced by the map $\Delta$ of the previous lemma. 
\end{remark}

\begin{definition}
The \emph{group-like} elements of the Hopf algebra $b^*\U$ are the $g \in b^*\U$ such that $\Delta(g)=g \otimes g$. The \emph{primitive} elements are the $h \in b^*\U$ such that $\Delta(h)=h \otimes 1 + 1 \otimes h$.
\end{definition}

Using the functoriality of the isomorphism $x^*\U \cong \Hom(e_b,e_x)$ together with it's explicit description as in \cite{kim09} we deduce that $f \in x^*\U$ is group-like if and only if it belongs to $U^{dr}$. From this we may deduce the following result. 

\begin{proposition} \label{Proposition - Coordinate ring of Pdr}
The coordinate ring $\mathcal{P}^{dr}$ of \index{$P^{dr}$}$P^{dr}$ is the dual sheaf \index{$\mathcal{P}$}$\mathcal{P}=\U^{\vee}$. 
\end{proposition}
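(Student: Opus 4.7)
The plan is to prove the claimed isomorphism $\mathcal{P}^{dr} \cong \U^{\vee}$ first fibrewise on $X_v$, and then upgrade it to an isomorphism of sheaves of $\O_X$-algebras using functoriality in the basepoint.

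For each $x \in X(K_v)$, the natural isomorphism of Lemma \ref{Lemma - Hom and fibres} gives $x^*\U \cong \Hom(e_b,e_x)$, and the remark preceding the proposition identifies the group-like elements of this coalgebra with $P^{dr}(x) = \Isom^{\otimes}(e_b,e_x)$. Since $b^*\U$ is the universal enveloping algebra of the pro-nilpotent Lie algebra $\text{Lie}(U^{dr})$, the standard Hopf-algebra duality for pro-unipotent groups identifies $(b^*\U)^{\vee}$ with the coordinate ring $\O(U^{dr})$. The same argument, applied to the torsor $P^{dr}(x)$ in place of $U^{dr}$, identifies $(x^*\U)^{\vee} \cong \O(P^{dr}(x)) = x^*\mathcal{P}^{dr}$ functorially in $x$.

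To promote this to an isomorphism of sheaves, I would use that the coproduct $\Delta : \U \to \U \otimes \U$ makes $\U$ a sheaf of co-commutative $\O_X$-coalgebras, so that $\U^{\vee}$ inherits the structure of a commutative $\O_X$-algebra; meanwhile $\mathcal{P}^{dr}$ is a sheaf of commutative $\O_X$-algebras carrying a right $\O(U^{dr})$-coaction encoding the torsor structure. One then checks that the fibrewise identification is compatible with both the algebra product and the $U^{dr}$-coaction, both of which descend directly from $\Delta$.

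The main obstacle will be the careful handling of duality for the pro-object $\U$: the dual must be taken in an appropriate continuous (pro-finite) sense. Concretely I would work level-by-level, writing $\U = \lim_{\leftarrow} \U_n$ and $\U^{\vee} = \lim_{\rightarrow} \U_n^{\vee}$, use Theorem \ref{Theorem - Kim - UC on General Affine Curve} together with local freeness of each $\U_n$ to identify $\U_n^{\vee}$ with the coordinate ring of $P^{dr}_n$, and then pass to the limit using compatibility with the transition maps $U^{dr}_{n+1} \twoheadrightarrow U^{dr}_n$ and $P^{dr}_{n+1} \twoheadrightarrow P^{dr}_n$.
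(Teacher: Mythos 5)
Your proposal is correct and follows essentially the route the paper itself indicates: the paper omits the proof (deferring to Kim), but its preceding remarks deduce the proposition from exactly your key point, namely that under the functorial isomorphism $x^*\U \cong \Hom(e_b,e_x)$ the group-like elements for the coproduct $\Delta$ are precisely the tensor isomorphisms, so that $A$-points of $P^{dr}(x)$ correspond to algebra homomorphisms $(x^*\U)^{\vee}\to A$. Your level-by-level treatment of the duality for the pro-object, with $\U^{\vee}=\lim_{\rightarrow}\U_n^{\vee}$, is the right way to make this precise and matches the Eilenberg--Maclane filtration used immediately afterwards in the paper.
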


\begin{remark} \label{Remark - Exp and Log and Grouplike}
Following this, $U^{dr}$ is then identified with the group-like elements of $b^*\U$ and $\text{Lie}(U^{dr})$ is identified with the primitive elements. The exponential map
\[
\exp: g \mapsto \sum_{n=0}^{\infty} \frac{g^n}{n!}
\]
converges on the image of $\text{Lie}(U^{dr})$ in each $R_n=b^*\U_n$. It is simple to see that when $g$ is primitive and $\exp(g)$ converges then $\exp(g)$ is group-like. Similarly, there is a logarithm map 
\[
\log: g \mapsto \sum_{n=1}^{\infty} \frac{(1-g)^n}{n}
\]
which, when it converges, is inverse to the exponential map. Thus, for any $K_v$-algebra $A$ we have an isomorphism \[\exp: \text{Lie}(U^{dr})\otimes A \cong U^{dr}(A).\]  
\end{remark}

Using Proposition \ref{Proposition - Coordinate ring of Pdr} we can define a filtration on $P^{dr}$ which turns out to be a filtration by lengths of iterated integrals.

\begin{definition}Let $\mathcal{P}^{dr}_n:= \U_n^{\vee}$. The \emph{Eilenberg-Maclane} filtration on $P^{dr}=\Spec(\mathcal{P})$ is defined by

\[
\O_X \subset \mathcal{P}^{dr}_1 \subset  \mathcal{P}^{dr}_2  \subset .. \subset  \mathcal{P}^{dr}  = \U^{\vee}.
\]
\end{definition}

The projection $\U \twoheadrightarrow \U_n$ corresponds then to the projection $P^{dr} \twoheadrightarrow P^{dr}_n$. One may then consider the co-product $\Delta$ on the fibres $x^*\U_n$ and identify the group-like elements with those of $P^{dr}_n$. In \cite[Theorem E]{wojt93} Wojtkowiak demonstrates that $\mathcal{P}^{dr}$ possess a Hodge filtration $F^{\bullet}$:
\[
\mathcal{P}^{dr}=F^0\mathcal{P}^{dr} \supset ... \supset F^i\mathcal{P}^{dr} \supset ...
\]
by sub $\O_X$-modules where the $F^i\mathcal{P}^{dr}$ are all ideals. The filtration $F^{\bullet}\mathcal{P}^{dr}$ in turn induces a filtration on $P^{dr}$:

\begin{definition}
The \emph{Hodge filtration} \index{$F^{\bullet}P^{dr}$} is such that $F^iP^{dr}$ has defining ideal $F^{-i+1}\mathcal{P}^{dr}$. 
\end{definition}

In loc.cit. it is shown that with the induced filtration $F^{\bullet}\Udr$on $\Udr$ the space $F^0P^{dr}$ becomes an $F^0\Udr$ torsor and is hence trivialised over an $K_v$-algebra $Z$ as $F^0\Udr$ is unipotent.

Now let $Y$ be the reduction of $X_v$ over $k$. Let $\Un(Y)$ be the category of overconvergent unipotent isocrystals on $Y$. If we basechange to $K_v$, this will be identified with unipotent connections convergent on every residue disk on $X$, and overconvergent near points of $D_v=D \otimes K_v$. For $c \in Y(k)$, let $]c[$ denote the residue disk of $c$. Then there is a fibre functor $e_c: (\mathcal{V}, \nabla) \mapsto \mathcal{V}(]c[)^{\nabla = 0}$ which takes the horizontal sections of $\mathcal{V}$ on the residue disk of $c$. Tannaka duality then gives us a crystalline fundamental group \index{$U^{cr}$}$U^{cr}$, and a right-torsor of crystalline paths \index{$P^{cr}(y)$}$P^{cr}(y)$ for $y \in Y(k)$. We similarly obtain $U^{cr}_n, P^{cr}_n(y)$ with overconvergent isocrystals of unipotency index less than or equal to $n$. 

The $q=|k|$-power map on $\O_Y$ induces a Frobenius automorphism $\phi:P^{cr}_n(y) \simeq P^{cr}_n(y)$. By the comparison theorem of Chiarellotto (\cite{chiar99}) between de Rham and crystalline fundamental groups we obtain a Frobenius automorphism $\phi$ on $P^{dr}_n(x)$. Besser shows that this Frobenius automorphism satisfies the following property:

\begin{thm}[\cite{besser02}, Theorem 3.1] \label{Theorem - Frobenius Invariant Element}
The map $U_n^{dr} \rightarrow U_n^{dr}$ given by $g \mapsto \phi(g)g^{-1}$ is an isomorphism.
\end{thm}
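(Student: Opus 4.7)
The plan is to argue by d\'evissage along the lower central series of $U^{dr}$: the statement for $U_n^{dr}$ reduces, inductively on the nilpotency class, to the invertibility of $\phi - 1$ acting linearly on each abelian graded piece of $\mathrm{Lie}(U^{dr})$, which in turn follows from Weil-type weight bounds on $H^1_{dr}(X)$.

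More precisely, write $G_k$ for the successive nilpotent quotients $U^{dr}/Z^{k+1}U^{dr}$ (so $U^{dr}_n = G_k$ for the appropriate $k$), fitting into Frobenius-equivariant central extensions of $K_v$-group schemes
\[
1 \longrightarrow V_k \longrightarrow G_k \longrightarrow G_{k-1} \longrightarrow 1,
\]
with $V_k$ a finite-dimensional vector group. Working functorially on $R$-points for a $K_v$-algebra $R$, given $u \in G_k(R)$ I would reduce modulo $V_k$, apply the inductive hypothesis in $G_{k-1}(R)$ to produce a preimage $\bar{h}$, and lift it arbitrarily to $\tilde{h} \in G_k(R)$. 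Using that $V_k$ is central in $G_k$, for any $v \in V_k(R)$ (written additively) one computes
\[
\psi(\tilde{h}\cdot v) = \phi(\tilde{h})\phi(v)v^{-1}\tilde{h}^{-1} = \psi(\tilde{h})\cdot (\phi - 1)(v),
\]
so the fibre of $\psi$ over $u$ is a $(\phi-1)$-translate of a coset of $V_k(R)$. Both existence and uniqueness of a preimage of $u$ thus reduce to the bijectivity of $\phi - 1$ on $V_k$, and the same central computation packages into a scheme-theoretic inverse since $(\phi-1)^{-1}$ is a $K_v$-linear automorphism, hence algebraic.

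It remains to prove the key input: $\phi - 1$ is invertible on each $V_k$. For $X = C - D$ affine with $D$ nonempty, the Malcev Lie algebra $\mathrm{Lie}(U^{dr})$ is the completed free Lie algebra on $H^1_{dr}(X)^\vee$, consistent with the explicit free model of Definition \ref{Definition - UC on General Affine Curve}; thus $V_k$ embeds as a Frobenius-stable subspace of $(H^1_{dr}(X)^\vee)^{\otimes k}$. Under the Chiarellotto comparison cited above, $\phi$ matches the crystalline Frobenius on $H^1_{\mathrm{rig}}(Y)$, whose eigenvalues on $H^1_{dr}(X)$ are algebraic integers of Weil weights $1$ and $2$ (the weight-$1$ part coming from $H^1_{dr}(C)$ and the weight-$2$ part from the residues at $D$). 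After dualizing and taking $k$-fold tensor products, every eigenvalue of $\phi$ on $V_k$ has complex absolute value $q^{-j/2}$ for some integer $k \leq j \leq 2k$, in particular bounded strictly below $1$. Hence $1$ is not an eigenvalue of $\phi$ on $V_k$ and $\phi - 1$ is invertible.

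The main obstacle is this final step: securing the weight estimates requires the $p$-adic Weil bounds in rigid cohomology, functoriality of Frobenius under the de Rham/rigid comparison, and propagating these bounds through the graded pieces of the Malcev Lie algebra via the freeness that holds for affine curves. Once that input is in hand, the d\'evissage through the central extensions is essentially routine bookkeeping in the category of unipotent group schemes.
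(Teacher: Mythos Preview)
The paper does not give its own proof of this statement: it is quoted verbatim as Besser's Theorem 3.1 in \cite{besser02} and then used as a black box to deduce the existence of the unique Frobenius-invariant path. So there is no in-paper argument to compare against.

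That said, your sketch is essentially the standard proof (and indeed the one in \cite{besser02}): d\'evissage through the lower central series reduces to the linear statement that $\phi-1$ is invertible on each graded piece, and this follows from the Weil-type bounds on the crystalline Frobenius eigenvalues on $H^1$. The central-extension bookkeeping you wrote is correct. One small remark: you invoke freeness of $\mathrm{Lie}(U^{dr})$ to embed $V_k$ into $(H^1_{dr}(X)^\vee)^{\otimes k}$, which is valid here since $X$ is affine, but Besser's original statement and proof work for more general smooth varieties; there one argues directly that the graded pieces of the Malcev Lie algebra are pure of nonzero weight (being subquotients of tensor powers of $H^1$), without needing freeness. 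In the setting of this paper your shortcut is fine.
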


As a consequence of this Besser shows in \cite[Corollary 3.2]{besser02} that one may deduce that for any $x \in X_v(K_v)$ there is a unique Frobenius-invariant de Rham path \index{$p_n^{cr}(x)$}$p_n^{cr}(x)$ from $b$ to $x$. Both existence and uniqueness follow in a fairly straightforward manner from the above theorem. We are now ready to define the de Rham period map. In \cite{kim09} Kim defines the notion of an \textit{admissible $\Udr$-torsor}.

Let $T= \Spec(\mathcal{T})$ be a right $U_n^{dr}$-torsor over a $K_v$-scheme $Z$. We say that a $\Udr$-torsor $T$ is admissible if it has an Eilenberg-Maclane filtration; a Hodge filtration such that $F^0T$ is trivialised over $Z$; it has a Frobenius morphsim of $Z$-schemes $\phi$ semilinear with respect to the $\Udr$-action and preserving the Eilenberg-Maclane filtration; $\phi$ has a unique invariant $Z$-point; and, there is a universal injectivity property on the filtrations. 

\begin{remark}
Given our previous results observe then that $P^{dr}$ and $P^{dr}_n$ are admissible torsors for $U^{dr}$ and $U^{dr}_n$ respectively. 
\end{remark}

Let $T$ be an admissible torsor over a $K_v$-algebra $L$. Then as it is a right torsor, there is a $u_T \in U^{dr}_n$ such that $p^{cr}_T=p^H_Tu_T$. The point $u_T$ will be unique up to multiplication on the left by $F^0U^{dr}_n$, and so we have a $[u_T] \in F^0\U^{dr}_n \setminus U^{dr}_n$. There leads to the following bijective correspondence.

\begin{proposition}[Kim, \cite{kim09} Proposition 1] \label{Proposition - Classifying Space Good Redn}
There is a natural bijection between $F^0U_n^{dr} \setminus U_n^{dr}$ (resp. $F^0U^{dr}\setminus U^{dr}$) and isomorphism classes of admissble $U_n^{dr}$-torsors (resp. admissible $U^{dr}$-torsors) given by the map 
\[
T \mapsto [u_T].
\]
\end{proposition}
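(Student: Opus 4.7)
The plan is to produce the map $T\mapsto[u_T]$ from two canonical points of an admissible torsor, then verify well-definedness on isomorphism classes, injectivity, and surjectivity in turn. The whole argument rests on the two rigidifications available to an admissible torsor: a unique Frobenius-invariant point coming from Theorem~\ref{Theorem - Frobenius Invariant Element}, and a Hodge-theoretic point coming from the trivialization of $F^0T$.

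Given an admissible $U^{dr}_n$-torsor $T$ over a $K_v$-algebra $L$, I would first extract $p^{cr}_T\in T(L)$ as the unique Frobenius-invariant point, whose existence and uniqueness follow from Theorem~\ref{Theorem - Frobenius Invariant Element} in the same way Besser deduced the existence of $p^{cr}_n(x)$. Next I would pick a trivialization of the $F^0U^{dr}_n$-torsor $F^0T$ to obtain a point $p^H_T\in T(L)$; since any two trivializations differ by the left action of $F^0U^{dr}_n(L)$, the point $p^H_T$ is canonical only up to this action. As $T$ is a right $U^{dr}_n$-torsor, there is a unique $u_T\in U^{dr}_n(L)$ with $p^{cr}_T=p^H_T\cdot u_T$, and the coset $[u_T]\in F^0U^{dr}_n\setminus U^{dr}_n(L)$ is independent of all choices. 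To see the class only depends on the isomorphism class of $T$, note that any isomorphism $T\xrightarrow{\sim}T'$ of admissible torsors must preserve Frobenius (so $p^{cr}_T\mapsto p^{cr}_{T'}$ by uniqueness) and must send trivializations of $F^0T$ to trivializations of $F^0T'$, which gives $[u_T]=[u_{T'}]$.

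For injectivity, I would use $p^H_T$ to trivialize the underlying torsor via $g\mapsto p^H_T\cdot g$, pushing the Hodge filtration on $T$ to the standard filtration on $U^{dr}_n$ and sending the element $u_T$ to $p^{cr}_T$. If $[u_T]=[u_{T'}]$, then after modifying $p^H_{T'}$ by a suitable element of $F^0U^{dr}_n(L)$ we may arrange $u_T=u_{T'}$ on the nose; composing the trivializations yields a $U^{dr}_n$-equivariant isomorphism $T\to T'$ which tautologically preserves the Hodge filtration and sends $p^{cr}_T\mapsto p^{cr}_{T'}$, hence intertwines the two Frobenius structures by semilinearity.

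For surjectivity, given a representative $u\in U^{dr}_n(L)$, I would build $T$ by taking $U^{dr}_n$ itself as the underlying right torsor, endowed with the standard Eilenberg--Maclane and Hodge filtrations, and defining a semilinear Frobenius $\phi_T$ as the unique one whose invariant point is $u$, concretely $\phi_T(g)=u\cdot\phi(u^{-1}g)$ where $\phi$ is the canonical Frobenius on $U^{dr}_n$. Replacing $u$ by an element in the same coset produces an isomorphic admissible torsor by the injectivity argument, and then $[u_T]=[u]$ by construction. The main obstacle I expect is verifying that this construction genuinely yields an admissible torsor in Kim's sense: preservation of the Eilenberg--Maclane filtration by $\phi_T$, the semilinearity compatibility, and especially the universal injectivity hypothesis on the filtrations all require some bookkeeping. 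However, once $\phi_T$ is written in the explicit form above, each of these conditions essentially reduces to the corresponding known statement for the canonical $(\phi,F^\bullet)$ on $U^{dr}_n$, so the remaining work is formal rather than conceptual.
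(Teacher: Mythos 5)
Your proposal is correct and is essentially the standard argument for Kim's Proposition 1, which the paper itself states without proof (it only sets up the map $T\mapsto[u_T]$ and defers to \cite{kim09}); your construction of the map, the injectivity argument via the two trivializations, and the surjectivity argument via twisting the Frobenius on $U^{dr}_n$ by $u$ all match that approach. One small correction: for a general admissible torsor the existence and uniqueness of the Frobenius-invariant point $p^{cr}_T$ is an axiom of admissibility rather than a consequence of Theorem \ref{Theorem - Frobenius Invariant Element} (that theorem is what you need in the surjectivity step, to see that $g\mapsto u\cdot\phi(u^{-1}g)$ has the unique fixed point $u$, and it is what establishes admissibility of the path torsors $P^{dr}_n(x)$ in the first place).
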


We are now in a position to define the de Rham period map, which is the de Rham realisation of the unipotent Albanese map. Note here that we will write $[P^{dr}(x)]$ as the image of $P^{dr}(x)$ under the map from the preceding proposition rather than $[u_{P^{dr}(x)}]$.

\begin{definition}
The de Rham period maps \index{$j^{dr},j^{dr}_n$}$j^{dr}, j^{dr}_n$ are defined as follows:
\begin{align*}
j^{dr}:& X_v(K_v) \rightarrow F^0U^{dr} \setminus U^{dr} \\
&x \mapsto [P^{dr}(x)]\\
& \\
j^{dr}_n: &X_v(K_v) \rightarrow F^0U^{dr} \setminus U^{dr} \twoheadrightarrow F^0U_n^{dr} \setminus U_n^{dr}\\
& x \mapsto [P^{dr}(x)] \mapsto [P^{dr}_n(x)]
\end{align*}
\end{definition}

In applications to Diophantine problems it is the finite level maps $j^{dr}_n$ that we are primarily interested in. Thus our aim should be to find explicit representatives for $[P^{dr}_n(x)]$ in $F^0U_n^{dr} \setminus U_n^{dr}$ for arbitrary $x \in X_v(K_v)$. To do this we need to find a Frobenius invariant $p_n^{cr}(x)\in P^{dr}_n(x)$, a trivialisation \index{$p_n^H(x)$}$p_n^H(x)\in F^0P^{dr}_n(x)$ and \index{$u_n(x)$}$u_n(x)\in U^{dr}_n$ such that $p_n^{cr}(x)=p_n^H(x)u_n(x)$. Then we can take $[P^{dr}_n(x)]=[u_n(x)]$. The element $p^{cr}_n(x)$ is computed as the parallel transport of $1 \in b^*\U_n$ to the fibre $x^*\U_n$. 

\begin{lemma}\label{Lemma - phi invariant path}[\cite{kim09}, \S 1]
The Frobenius invariant path \index{$p_n^{cr}(x)$}$p^{cr}_n(x)$ in $P^{dr}_n(x)$ is given by 

\[
p_n^{cr}(x) = 1 + \sum_{|w| \leq n} \int_b^x \alpha_w w
\]
where $\alpha_w = \alpha_{i_1}\alpha_{i_2}...\alpha_{i_s}$ if $w=A_{i_1}A_{i_2}...A_{i_s}$ and the sum is taken over all words in $A_0,..,A_{2g-r+2}$ of length at most $n$. 
\end{lemma}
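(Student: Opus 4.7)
The plan is to reduce the statement to an explicit computation of Coleman parallel transport on the universal connection. Via Lemma \ref{Lemma - Hom and fibres}, the fibre $x^*\U_n$ is identified with $\Hom(e_b^n,e_x^n)$, and a de Rham path in $P^{dr}_n(x)$ corresponds to a group-like element of $x^*\U_n \cong R_n$. Such a group-like element is obtained by parallel-transporting $1\in b^*\U_n$ along a horizontal section of $\U_n$ and evaluating at $x$. Therefore the lemma amounts to two things: first, that $p$-adic parallel transport against $\nabla_n$ produces the claimed Frobenius-invariant element; and second, that the coefficients of this transport in the basis of words are precisely the iterated Coleman integrals $\int_b^x \alpha_w$.

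First I would argue Frobenius invariance. Coleman's theory provides, on each residue disk and extended by analytic continuation, a canonical parallel transport of unipotent connections which is equivariant with respect to the Frobenius structure on the overconvergent isocrystal underlying $\U_n$ (this is the point of view of Besser in \cite{besser02}). The resulting $\phi$-equivariant transport yields a $\phi$-invariant element of $P^{dr}_n(x)$ when applied to $1\in b^*\U_n$, since $1$ itself is Frobenius-invariant in the fibre at $b$. By Besser's uniqueness statement (Theorem \ref{Theorem - Frobenius Invariant Element} and its corollary), this invariant element is the unique Frobenius-invariant path $p^{cr}_n(x)$.

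Next I would carry out the explicit computation. Write a horizontal section of $\U_n$ in the form
\[
s(z) \;=\; \sum_{|w|\leq n} c_w(z)\, w,
\]
where the $c_w$ are Coleman analytic functions. Since $w\in R_n$ is a constant section, the connection from Definition \ref{Definition - UC on General Affine Curve} gives $\nabla(w) = -\sum_i A_i w \otimes \alpha_i$. The condition $\nabla(s)=0$ therefore becomes
\[
\sum_{v} dc_v \, v \;=\; \sum_{w,i} c_w \, (A_iw) \otimes \alpha_i,
\]
and reading off the coefficient of each word $v = A_i w$ of positive length yields the recursive system
\[
dc_{A_iw} \;=\; c_w\,\alpha_i, \qquad c_\emptyset \equiv 1,\quad c_w(b)=0 \text{ for } |w|\geq 1.
\]
Iterating this system by integration from $b$ to $x$ gives $c_w(x) = \int_b^x \alpha_w$, and summing yields the stated expression for $p^{cr}_n(x)$.

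The main obstacle is really the first part — justifying that Coleman parallel transport produces a Frobenius-invariant element. The calculation in the second step is formal manipulation of the recursion for iterated integrals. One has to be careful about the convention for iterated integration (the order in which the one-forms appear) so that it matches the ordering of the letters in the word $w$; with the sign convention $\nabla(w)=-\sum_i A_i w\otimes\alpha_i$ and the recursion $dc_{A_iw}=c_w\alpha_i$, the leftmost letter of $w$ corresponds to the outermost integration, giving the iterated integral $\int_b^x \alpha_{i_1}\alpha_{i_2}\cdots\alpha_{i_s}$ as stated.
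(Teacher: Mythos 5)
Your proposal is correct and is essentially the standard argument: the paper states this lemma without proof, citing \cite{kim09} and relying on Besser's Tannakian description of Coleman integration \cite{besser02}, and your two steps (Frobenius-equivariance of Coleman transport plus uniqueness from Theorem \ref{Theorem - Frobenius Invariant Element}, then the recursion $dc_{A_iw}=c_w\alpha_i$ with $c_\emptyset\equiv 1$, $c_w(b)=0$) reconstruct exactly that argument, with the integration convention matching the paper's definition of $\int_x^y\omega_1\cdots\omega_r$. The only point left implicit is that the resulting element of $x^*\U_n\cong\Hom(e_b^n,e_x^n)$ is group-like, hence genuinely lies in $P^{dr}_n(x)$; this follows either from the shuffle relations (\ref{Iterated Integrals Property 1}) satisfied by the coefficients $\int_b^x\alpha_w$ or from compatibility of parallel transport with the coproduct $\Delta$.
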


\begin{remark}
In the above lemma the iterated integrals appearing are iterated Coleman integrals defined by \index{$\int_x^y \omega_1..\omega_r$}

\[
\int_x^y \omega_1..\omega_r := \int_x^y \omega_1(t_1) \int_x^{t_1} \omega_2(t_2)...\int_x^{t_{r-1}}\omega_{r-1}(t_{r-1}) \int_x^{t_r}\omega_r(t_r).
\]

Balakrishnan has developed algorithms for computing iterated Coleman integrals of this type on elliptic and hyperelliptic curves (see  \cite{balakrishnan13},\cite{balakrishnan10}) which have seen applications to Kim's non-abelian Chabauty. 
\end{remark}

Determining $p_n^H(x)$ will requires us to be able to compute the Hodge filtration on $P^{dr}_n$. The filtration $F^{\bullet}\mathcal{P}^{dr}$ induces a filtration on the dual $\U$ and each quotient $\U_n$. This will give each $\U_n$ the structure of a filtered connection and by computing this filtration we may identify the filtration on $P^{dr}$ and $P^{dr}_n$. In what follows we spend some time showing how this may be explicitly calculated in the case of elliptic curves and odd hyperelliptic curves.

\section{Logarithmic extensions of unipotent connections}\label{Section 3}
In the introduction we noted that in order to compute the Hodge filtration on the universal pointed connection of an affine curve we will need to make use of an approach due to Hadian in \cite{hadian11}. This will require us to compute a universal projective system of logarithmic connections on the compact curve extending the universal projective system of connections on the affine curve. This section is concerned with presenting a computational method to do this in the case that we have elliptic or odd hyperelliptic curves.  

Let $C,D$ and $X$ be as in Definition \ref{Definition - UC on General Affine Curve} and let \index{$\Omega^1_C(D)$}$\Omega^1_C(D)$ be the sheaf of \textit{logarithmic differentials} on $C$ along $D$. This sheaf will consist of differentials on $C$ regular on $X$ and with at worst logarithmic poles along $D$.

\begin{definition}
A \textit{logarithmic connection} on $C$ with logarithmic poles along $D$ is a vector bundle $\mathcal{V}$ equipped with a $K$-linear morphism of sheaves \index{$\nabla$}$\nabla$ such that

\[
\nabla : \mathcal{V} \rightarrow \mathcal{V} \otimes \Omega^1_C(D)
\]

satisfying the Leibniz condition as in Definition \ref{definition - connection}. A trivial logarithmic connection on $C$ along $D$ is a direct sum of copies of $(\O_C,d)$ where $\O_C$ is the structue sheaf of $C$ and $d$ is its exterior derivative. A morphism of logarithmic connections is defined analogously to morphisms of connections. 

For any open $Y \subset C$ a logarithmic connection on $C$ with poles along $D$ is a vector bundle $\mathcal{W}$ equipped with a $K$-linear morphism of sheaves 
\[
\nabla : \mathcal{V} \rightarrow \mathcal{V} \otimes \Omega^1_Y(D \cap Y).
\]
\end{definition}

\begin{definition}
Let $\mathcal{V}$ be a connection on $X=C-D$. A logarithmic extension of $\mathcal{V}$ to $C$ along $D$ is a logarithmic connection $\overline{\mathcal{V}}$ on $C$ with logarithmic poles along $D$ such that $\overline{\mathcal{V}}|_X \cong \mathcal{V}$. 
\end{definition}

That such extensions exist is a consequence of a theorem of Deligne (\cite[Proposition 5.2]{del70}). In this section we provide some algorithms to compute such extensions when $C$ is either an elliptic or odd hyperelliptic curve and $D$ consists of the point at infinity. 

In \cite{hadian11} Hadian defines unipotent logarithmic connections, which are iterated extensions of trivial logarithmic connections $(P \otimes _K \O_C, \id_P \otimes_K d)$. Here $P$ is a finite dimensional $K$-vector space and $d: \O_C \rightarrow \Omega^1_C(D)$ is the usual exterior derivative. The logarithmic extension \index{$\UU$}$\UU$ of the universal connection $\U$ on the affine curve $X$ is then used as a model for $\U$. Lemma 3.6 in loc.cit. suggests a practical way to compute the Hodge filtration of $\U$ by making use of this logarithmic extension $\UU$ and we will employ this approach in Section \ref{Section 4}. We now turn our attention to the computation of the logarithmic extension of $\U$. 

\subsection{A general algorithm for logarithmic extensions}
Recall that $\U$ consists of a projective system of pointed unipotent connections $\lbrace (\U_n,u_n) \rbrace$. We compute the logarithmic extension of $\U$ as a projective system \index{$\lbrace (\UU_n,u_n)\rbrace$}$\UU = \lbrace (\UU_n,u_n)\rbrace$ where each \index{$\UU_n$}$\UU_n$ is a logarithmic extension of $\U_n$. We shall see later that this construction will ensure that the resulting projective system is a universal projective system among pointed unipotent logarithmic connections. In order to compute logarithmic extensions $\UU_n$ of the $\U_n$ we utilise a description in terms of an open covering of $C$. That is, an object is described on open subsets of some cover together with gluing morphisms (\textit{descent datum}) which satisfy some cocycle condition.\\

\begin{definition}
Descent for logarithmic connections on $C$ along $D$ is given by the following descent datum: 

\begin{enumerate} 
\item An open cover $(Y^i)_i$ of $C$
\item Logarithmic connections $\V_i=(\O^r_{Y^i},\nabla_i)$ with poles along $Y^i \cap D$
\item Isomorphisms of logarithmic connections $ G_{ij}: (\O_{Y^{ij}}^r,\nabla_i|^{Y_{ij}}) \xrightarrow{\sim} (\O_{Y^{ij}}^r,\nabla_j|_{Y^{ij}})$ such that for all $i$ we have $G_{ii}=\text{id}_{(\O_{Y^{i}}^r,\nabla_i)}$ and such that for all $i,j,k$ the following cocycle condition is satisfied:\[
(G_{jk}|_{Y^{ijk}})\circ (G_{ij}|_{Y^{ijk}})= (G_{ik}|_{Y^{ijk}})
\]
\end{enumerate} 

This descent datum will be written as $(\V_i,G_{ij})$.
\end{definition}

To construct the descent datum we will need the following lemma which follows from an easy computation. 

\begin{lemma} \label{definition - gauge transformations}
Given a (logarithmic) connection $\mathcal{V}$ on a curve $Z$, suppose that with respect to the local basis $(e_i)$ it has connection matrix $\Omega$. If $G$ is an automorphism of $\mathcal{V}$, the transformation 

\[
\Omega \mapsto G^{-1} dG + G^{-1} \Omega G
\]is called a \textit{gauge transformation} of $\Omega$. This is the connection matrix of $\mathcal{V}$ with respect to the local basis $(G^{-1}e_i)$.  
\end{lemma}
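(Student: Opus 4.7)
The lemma is essentially a direct local computation carried out in any trivialising neighbourhood, so the plan is to expand $\nabla$ applied to a fixed section in both frames and compare the resulting expressions.

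First I would fix a trivialising open $U$, write the original frame as a row $e=(e_1,\ldots,e_n)$ with connection matrix $\Omega$, so that $\nabla(e\cdot f) = e(df + \Omega f)$ for a column $f$ of coefficients (matching the paper's conventions). Let $e'$ be the new frame obtained by applying $G^{-1}$ to each $e_i$, and write an arbitrary section as $s = e\cdot f = e'\cdot g$. Because $G$ is an automorphism of the underlying vector bundle $\mathcal{V}$ (not of the connection, just of the bundle), passing between $f$ and $g$ is just an invertible $\O_Z(U)$-linear change of coordinates, and the convention $e'_i = G^{-1}e_i$ forces the coefficients to transform by $G$ on the appropriate side.

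Substituting that coordinate change into $\nabla(ef) = e(df + \Omega f)$ and applying the Leibniz rule produces three contributions: a term $G^{-1}dG$ coming from differentiating the basis-change matrix, the term $dg$ from the leading exterior derivative, and a term $G^{-1}\Omega G$ from conjugating the original connection matrix into the new frame. The key cancellation is the identity $d(GG^{-1})=0$, equivalently $dG^{-1} = -G^{-1}(dG)G^{-1}$; substituting this collapses all $dG^{-1}$-terms and, after a short rearrangement, yields the expression $e'(dg + \Omega' g)$ with $\Omega' = G^{-1}dG + G^{-1}\Omega G$, as claimed.

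The logarithmic version requires no real modification. Since $G$ is an automorphism of the bundle $\mathcal{V}$, its entries lie in $\O_C$ on the chosen trivialisation, so $dG$ has entries in $\Omega^1_C(D)$; consequently both $G^{-1}\Omega G$ and $G^{-1}dG$ remain sections of $\mathfrak{gl}_n \otimes \Omega^1_C(D)$, confirming that $\Omega'$ is the connection matrix of a logarithmic connection with poles along $D$. There is no genuine obstacle here: the lemma is a bookkeeping statement whose only delicate point is fixing matrix conventions (row versus column, left versus right action of $G$ on the frame) consistently with those in the paper, which I would pin down at the outset before running the substitution.
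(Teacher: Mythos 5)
Your proof is correct and is exactly the "easy computation" the paper alludes to without writing out: change frame, apply the Leibniz rule, and use $dG^{-1}=-G^{-1}(dG)G^{-1}$ to collapse the terms into $G^{-1}dG+G^{-1}\Omega G$. Your remark about pinning down the row/column and left/right conventions is apt, since the paper's phrasing of the new frame as $(G^{-1}e_i)$ versus the formula it states is precisely where such a bookkeeping slip could occur.
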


We now turn our attention towards computing the logarithmic extensions of the connections $\U_n$. We want an iterative algorithm by which we may compute the logarithmic extensions $\UU_n$ of the $\U_n$ successively. Observe that $\UU_0 = (\O_C,d)$ is a logarithmic extension of $\U_0=(\O_X,d)$ and we take this as our base case. The construction of the extensions $\UU_n$ is based upon the following observation in \cite{hadian11}:

\begin{proposition}[\cite{hadian11}, Lemma 2.3 \& Proposition 2.6]\label{Proposition - Universal Logarithmic Connection}
For every $n \geq 0$ there exists an extension $\UU_{n+1}$ of $\UU_n$ by $(V_{dr}^{\otimes(n+1)} \otimes \O_C,d)$ such that $1 \in b^* \UU_{n+1}$ maps to $1 \in b^*\UU_n$ under projection. Let $\mathcal{V}$ be a unipotent logarithmic connection on $C$ with poles along $D$ of unipotency index $m$. Then for all $v \in b^*\mathcal{V}$ and $n \geq m$ there exists a unique morphism $\phi_v: \UU_n \rightarrow \mathcal{V}$ and $1 (\in b^*\UU_n) \mapsto v \in (b^*\mathcal{V})$. 
\end{proposition}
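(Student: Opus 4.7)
The plan is to prove both claims by induction on $n$, with base case $\UU_0 = (\O_C, d)$ and $u_0 = 1$, which is trivially universal among trivial logarithmic connections. The central tool I would use is Deligne's canonical extension theorem (\cite[Proposition 5.2]{del70}), which provides an equivalence of categories between unipotent flat connections on $X$ and unipotent logarithmic connections on $(C,D)$ with nilpotent residues along $D$. Because every unipotent connection on $X$ is iteratively extended from trivial ones (with trivial monodromy), its monodromy is automatically unipotent and so it lies in the scope of Deligne's theorem; conversely, iterated extensions of $(\O_C, d)$ automatically have nilpotent residues. Under this equivalence $\U_n$ corresponds to $\UU_n$, and extensions on one side correspond to extensions on the other.

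For existence, the universal affine extension $0 \to V_{dr}^{\otimes(n+1)} \otimes \O_X \to \U_{n+1} \to \U_n \to 0$ from Definition \ref{Definition - UC on General Affine Curve} corresponds under the equivalence to a unique logarithmic extension $0 \to V_{dr}^{\otimes(n+1)} \otimes \O_C \to \UU_{n+1} \to \UU_n \to 0$, which we take as $\UU_{n+1}$. Since $b \in X(K)$, the canonical identification $b^*\UU_k = b^*\U_k$ means that the condition $1 \in b^*\UU_{n+1} \mapsto 1 \in b^*\UU_n$ is inherited directly from the corresponding property on $X$.

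For universality, given a unipotent logarithmic connection $\mathcal{V}$ of index $m$ and an element $v \in b^*\mathcal{V}$, I would restrict to $X$ and apply Theorem \ref{Theorem - Kim - UC on General Affine Curve} for $n \geq m$ to obtain a unique morphism $\phi_v^X : \U_n \to \mathcal{V}|_X$ with $1 \mapsto v$. Functoriality of Deligne's equivalence then extends $\phi_v^X$ uniquely to a morphism $\phi_v : \UU_n \to \mathcal{V}$ of logarithmic connections on $C$, establishing both existence and uniqueness simultaneously.

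The hardest part will be justifying that Deligne's equivalence applies cleanly in this iterated setting: specifically, that the nilpotent residue condition is preserved under extensions of unipotent objects, and that morphisms of unipotent connections on $X$ lift uniquely to morphisms between the canonical logarithmic extensions on $C$. Both claims follow by induction on the unipotency index, exploiting that the statements are transparent on the trivial graded pieces (where the logarithmic extension is unique and morphisms are simply $K$-linear) and propagating through the unipotent filtration via snake-lemma reasoning. Once this is settled, the Proposition reduces to the universal property of the affine object $\U_n$ together with the basepoint normalization, both of which are already known.
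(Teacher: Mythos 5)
Your argument is essentially correct, but it is not the route taken by the source the paper cites for this statement (the paper gives no proof of its own; it imports the result from Hadian, Lemma 2.3 and Proposition 2.6). Hadian works entirely inside the category of unipotent logarithmic connections on $(C,D)$: he identifies $\Hom$ and $\Ext^1$ between trivial logarithmic connections with $H^0$ and $H^1$ of the logarithmic de Rham complex $\O_C \to \Omega^1_C(D)$, i.e.\ with $H^0_{dr}(X)$ and $H^1_{dr}(X)$, and then runs the same inductive construction and universality argument that Kim uses for $\U_n$ on the affine curve, but directly for the $\UU_n$. You instead build $\UU_{n+1}$ as the Deligne canonical extension of $\U_{n+1}$ and transport the universal property across the restriction functor. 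This works: for unipotent objects all residues are nilpotent, the canonical extension is exact (so it carries the extension $0 \to V_{dr}^{\otimes(n+1)}\otimes\O_X \to \U_{n+1} \to \U_n \to 0$ to the required logarithmic extension), and restriction to $X$ is fully faithful on logarithmic connections with nilpotent residues, which gives both existence and uniqueness of $\phi_v$ from Theorem \ref{Theorem - Kim - UC on General Affine Curve}. The one step you should make explicit rather than leave implicit is that an arbitrary unipotent logarithmic connection $\mathcal{V}$ on $C$ \emph{is} the canonical extension of $\mathcal{V}|_X$ (this follows from the uniqueness clause of Deligne's theorem once one notes that iterated extensions of $(\O_C,d)$ have nilpotent residues); without it the full-faithfulness argument does not apply to the target. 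The trade-off between the two proofs: yours is shorter and conceptually transparent but outsources the real content to Deligne's equivalence (whose exactness and full faithfulness are exactly the inductive, graded-piece arguments you defer to the end), whereas Hadian's is self-contained in the logarithmic category and aligns with what the rest of Section \ref{Section 3} of the paper actually does, namely realize these extension classes explicitly by gauge transformations and descent data.
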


In the parlance of Definition \ref{Definition - Universal Projective System} we then say that $\lbrace(\UU_n, 1) \rbrace$ forms a universal projective system in the category of pointed unipotent logarithmic connections on $C$ with logarithmic poles along $D$ and we denote this projective system by $\UU$. By an abuse of notation we will also denote the universal pro-unipotent logarithmic connection $\lim_{\leftarrow} \UU_n$ by $\UU$ when the context is clear. In light of Proposition \ref{Proposition - Universal Logarithmic Connection} we will construct finite level extensions $\UU_n$ as extensions fitting into an exact sequence
\begin{equation} \label{Equation - Exact Sequence original}
0 \rightarrow V_{dr}^{\otimes (n+1)} \otimes \O_C \rightarrow \UU_{n+1} \rightarrow \UU_{n} \rightarrow 0
\end{equation} of logarithmic connections. For each $n$ we shall, therefore, require a suitable projection map $\UU_n \twoheadrightarrow \UU_{n-1}$. To incorporate this into the construction we need to define we what we mean by morphisms of descent data. In the following definition we assume that both descent data are described over a single covering of $C$.

\begin{definition}
A morphism of descent data $(\V_i,G_{ij})$ and $(\V_i',G'_{ij})$ for a logarithmic connection on $C$ along $D$ is given by a family $\rho=(\rho_i)_{i}$ of morphisms of logarithmic connections $\rho_i:\V_i\rightarrow \V_i'$ such that all of the  diagrams 

\[
\begin{tikzcd}
\V_i|_{Y^{ij}} \arrow[swap]{d}{\rho_i|_{Y^{ij}}} \arrow{r}{G_{ij}} & \V_j|_{Y^{ij}} \arrow{d}{\rho_j|_{Y^{ij}}}\\
\V'_i|_{Y^{ij}} \arrow{r}{G_{ij}'} & \V'_j|_{Y^{ij}}
\end{tikzcd}
\]  commute. 
\end{definition}

We compute gauge transformations $G$ which map the connection matrix of $\U_{n+1}$ over $X$ to a connection matrix with at worst logarithmic poles at the  points of $D$. The gauge transformations will be the transition functions for the descent data. We then determining a suitable open cover $(Y_{n+1}^i)_i$ of $C$ such that these connection matrices define logarithmic connections $\U^i_{n+1}$ on each of the patches $Y^{n+1}_i$. Over the patch $Y^{n+1}_i$ we simply define the extension to have bundle $R_{n+1} \otimes \O_{Y_{n+1}^i}$. This is done subject to the condition that for each candidate open patch $Y_{n+1}^i$ we should have  a commutative diagram

\begin{equation}\label{Equation - descent projection}
\begin{tikzcd}
\U_n|_{X \cap Y_{n+1}^i} \arrow[twoheadrightarrow]{d} \arrow{r}{G} & \U^i_n|_{X \cap Y_{n+1}^i} \arrow[twoheadrightarrow]{d}\\
\U_{n-1}|_{X \cap Y_{n+1}^i} \arrow{r}{G} & \U^i_{n-1}|_{X \cap Y_{n+1}^i}
\end{tikzcd}
\end{equation} Then the logarithmic connections on the $Y_{n+1}^i$ with poles along  together with the connection $\U_n$ on $X$ and the gauge transformations define descent datum for a logarithmic connection on $C$ with log poles along $D$. Note that it will be convenient for the computations that follow to describe the gauge transformations $G$ as elements of $K(C)\otimes_K \mathfrak{gl}_N$ for some $N$. 

\begin{algorithm} \label{Algorithm - Computing GT on UC}
(Computing the logarithmic extension of the universal connection on $X=C-D$) \newline

\textbf{Input}
\begin{itemize}
\item A smooth projective curve $C$ over a field $K$ of characteristic $0$, non-empty divisor $D = \lbrace d_1,...,d_r \rbrace $ defined over $K$, $X=C-D$
\item The universal connection $\U_n$ on $X=Y^0_n=Y^0_{n+1}$ with respect to a basis of $H^1_{dr}(X)$
\item The logarithmic extension $\UU_n$ of $\U_n$ defined by the following descent datum:

\begin{enumerate}
\item Trivial logarithmic connections  $(\U^i_n,d+C^i_n)$ over open subsets $Y^i_n \subset C$ where $d_i \in Y^i_n$ for $i \neq 0$ and $(Y^i_n)_i$ a cover of $C$
\item Gauge transformations $G^{ij}_{n}$ with:

\begin{itemize}
\item $G^{ij}_{n}:C^i_{n} \mapsto C^j_{n}$ on $Y^{ij}_n=Y^j_{n} \cap Y^i_{n}$ compatible with projections to level $n-1$ for all $i,j$
\item $G^{ij}_{n}=(G^{ji}_{n})^{-1}$ on $Y^{ij}_{n}$ for all $i,j$
\item $G^{ii}_{n}=\text{id}$ on $Y^{ii}_{n}$ for all $i$
\item $G^{jk}_{n} \circ G^{ij}_{n}=G^{ik}_{n}$ on $Y^{ijk}_{n}=Y^{i}_{n} \cap Y^j_{n} \cap Y^k_{n}$ for all $i,j,k$ (cocycle condition)
\end{itemize}
\end{enumerate}  
\end{itemize}

\textbf{Output}

\begin{itemize}
\item The logarithmic extension $\UU_{n+1}$ of $\U_{n+1}$ defined by the following descent datum:

\begin{enumerate} 
\item Trivial logarithmic connections  $(\U^i_{n+1},d+C^i_{n+1})$ over open subsets $Y^i_{n+1} \subset C$ with $d_i \in Y^i_{n+1}$ for $i \neq 0$ and $(Y^i_{n+1})_i$ a cover of $C$
\item Gauge transformations $G^{ij}_{n+1}$ with:

\begin{itemize}
\item $G^{ij}_{n+1}:C^i_{n+1} \mapsto C^j_{n+1}$ on $Y^{ij}_{n+1}=Y^j_{n+1} \cap Y^i_{n+1}$ compatible with projections to level $n$ for all $i,j$
\item $G^{ij}_{n+1}=(G^{ji}_{n+1})^{-1}$ on $Y^{ij}_{n+1}$ for all $i,j$
\item $G^{ii}_{n+1}=\text{id}$ on $Y^{ii}_{n+1}$ for all $i$
\item $G^{jk}_{n+1} \circ G^{ij}_{n+1}=G^{ik}_{n+1}$ on $Y^{ijk}_{n+1}=Y^{i}_{n+1} \cap Y^j_{n+1} \cap Y^k_{n+1}$ for all $i,j,k$ (cocycle condition)
\end{itemize}
\end{enumerate}
\end{itemize}

\textbf{Algorithm}

\begin{enumerate}
   
\item[(1)] For $i \in \lbrace 1,.., r \rbrace$

\begin{enumerate}
\item Compute a gauge transformation $G^{0i}_{n+1}$ of $C_{n+1}$, the connection matrix of $\U_{n+1}$, such that 

\begin{itemize}
\item $G^{0i}_{n+1}$ is compatible with projection to level $n$
\item the image $C^i_{n+1}$ of $C_{n+1}$ has at worst logarithmic poles at $d_i$
\end{itemize}

\end{enumerate}

\item[(2)] For $i \in \lbrace 1,.., r \rbrace$

\begin{enumerate}
\item Choose open $Y_{n+1}^i \subset C$ containing $d_i$ such that $C^i_{n+1}$ has no poles on $Y_{n+1}^i$ except possibly at $d_i$; together with $Y_{n+1}^0=X$ these cover $C$
\item Let $\U^i_{n+1} = R_{n+1} \otimes \O_{Y_{n+1}^i}$
\item Give $\U^i_{n+1}$ the logarithmic connection $d+C^i_{n+1}$.
\end{enumerate}  
\item[(3)] Define:
\begin{itemize}
\item $G^{j0}_{n+1}:=(G^{0j}_{n+1})^{-1}$ for all $j$
\item $G^{ii}_{n+1}:=\text{id}$ for all $i$
\item $G^{ij}_{n+1}:=G^{0j}_{n+1}(G^{0i}_{n+1})^{-1}$ for all $i,j$
\end{itemize} 

\item[(4)] Glue the logarithmic connections $\U^i_{n+1}$, $\U_{n+1}$ together via the isomorphisms $G_{n+1}^{ij}$ to obtain a logarithmic connection $\UU_{n+1}$ with log poles along $D$.   
\end{enumerate}
\end{algorithm}

\begin{remark}
It should be noted that the above algorithm relies on the ability to complete Step 1) a) for each $i \in \lbrace 1,..,r \rbrace$. The next two sections show that we can do this in the case that $C$ is an elliptic curve or an odd model of a  hyperelliptic curve. 
\end{remark}

\begin{remark}
It may be the case that several of the opens $Y^i_{n+1}$ coincide i.e. that a single $Y^{i}_{n+1}$ may be chosen to contain several of the points of $C$ missing from $X$. 
\end{remark}

\begin{remark}
Although the above algorithm implies that the choice of open cover $(Y^i_{n})_i$ at each level of iteration depends on $n$, the results in subsections \ref{Subsection - Logarithmic Extensions on EC} and \ref{Subsection - Logarithmic Extensions on HEC} show that we can often eliminate this dependence on $n$ through some judicious choices for $G^{ij}_{n}$. It should be possible to replicate this in other more general examples of curves by choosing the entries of the $G^{0i}_n$ to be polynomial in some $F$ with a single simple pole at the point $d_i$ where possible. 
\end{remark}

\begin{remark}
As noted in the opening paragraph of this section the logarithmic connection $\UU$ constructed above is universal among pointed unipotent logarithmic connections on $C$ with poles along $D$. In the following sections we shall see that there is a certain amount of choice available when computing these extensions. These different choices give rise to isomorphic unipotent logarithmic connections each of which satisfies the universal property described in Definition \ref{Definition - Universal Projective System}. 
\end{remark}

\begin{remark}As the universal connection $\widetilde{\U}$ on $C$ is also a pointed logarithmic connection there is a morphism of pointed logarithmic connections $\Phi: \UU \rightarrow \widetilde{\U}$. We thus exhibit $\widetilde{\U}$ as a \textit{maximal quotient} of $\UU$ without poles. 
\end{remark}

\subsection{Logarithmic extensions on affine elliptic curves} \label{Subsection - Logarithmic Extensions on EC}

In what follows, we describe this process explicitly for an arbitrary elliptic curve. However, it should be noted that the results presented in this section should easily translate to any general smooth projective curve $C$ punctured at $r$ points. Replacing the dimensions of $H^1_{dr}(X/K)$ with a variable $s:=2g+r-2$ the same results will still apply. However, where $r>1$ care must be taken to compute the logarithmic extension near \textit{each} of the punctured points individually then glue the resulting logarithmic connections together. This would add another layer of notational complexity in what is admittedly an already notation heavy set of results, which is why we have elected here to stick to a simpler example. 

Let $C$ be an elliptic curve over a field $K$ of characteristic $0$ with $K$-rational point at infinity \index{$\PP$}$\PP$. Let $X:=C-\lbrace \PP \rbrace$ be the punctured elliptic curve with model $y^2=f(x)$ where $f(x) \in K[x]$ is a degree 3 polynomial. Recall that $C$ is a genus $1$ curve. We specialise the construction of Definition \ref{Definition - UC on General Affine Curve} to $X$:

Let $\alpha_0,\alpha_1 \in H^0(X, \Omega^1_X)$ be 1-forms on $X$ with $\alpha_0$ regular on $C$ and $\alpha_1$ having a pole of order $2$ at $\PP$ such that the cohomology classes of $\alpha_0,\alpha_1$ are a $K$-basis for $H^1_{dr}(X/K)$. Let $R_n$ and $\U_n$ be as in Definition \ref{Definition - UC on General Affine Curve}. 

\begin{remark}\label{basis uc ec}
It will be convenient at this stage to fix a choice of ordered basis \index{$\B_n$}$\B_n$ for $R_n$. We take as a $K$-basis the words of length less than or equal to $n$ with a graded lexicographic ordering such that $A_0 < A_1 < 1$. With respect to this ordered basis we denote by \index{$w^k_l$}$w^k_l$ the $k$-th word of length $l$. For example, the ordering on all words of length up to $2$ is $w^1_2, w^2_2, w^3_2, w^4_2, w^1_1, w^2_1, w^1_0$. 

Note that there are $2^l$ words of length $l$ and $\B_n$ has order $2^{n}+2^{n-1}+..+1=2^{n+1}-1$. Thus if we have a word $w^k_l$, then $A_0 w^k_l = w^k_{l+1}$ and $A_1 w^k_l=w^{k+2^l}_{l+1}$. We can describe the action of $\nabla$ on a basis for $R_n$:

\[
\nabla( w^k_l) =	\begin{cases} 
						-A_0w^k_l\alpha_0 - A_1w^k_l\alpha_1 = -w^k_{l+1}\alpha_0-w^{k+2^l}_{l+1}\alpha_1 & \text{ if } l \leq n-1 \\ 
						0 & \text{ if } l=n 
					\end{cases}
\]

\end{remark}

\begin{lemma}\label{Lemma - Connection Matrix UC}
The connection matrix of $\U_0$ is the zero matrix. If \index{$C_n$}$C_n$ is the connection matrix of $\U_n$ with respect to the basis $\B_n$, then  

\[
C_{n+1} =\left(	\begin{array}{cc}
					0_{2^{n+1} \times 2^{n+1}} & D_{n+1} \\
					0_{2^{n+1}-1 \times 2^{n+1}} & C_{n}
					\end{array} \right)
\] is the connection matrix of $\U_{n+1}$ with respect to the basis $\B_{n+1}$ where \index{$D_{n+1}$}

\[
D_{n+1} = \left( \begin{array}{cc}
					-\alpha_0 I_{2^n} & 0_{2^n \times 2^{n}-1} \\
					-\alpha_1 I_{2^n} & 0_{2^n \times 2^{n}-1} 
				\end{array} \right).
\] Here $0_{r \times s}$ is the $r \times s$ null matrix and $I_r$ is the $r \times r$ identity matrix. 
\end{lemma}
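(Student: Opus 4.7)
The plan is to unpack the definitions directly: the claim is purely a bookkeeping statement about writing the action of $\nabla$ on the ordered basis $\B_{n+1}$ as a matrix, so the proof reduces to tracking which basis word each $\nabla(w^k_l)$ lands on under the explicit formula given in Remark \ref{basis uc ec}.

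First I would dispose of the base case. Since $\U_0 = R_0 \otimes \O_X$ with $R_0 = K$ and $\nabla_0(1) = 0$ (as the length-zero part of the sum defining $\nabla$ is empty), the $1\times 1$ connection matrix $C_0$ is zero, establishing the statement at level $0$.

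For the inductive step, I would exploit the decomposition of the ordered basis: $\B_{n+1}$ lists the $2^{n+1}$ words of length $n+1$ first (as $w^1_{n+1},\dots,w^{2^{n+1}}_{n+1}$) and then the words of $\B_n$ in their original order. Using the convention that the $i$-th column of the connection matrix records the coefficients of $\nabla(e_i)$ in the basis, I would compute each column of $C_{n+1}$ in two cases. For columns indexed by words $w^k_{n+1}$ of maximal length $l = n+1$, the formula from Remark \ref{basis uc ec} gives $\nabla(w^k_{n+1}) = 0$, so these first $2^{n+1}$ columns of $C_{n+1}$ vanish, producing the left $2^{n+1}$-column block of zeros. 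For columns indexed by $w^k_l \in \B_n$ with $l \leq n-1$, the outputs $-w^k_{l+1}\alpha_0 - w^{k+2^l}_{l+1}\alpha_1$ lie entirely in $\B_n$, and the resulting entries are tautologically the same as those of $C_n$ in the same positions; this contributes $C_n$ as the bottom-right block.

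The only substantive observation is the identification of the top-right block $D_{n+1}$. I would note that among the rightmost $2^{n+1}-1$ columns, only those indexed by length-$n$ words $w^k_n$ ($1 \le k \le 2^n$) produce entries in the length-$(n+1)$ rows, since $\nabla(w^k_n) = -w^k_{n+1}\alpha_0 - w^{k+2^n}_{n+1}\alpha_1$; columns indexed by words of length strictly less than $n$ contribute only to the bottom-right block. Reading off the coefficient of $w^k_{n+1}$ (row $k$) as $-\alpha_0$ and of $w^{k+2^n}_{n+1}$ (row $k+2^n$) as $-\alpha_1$ for each $1 \le k \le 2^n$ gives exactly the block form of $D_{n+1}$ stated, with the final $2^n - 1$ columns zero. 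The only real pitfall will be keeping the graded lexicographic ordering straight so that the shift $k \mapsto k+2^l$ under $A_1$ matches the asserted identity-block pattern in $D_{n+1}$; once that is aligned there is no further content to check.
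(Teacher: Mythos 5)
Your proof is correct and is exactly the "straightforward calculation given Remark \ref{basis uc ec}" that the paper's one-line proof alludes to: you split the columns of $C_{n+1}$ according to the length of the indexing word, note that length-$(n+1)$ words are killed, that words of length $\le n-1$ reproduce $C_n$, and that length-$n$ words supply the $-\alpha_0$ and $-\alpha_1$ identity blocks of $D_{n+1}$ via $A_0 w^k_n = w^k_{n+1}$ and $A_1 w^k_n = w^{k+2^n}_{n+1}$. Nothing is missing, and your explicit statement of the column convention for the connection matrix is consistent with the block form asserted in the lemma.
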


\begin{proof}
This is just a straightforward calculation given Remark \ref{basis uc ec}. 
  \end{proof}

We now explicitly compute the extension of the $\U_n$ to logarithmic connections on $C$ by application of Algorithm \ref{Algorithm - Computing GT on UC}. Recall that the logarithmic extension $\UU_0$ of $\U_0$ is defined to be $(\O_C,d)$. We now present an example calculation of the computation of the extension at level $1$. 

\begin{example} \label{Example - Level 1 Extension EC}
The ordered basis elements for $R_1$ are $\B_1=\lbrace A_0,A_1,1 \rbrace$. Then we have 
\begin{align*}
&\nabla(A_0)=\nabla(A_1)=0 \\
&\nabla(1) = -A_0\alpha_0 - A_1 \alpha_1
\end{align*} The connection matrix of $\nabla$ on $\U_1=R_1 \otimes \O_X$ with respect to $\B_1$ is 
\[
C_1 =	\left( \begin{array}{ccc}
			0 & 0 & -\alpha_0 \\
			0 & 0 & -\alpha_1 \\
			0 & 0 & 0 \\
		\end{array} \right) 
\] There is a natural projection map $\pi_1: \U_1 \twoheadrightarrow \O_X$. We wish to find an open $Y \subset C$ containing $\PP$, a connection $\U'_1$ on $Y$, and a gauge transformation $G_1$ such that over $X \cap Y$ we have a commutative diagram
\begin{equation}\label{Equation - Commutative Condition Level 1}
\begin{tikzcd}
\U_1 \arrow[swap]{d}{\pi_1} \arrow{r}{G_1} & \U'_1 \arrow{d}{\pi_1'}\\
\O_X \arrow{r}{\id} & \O_Y
\end{tikzcd}
\end{equation} where $\pi'_1$ is a projection map from $\U'_1 \rightarrow \O_Y$. In order to ensure that $G_1$ satisfies (\ref{Equation - Commutative Condition Level 1}) it must be of the form 

\[
G_1 = \left( \begin{array}{ccc}
			1 & 0 & h^1 \\
			0 & 1 & h^2 \\
			0 & 0 & 1 \\
		\end{array} \right)
\] where $h^1,h^2 \in K(C)$. The gauge transformation of $C_1$ by $G_1$ is

\[
C'_1= G_1^{-1}dG_1+G_1^{-1}C_1G_1=C_1+dG_1=	\left( 	\begin{array}{ccc}
			0 & 0 & -\alpha_0 +dh^1 \\
			0 & 0 & -\alpha_1+dh^2 \\
			0 & 0 & 0 \\
		\end{array} \right)
\]

Choose $h^1=0$ and $h^2=F \in K(C)$ with a pole of order $1$ at $\PP$ such that $-\alpha_1+dF$ is regular at $\PP$. Let $Y$ be the open set $\lbrace P \in C: \alpha_1+dF \text{ is regular at } P \rbrace \subset C$. Then we define $\U'_1$ to be the connection on $Y$ defined with connection matrix $C_1'$ and bundle  $R_1 \otimes \O_Y$. The logarithmic extension $\UU_1$ of $\U_1$ is then described by the descent datum of the logarithmic connections $\U_1$ (on $X$) and $\U'_1$ (on $Y$) together with the gauge transformation $G_1: \U_1|_{X \cap Y} \xrightarrow{\sim} \U'_1|_{X \cap Y}$. 
\end{example}

Having dealt with the first non-trivial case we now turn our attention to computing the extension of $\U_{n+1}$ for general level. We will do this by showing that we can calculate a suitable gauge transformation $G_{n+1}$ given an extension $\UU_n$ defined at level $n$. As in Example \ref{Example - Level 1 Extension EC} we find that $G_{n+1}$ should be of the form \index{$G_{n+1}$}\[
G_{n+1} = \left( 	\begin{array}{cc}
						I_{2^{n+1}} & H_{n+1} \\
						0 & G_n 
					\end{array} \right)
\] where \index{$H_{n+1}$}$H_{n+1}$ is some $2^{n+1} \times 2^{n+1}-1$ matrix over $K(C)$ which we need to determine. The bundle $\U'_{n+1}$ should again be a trivial bundle $R_{n+1} \otimes \O_Y$, where we would like to choose $Y$ as in Example \ref{Example - Level 1 Extension EC}. 

\begin{lemma}\label{Lemma - Gauge Transformation of Connection Matrix}
Let $n \geq 1$ and let $C_n$ (resp. $C_{n+1}$) be the connection matrix of $\U_n$ (resp. $\U_{n+1}$) with respect to the basis $\B_n$ (resp. $\B_{n+1}$). Suppose that $\U_n$ extends to a logarithmic connection $\UU_n$ described by a logarithmic connection $\U_n$ over $X$, a logarithmic connection $\U'_n$ over some open $Y$ and a gauge transformation $G_n$ over $X \cap Y$. Suppose that $G_n$ takes $C_n$ to $C'_n$, the connection matrix of $\U'_n$. If $G_{n+1}$ is a gauge transformation of $C_{n+1}$  over $X\cap Y$ of the form 

\[
G_{n+1} = \left( 	\begin{array}{cc}
						I_{2^{n+1}} & H_{n+1} \\
						0 & G_n 
					\end{array} \right)
\]then the gauge transformation of the matrix $C_{n+1}$ of Lemma \ref{Lemma - Connection Matrix UC} by $G_{n+1}$ is \index{$C'_{n+1}$}

\[
C'_{n+1} = G_{n+1}^{-1}dG_{n+1}+G_{n+1}^{-1}C_{n+1}G_{n+1}= \left( 	\begin{array}{cc}
						0 & D_{n+1}' \\
						0 & C_n' 
					\end{array} \right)
					\]where \index{$D'_{n+1}$}$D_{n+1}' = D_{n+1}G_n+dH_{n+1}-H_{n+1}C'_n$ and \[
D_{n+1} = \left( \begin{array}{cc}
					-\alpha_0 I_{2^n} & 0_{2^n \times 2^{n}-1} \\
					-\alpha_1 I_{2^n} & 0_{2^n \times 2^{n}-1} 
				\end{array} \right).
\]
\end{lemma}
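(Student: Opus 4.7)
The proof is essentially a direct block-matrix calculation, so my plan is simply to carry it out carefully, exploiting the upper-block-triangular form of both $G_{n+1}$ and $C_{n+1}$, and then to recognise the inductive data at the very end. The main ingredients I will need are the block-inverse formula and the definition $C'_n = G_n^{-1} dG_n + G_n^{-1} C_n G_n$ from the hypothesis that $G_n$ is the gauge transformation taking $C_n$ to $C'_n$.

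First, using the standard formula for the inverse of a block-triangular matrix, I will write
\[
G_{n+1}^{-1} = \begin{pmatrix} I_{2^{n+1}} & -H_{n+1} G_n^{-1} \\ 0 & G_n^{-1} \end{pmatrix}.
\]
Then $dG_{n+1}$ is obtained by differentiating entrywise, giving a matrix with zeros in the left column and $dH_{n+1}$, $dG_n$ in the right column. Multiplying these together I compute
\[
G_{n+1}^{-1} dG_{n+1} = \begin{pmatrix} 0 & dH_{n+1} - H_{n+1} G_n^{-1} dG_n \\ 0 & G_n^{-1} dG_n \end{pmatrix}.
\]

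Next I compute the conjugation term. Because the left column of $C_{n+1}$ is zero, $C_{n+1} G_{n+1}$ has left column zero as well, and its right column consists of $D_{n+1} G_n$ (top) and $C_n G_n$ (bottom). Multiplying on the left by $G_{n+1}^{-1}$ then yields
\[
G_{n+1}^{-1} C_{n+1} G_{n+1} = \begin{pmatrix} 0 & D_{n+1} G_n - H_{n+1} G_n^{-1} C_n G_n \\ 0 & G_n^{-1} C_n G_n \end{pmatrix}.
\]

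Finally I add the two contributions. The left block column is manifestly zero, and the bottom-right block reads
\[
G_n^{-1} dG_n + G_n^{-1} C_n G_n = C'_n
\]
by the inductive assumption on $G_n$. The top-right block is
\[
D_{n+1} G_n + dH_{n+1} - H_{n+1}\bigl(G_n^{-1} dG_n + G_n^{-1} C_n G_n\bigr) = D_{n+1} G_n + dH_{n+1} - H_{n+1} C'_n,
\]
which is the claimed expression for $D'_{n+1}$. There is no real obstacle here; the only thing to be careful about is that the block sizes line up correctly and that one collects the inductive identity $C'_n = G_n^{-1} dG_n + G_n^{-1} C_n G_n$ at the right moment so that $H_{n+1}$ multiplies $C'_n$ rather than its unsimplified form.
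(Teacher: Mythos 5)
Your computation is correct and is exactly the paper's argument: the paper's proof simply records the block-inverse formula for $G_{n+1}^{-1}$ and leaves the rest as the "easy" calculation that you have written out in full. The two block products and the substitution $C'_n = G_n^{-1}dG_n + G_n^{-1}C_nG_n$ all check out, so there is nothing to add.
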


\begin{proof}

This follows easily from the definition of the gauge transformation and noting that \[
G_{n+1}^{-1} = \left( 	\begin{array}{cc}
						I_{2^{n+1}} & -H_{n+1}G_n^{-1} \\
						0 & G_n^{-1} 
					\end{array} \right).
\]
  \end{proof} 

\begin{remark}
Note that in the above $C_n'$ will be a matrix of $1$-forms with at worst logarithmic poles at $\PP$ by the assumption that $\UU_n$ is logarithmic. Therefore, in order to compute a suitable gauge transformation $G_{n+1}$ we need to find a matrix of functions  $H_{n+1}$ such that \[dH_{n+1}+D_{n+1}G_n-H_{n+1}C_n'\] has entries with at worst logarithmic poles at $\PP$. This is the content of the following theorem. 
\end{remark}

In the course of the proof of the theorem we will need to make use of the following two auxiliary functions. 

\begin{definition} \label{Definition - Auxiliary Functions}
Let \index{$\psi$}\index{$\phi$}$\psi,\phi:\Z^3 \times \N \mapsto \Z$ be the functions defined by \begin{equation} \label{Equation - Block Nesting Function}
\psi(r,i,j,k) := \begin{cases} \lfloor k^{i-j}r \rfloor +1 & \text{ if } r \not \equiv 0 \mod k^{j-i} \\
					\lfloor k^{i-j}r \rfloor & \text{ otherwise} \end{cases} 
\end{equation} and 
\begin{equation}
\phi(r,i,j,k):= (r-1)k^i-(\psi(r,i,j,k)-1)k^j.
\end{equation}
\end{definition}

\begin{remark}
It is helpful at this stage to think about the functions $\phi$ and $\psi$ in terms words in the ordered basis $\B_n$ from Remark \ref{basis uc ec}. Suppose that we have an alphabet with $k$-letters $A_i$ and let $w$ be the $r$-th word of length $j$ with respect to the lexicographic ordering $A_0<..<A_{k-1}<1$. Then for $i <j$ the first $i$ letters of $w$ are the $\psi(r,i,j,k)$-th word of length $i$ and the last $j-i$ letters of  $w$ are the $(\phi(r,i,j,k)2^{-i}+1)$-th word of length $j-i$.
\end{remark}

We are now ready to state and prove the following theorem. 

\begin{thm}\label{Theorem - Compute GT on UC of EC} Let $C,X, \U_n$ and $C_n$ be as above. Then there is an open $Y \subset C$ containing $\PP$ such that for all $n$ there is an isomorphism $G_n$ of $\U_n$ with a connection $\U'_{n} = (R_n \otimes \O_Y,d+C_n')$ over $X \cap Y$ which is compatible with projection to lower level. The  isomorphism $G_n$ can be chosen such that 

\[
G_n =  \left( \begin{array}{cc}
						I_{2^n} & H_n \\
						0 & G_{n-1} \end{array} \right)
\] where $H_n$ is a matrix of functions on $C$ and $C_n'$ is a matrix of $1$-forms on $C$ such that 
\index{$H^{r,i}_n$}\index{$C^{r,i}_n$}
\[H_n = \left( \begin{array}{ccccc}
						H^{1,n-1}_n& \hdots & H^{1,i}_n & \hdots & H^{1,0}_n \\
						\vdots & & \vdots & & \vdots \\
						H^{2,n-1}_n& \hdots & H^{2^{n-i},i}_n & \hdots & H^{2^n,0}_n \end{array} \right); \; \;
C'_n = \left( \begin{array}{ccccc}
			C^{1,n}_{n}& \hdots & C^{1,i}_{n} & \hdots & C^{1,0}_{n} \\
			\vdots & & \vdots & & \vdots \\
			\vdots & \hdots & C^{2^{n+1-i}-1,i}_n & \hdots & C^{2^{n+1}-1,0}_n \\
			C^{2,n}_n & \hdots & C^{2^{n+1-i},i}_n & \hdots& C^{2^{n+1},0}_n \end{array} \right)
\] where 
\begin{enumerate}
\item[(A)] For all $r,i$ there are rational functions \index{$h^{r,i}_n$}$h^{r,i}_{n}\in K(C)$ such that $H^{r,i}_n= h^{r,i}_{n}I_{2^i}$
\item[(B)] For all $i$ we have 

\begin{enumerate}
\item[(B1)] For all $r=1,..,2^{n+1-i}-2$ there are $1$-forms \index{$c^{r,i}_n$}$c^{r,i}_{n+1} \in \Omega^1_{X/K}$ on $X$ with at worst logarithmic poles at $\PP$ such that $C^{r,i}_n = c^{r,i}_n I_{2^i}$
\item[(B2)] $C^{2^{n+1-i}-1,i}_n = 0_{2^j \times 2^j}$
\item[(B3)] $C^{2^{n+1-i},i}_n=0_{2^i-1 \times 2^i}$
\end{enumerate}
\end{enumerate} 

\end{thm}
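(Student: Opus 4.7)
The plan is to proceed by induction on $n$, with base case $n=0$ (or equivalently $n=1$ via Example \ref{Example - Level 1 Extension EC}) where $\UU_0 = (\O_C,d)$ is already trivially logarithmic and $G_0$ is the empty/identity data. For the inductive step, I assume that $G_n$, $H_n$, $C'_n$ have been constructed with the stated block structure and build $G_{n+1}$ of the prescribed block form
\[
G_{n+1} = \begin{pmatrix} I_{2^{n+1}} & H_{n+1} \\ 0 & G_n \end{pmatrix}.
\]
By Lemma \ref{Lemma - Gauge Transformation of Connection Matrix}, the transformed matrix is determined once $H_{n+1}$ is chosen, and the block $C'_n$ is inherited from the induction hypothesis. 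So the problem reduces to choosing $H_{n+1}$ such that $D'_{n+1} = D_{n+1} G_n + dH_{n+1} - H_{n+1} C'_n$ is a matrix of $1$-forms with at worst logarithmic poles at $\PP$, and moreover decomposes into blocks satisfying (A), (B1), (B2), (B3).

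The key analytic input is the existence of a rational function $F \in K(C)$ with a simple pole at $\PP$ such that $-\alpha_1 + dF$ is regular at $\PP$, exactly as in Example \ref{Example - Level 1 Extension EC}. Since $\alpha_0$ is already regular at $\PP$ and $\alpha_1$ has a pole of order $2$, expanding $D_{n+1} G_n$ using the induction hypothesis $h^{r,i}_n$-block structure of $G_n$ produces entries of the form $-\alpha_0 h^{r,i}_n$ and $-\alpha_1 h^{r,i}_n$, the latter being the only source of non-logarithmic poles. I would take each entry of $H_{n+1}$ to be built from $F$ and the $h^{r,i}_n$ in precisely the combination needed so that $dH_{n+1}$ cancels the offending order-two poles and produces only logarithmic residues. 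The open set $Y$ can then be fixed once and for all as the complement of the poles of $F$ together with the poles of whatever auxiliary rational functions appear — since every choice can be made to have poles only at $\PP$, $Y$ may be taken independent of $n$.

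The combinatorial organization of the blocks is where the auxiliary functions $\psi, \phi$ of Definition \ref{Definition - Auxiliary Functions} enter. A word of length $n+1$ in $\{A_0, A_1\}$ factors uniquely as an $i$-letter prefix times an $(n+1-i)$-letter suffix, and this decomposition is encoded in the prefix index $\psi(r,i,n+1,2)$ and suffix offset $\phi(r,i,n+1,2)$. Performing the block multiplications $D_{n+1} G_n$ and $H_{n+1} C'_n$ and then grouping the entries according to suffix length $i$ precisely recovers the $H^{r,i}_{n+1}$ and $C^{r,i}_{n+1}$ blocks, with the scalar-multiple-of-identity shape in (A) and (B1) coming from the fact that multiplication by $A_0$ or $A_1$ shifts the prefix index in a way compatible with the identity blocks. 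The conditions (B2) and (B3) correspond to the rows/columns indexed by the trivial word $1$ and by the length-$i$ suffixes that do not receive a non-trivial contribution from $D_{n+1}$.

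The main obstacle I expect is purely bookkeeping: propagating the identity-block shape (A) and the block-by-block vanishing conditions (B2), (B3) through the matrix identity $D'_{n+1} = D_{n+1} G_n + dH_{n+1} - H_{n+1} C'_n$ requires a careful inductive choice of $h^{r,i}_{n+1}$ indexed via $\psi, \phi$, and verifying that this choice simultaneously achieves logarithmicity at $\PP$ and preserves the structural shape. Once the right recursive formulas for $h^{r,i}_{n+1}$ are written down, each required property reduces to a straightforward (if lengthy) calculation using the induction hypothesis and the defining property $-\alpha_1 + dF \in \Omega^1_C(\PP)_{\mathrm{log}}$.
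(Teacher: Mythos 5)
Your proposal follows essentially the same route as the paper's proof: an outer induction on $n$ reducing, via Lemma \ref{Lemma - Gauge Transformation of Connection Matrix}, to choosing $H_{n+1}$ polynomial in $F$ so that $D'_{n+1}=dH_{n+1}+D_{n+1}G_n-H_{n+1}C'_n$ is logarithmic, with the block structure organized by the prefix/suffix functions $\psi,\phi$ and with $Y$ fixed independently of $n$. The one imprecision is that the poles to be cancelled are not merely of order two: the terms $c^{\cdot,\cdot}_t h^{\cdot,t}_{n+1}$ coming from $H_{n+1}C'_n$ contribute poles of arbitrarily high order (whence the appearance of $\frac{1}{2}F^2$, $\frac{1}{6}F^3$, \dots), and since they involve the yet-undetermined entries of $H_{n+1}$ itself, the paper runs an inner descending induction on the block column index $j$ so that the $h^{\cdot,t}_{n+1}$ with $t>j$ are already fixed when $h^{r,j}_{n+1}$ is chosen.
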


\begin{remark}
Note that the matrices $H^{r,i}_n$ and $C^{r,i}_n$ have entries indexed by words. We can think of these two matrices in the following way: if I take a basis element coming from a word of length $i$ in $\B_n$ and apply the gauge transformation $G_n$ to it then \index{$H^{r,i}_n$}$H^{r,i}_n$ gives the coefficients of the basis elements in the result coming from words of length $n$ whose first $n-i$ letters are the word with index $r$. Similarly, if I differentiate a basis element coming from a word of length $i$ in $\B_n$ using the connection $d+C'_n$ then \index{$C^{r,i}_n$}$C^{r,i}_n$ gives the coefficients of the basis elements in the result coming from words of length $n$ whose first $n-i$ letters are the word with index $r$.
\end{remark}

\begin{proof}
We shall proceed by induction and assume that the statement of the theorem holds for all $r\leq n$. In Example \ref{Example - Level 1 Extension EC} we have already shown that this is true in the case that $n=1$ with $h^{1,0}_1=0$ and $h^{2,0}_1=F$. Let $Y$ be as in the aforementioned example. 

In Lemma \ref{Lemma - Gauge Transformation of Connection Matrix} we saw that $C_{n+1}'$ is of the form 

\[
 \left( \begin{array}{cc}
 			0 & D'_{n+1}\\
 			0 & C_n' \end{array} \right)
\] where $D'_{n+1} = dH_{n+1}+D_{n+1}G_n-H_{n+1}C'_n$. By inductive hypothesis $C_n'$ satisfies condition \textit{(B)}. Therefore, we must show that $H_{n+1}$ can be chosen such that 
\begin{enumerate}
\item[(i)] $D'_{n+1}$ has entries with at worst logarithmic poles at $\PP$ 
\item[(ii)] that $H_{n+1}$ satisfies condition \textit{(A)}
\end{enumerate}
and conclude then that $C_{n+1}'$ satisfies \textit{(B)}. 

We split $H_{n+1}$ and $C'_{n+1}$ into block matrices 
\[
H_{n+1} = \left( \begin{array}{ccccc}
						H^{1,n}_{n+1}& \hdots & H^{1,i}_{n+1} & \hdots & H^{1,0}_n \\
						\vdots & & \vdots & & \vdots \\
						H^{2,n}_{n+1}& \hdots & H^{2^{n+1-i},i}_{n+1} & \hdots & H^{2^{n+1},0}_{n+1} \end{array} \right); \; \; C'_{n+1} = \left( \begin{array}{ccccc}
			C^{1,n+1}_{n+1}& \hdots & C^{1,i}_{n+1} & \hdots & C^{1,0}_{n+1} \\
			\vdots & & \vdots & & \vdots \\
			\vdots & \hdots & C^{2^{n+2-i}-1,i}_{n+1} & \hdots & C^{2^{n+2}-1,0}_{n+1} \\
			C^{2,n+1}_{n+1} & \hdots & C^{2^{n+2-i},i}_{n+1} & \hdots& C^{2^{n+2},0}_{n+1} \end{array} \right)
\] where each $H^{r,i}_{n+1}$ is a $2^i \times 2^i$ matrix; for $r=1,..,2^{n+2-i}-1$ each $C^{r,i}_{n+1}$ is a $2^i \times 2^i$ matrix; and, each $C^{2^{n+2-i},i}_{n+1}$ is a $2^i-1 \times 2^i$ matrix. In order to complete the induction step we will try to express $C^{r,i}_{n+1}$ in terms of block matrices from $H_n,H_{n+1}$ and $C_n'$. 

It will be convenient to  say that $H^{r,i}_{n+1}$ in $H_{n+1}$ \textit{stems} from the block matrix $H^{s,j}_{n+1}$ for $i+1 \leq j \leq n$ if the rows of $H_{n+1}$ containing $H^{r,i}_{n+1}$ are a subset of the rows of $H_{n+1}$ containing $H^{s,j}_{n+1}$. It is not difficult then to see that $H^{r,i}_{n+1}$ in $H_{n+1}$ stems from the block matrix \index{$\psi$}$H^{\psi(r,i,j,2),j}_{n+1}$. 

A simple calculation shows that 

\[
D_{n+1}G_n = \left( \begin{array}{cc}
						-\alpha_0 I_{2^n} & -\alpha_0 H_n \\
						-\alpha_1 I_{2^n} & -\alpha_1 H_n
					\end{array} \right)
\] and by the inductive hypothesis we compute that the contribution of $dH_{n+1}+D_{n+1}G_n$ to $C^{r,i}_{n+1}$ is 

\[
\begin{cases} dH^{r,n}_{n+1} - \alpha_{r-1} I_{2^n} & \text{ if } i=n \\
			  dH^{r,i}_{n+1} - \alpha_{\psi(r,i,n,2)-1}h_n^{r-(\psi(r,i,n,2)-1)2^{n-i},i}I_{2^i} & \text{ if } i < n. \end{cases}
\]

The contribution coming from $H_{n+1}C'_n$ is more complicated to work out. We introduce a second inductive step to the argument, with our inductive hypothesis being that condition \textit{(A)} is satisfied by $H_{n+1}$ for $i=n,n-1,...,j+1$. We then show that this implies that condition \textit{(A)} is satisfied by $H_{n+1}$ for $i=j$. 

For our base case, we need to show that $H_{n+1}$ satisfies condition \textit{(A)} for $i=n$. It is straightforward, however, to see that 

\[
H_{n+1}C'_n = \left( \begin{array}{cc}
						0_{2^n \times 2^n} & * \\
						0_{2^n \times 2^n} & *  \end{array} \right) 
\] and so $C^{r,n}_{n+1} = dH^{r,n}_{n+1} - \alpha_{r-1} I_{2^n}$ for $r=1,2$. We choose $H^{r,n}_{n+1}=h^{r,n}_{n+1}I_{2^n}$ where $h^{r,n}_{n+1} \in K(C)$  such that $dh^{r,n}_{n+1} - \alpha_{r-1}$ is regular at $\PP$. To ensure that we may continue to define the resulting logarithmic connection $\U'_{n+1}$ over the same open $Y$ as $\U'_n$, we choose $h^{r,n}_{n+1}$ to be polynomial in $h^{2,0}_1=F$. So we have that condition \textit{(A)} holds for $i=n$. 

Now let $j < n$ and assume then that condition \textit{(A)} holds for $i=n,...,j+1$. Now, rows $(r-1)2^j+1$ to $(r-1)2^j+2^j$ are the rows of $H_{n+1}$ containing $H^{r,j}_{n+1}$. They have the form

\[
\left( \begin{array}{ccccccc} 
			H^{r,j,n}_{n+1} & H^{r,j,n-1}_{n+1} & ... & H^{r,j,j+2}_{n+1}& H^{r,j,j+1}_{n+1} & H^{r,j}_{n+1} & ... \end{array} \right)
\] where each \index{$H^{r,j,i}_{n+1}$}$H^{r,j,i}_{n+1}$ is a $2^j \times 2^i$ matrix. By the inductive hypothesis on $H^{r,i}_{n+1}$ we have the for $i=n,..,j+1$ 

\[
H^{r,j,i}_{n+1} = \left( \begin{array}{ccc} 0_{2^j \times \phi(r,j,i,2)} &  h^{\psi(r,j,i,2),j}_{n+1}I_{2^j} & 0_{2^j \times 2^i - (\phi(r,j,i,2)+2^j)} \end{array} \right) .
\]

Therefore we conclude that the contribution of $H_{n+1}C'_n$ to $C^{r,j}_{n+1}$ is 

\[
\begin{split}
 & \left( \begin{array}{ccccccc} 
			H^{r,j,n}_{n+1} & H^{r,j,n-1}_{n+1} & ... & H^{r,j,j+2}_{n+1}& H^{r,j,j+1}_{n+1} & H^{r,j}_{n+1} & ... \end{array} \right) \times \left( \begin{array}{ccccc}
						c^{1,j}_{n}I_{2^j} & \hdots & c^{2^{n+2-j}-2,j}_n I_{2^j}& 0_{2^j \times 2^j} & 0_{2^j-1\times 2^j}
						\end{array} \right)^t \\
& = \left( \sum_{t=j+1}^n c^{(2^n+2^{n-1}+..+2^{t+1}+\phi(r,j,t,2))2^{-j}+1,j}_n h^{\psi(r,j,t,2),t}_{n+1} \right)  I_{2^j} \\
 & = \left( \sum_{t=j+1}^n c^{2^{n+1-j}-2^{t+1-j}+\phi(r,j,t,2)2^{-j}+1,j}_n h^{\psi(r,j,t,2),t}_{n+1} \right)  I_{2^j} \\
\end{split}
\]

This expression can be simplified further by noting the following: since we are assuming that $j<n$ the terms $c^{2^{n+1-j}-2^{t+1-j}+\phi(r,j,t,2)2^{-j}+1,j}_n$ appearing above can be replaced with $c^{k,j}_m$ for some $k,m$ with $m<n$. In particular, the original inductive hypothesis implies that if $r > 2^{n-j}$ then $C^{r,j}_n = C^{r-2^{n-j},j}_{n-1}$ and hence that $c^{r,j}_n = c^{r-2^{n-j},j}_{n-1}$. Therefore, we conclude that we may replace each instance of $c^{2^{n+1-j}-2^{t+1-j}+\phi(r,j,t,2)2^{-j}+1,j}_n$ appearing above with $c^{\phi(r,j,t,2)2^{-j}+1,j}_t$. Then we may express $C^{r,j}_{n+1}$ as

\[
C^{r,j}_{n+1} = dH^{r,j}_{n+1} - \left( \alpha_{\psi(r,j,n,2)-1}h_n^{r-(\psi(r,j,n,2)-1)2^{n-j},j}+ \sum_{t=j+1}^n c^{\phi(r,j,t,2)2^{-j}+1,j}_t h^{\psi(r,j,t,2),t}_{n+1} \right)I_{2^n}
\]

The bracketed expression is a $1$-form on the curve $C$, and so we can choose $h^{r,j}_{n+1}\in K(C)$ polynomial in $h^{2,0}_1=F$ such that \index{$c^{r,i}_n$}\index{$h^{r,i}_n$}

\begin{equation}\label{Equation - Proof of Log Ext}
dh^{r,j}_{n+1}- \alpha_{\psi(r,j,n,2)-1}h_n^{r-(\psi(r,j,n,2)-1)2^{n-j},i}- \sum_{t=j+1}^n c^{\phi(r,j,t,2)2^{-j}+1,i}_t h^{\psi(r,j,t,2),t}_{n+1}
\end{equation} has at worst logarithmic poles at $\PP$ and which has no other poles on $Y$. Take $H^{r,j}_{n+1}:= h^{r,j}_{n+1}I_{2^j}$. Repeating this for $r=1,...,2^{n+1-j}$ we see that condition \textit{(A)} is satisfied for $H_{n+1}$ for $i=j$. Hence, by induction condition \textit{(A)} is satisfied by $H_{n+1}$. The construction also makes it clear that condition \textit{(B)} is satisfied by $C'_{n+1}$ with $c^{r,i}_{n+1}:= (\ref{Equation - Proof of Log Ext})$. 

Now let $\U'_{n+1}$ be the logarithmic connection on $Y$ with bundle $R_{n+1} \otimes \O_Y$ and connection $d+C'_{n+1}$. By induction we conclude that for each $n$ there is an isomorphism $G_n$ from $\U_n$ to $\U'_{n}$ over $Y_{\infty}$ such that the connection matrix of $\U'_n$ is logarithmic on $Y$ along $\PP$. For each $n$ the logarithmic extension $\UU_{n}$ of $\U_n$ is described, therefore, by the descent datum of the logarithmic connection $\U_n$ over $X$, the logarithmic connection $\U'_n$ over $Y$ and the isomorphism $G_n : \U_n |_{X \cap Y} \xrightarrow{\sim} \U'_n|_{X \cap Y}$. 
  \end{proof}

The following lemma shows how we may simplify the computations involved in application of the above theorem. 

\begin{lemma}\label{Lemma - Computing GT on UC of EC}
Let $C,D$ and $X$ be as above. Let $n\geq 1$ and let $\U_n$ be the $n$-th level finite quotient of the universal connection $\U$ on $X$. Let $\U'_n$ be the connection and $G_n$ the isomorphism defined in Theorem \ref{Theorem - Compute GT on UC of EC}. Then the functions $h^{r,i}_n$ may be chosen such that for $i \geq 0$ we have $h^{r,i+1}_{n+1}=h^{r,i}_n$ and $c^{r,i+1}_{n+1}=c^{r,i}_n$. 
\end{lemma}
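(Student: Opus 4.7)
The plan is to prove the lemma by induction on $n$, using the recursive formulas that define the functions $h^{r,i}_n$ in the proof of Theorem \ref{Theorem - Compute GT on UC of EC}. The base case $n=1$ is already settled by Example \ref{Example - Level 1 Extension EC}: we showed $h^{1,0}_1=0$ and $h^{2,0}_1=F$, and at level $2$ the functions $h^{r,1}_2$ are determined by the condition that $dh^{r,1}_2-\alpha_{r-1}$ has at worst a logarithmic pole at $\PP$ and no other poles on $Y$, which is solved by the same choices $h^{1,1}_2=0$, $h^{2,1}_2=F$; similarly $c^{r,1}_2 = c^{r,0}_1$.

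For the inductive step, assume $h^{s,j+1}_m = h^{s,j}_{m-1}$ and $c^{s,j+1}_m=c^{s,j}_{m-1}$ for all $m\leq n$ and all relevant indices. I would show that the defining equations for $h^{r,i+1}_{n+1}$ and $h^{r,i}_n$ from equation (\ref{Equation - Proof of Log Ext}) coincide, so the same choice of primitive is admissible. Concretely, I would establish the combinatorial identities
\[
\psi(r,i+1,n,2) = \psi(r,i,n-1,2), \qquad \psi(r,i+1,t+1,2) = \psi(r,i,t,2),
\]
\[
\phi(r,i+1,t+1,2) = 2\,\phi(r,i,t,2),
\]
which follow directly from Definition \ref{Definition - Auxiliary Functions} because $\psi(r,i,j,2)$ depends on $(i,j)$ only through the difference $j-i$, and $\phi$ scales compatibly when both $i$ and $j$ are shifted by one.

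With these identities in hand, I would re-index the summation range for $h^{r,i+1}_{n+1}$ via $t'=t-1$ so that it runs from $i+1$ to $n-1$, matching the range for $h^{r,i}_n$. Applying the identities and the inductive hypothesis (both for $h$ at level $n$ and for $c$ at levels $\leq n$), each summand in the defining equation for $h^{r,i+1}_{n+1}$ becomes identically equal to the corresponding summand in the defining equation for $h^{r,i}_n$; the single leading term $\alpha_{\psi(r,i+1,n,2)-1} h_n^{r-(\psi(r,i+1,n,2)-1)2^{n-i-1},i+1}$ matches $\alpha_{\psi(r,i,n-1,2)-1}h_{n-1}^{r-(\psi(r,i,n-1,2)-1)2^{n-1-i},i}$ for the same reason. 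One must also handle the boundary cases $i=n$ and $i=n+1$ separately, but both reduce to $dh^{r,n+1}_{n+1} - \alpha_{r-1}$ being logarithmic at $\PP$, so the choice is consistent.

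Hence we may take $h^{r,i+1}_{n+1}:=h^{r,i}_n$, and since $c^{r,i+1}_{n+1}$ is defined by the expression (\ref{Equation - Proof of Log Ext}) evaluated at these data, the equality $c^{r,i+1}_{n+1}=c^{r,i}_n$ follows automatically. The main obstacle is the careful bookkeeping of the indices in $\phi$ and $\psi$ and the re-indexing of the inner sum; once the combinatorial shift identities are established, the rest is a direct comparison of the two recurrences.
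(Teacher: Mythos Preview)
Your approach is essentially the same as the paper's: the paper's proof is the one-line remark that this follows by induction once you note $\psi(r,i,j,2)=\psi(r,i-1,j-1,2)$ and $\phi(r,i,j,2)=2\phi(r,i-1,j-1,2)$ and feed these into equation~(\ref{Equation - Proof of Log Ext}). You have identified precisely these shift identities and the correct re-indexing $t'=t-1$ of the sum.

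There is one small gap in how you have organised the induction. In the sum defining $h^{r,i+1}_{n+1}$, the factors $h^{\psi(r,i+1,t,2),t}_{n+1}$ are at level $n+1$, not at level $\leq n$, so your stated outer hypothesis ``$h^{s,j+1}_m = h^{s,j}_{m-1}$ for all $m\leq n$'' does not directly apply to them. What you actually need is the statement at level $n+1$ but for larger second superscript $t>i+1$; this is exactly why the proof of Theorem~\ref{Theorem - Compute GT on UC of EC} runs its inner induction downward on $j$. If you structure your argument the same way---prove $h^{r,n}_{n+1}=h^{r,n-1}_n$ first (your ``boundary case'', where the sum is empty and both sides are determined by $dh-\alpha_{r-1}$ being logarithmic), then descend in $i$---each summand $h^{\ldots,t}_{n+1}$ with $t\geq i+2$ has already been identified with $h^{\ldots,t-1}_n$, and the comparison goes through. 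The leading term $h_n^{\ldots,i+1}$ is handled by the outer hypothesis at level $n$, as you say. With that adjustment your argument is complete and matches the paper. (There is no case $i=n+1$; the top case is $i+1=n$.)
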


\begin{proof}
This is a simple induction argument on noting that $\psi(r,i,j,2) = \psi(r,i-1,j-1,2)$ and $\phi(r,i,j,2)=2\phi(r,i-1,j-1,2)$ and using condition (\ref{Equation - Proof of Log Ext}) in the proof of Theorem \ref{Theorem - Compute GT on UC of EC}. 
  \end{proof}

\begin{remark}
Theorem \ref{Theorem - Compute GT on UC of EC} demonstrates that we may iteratively compute suitable logarithmic extensions of the universal connection $\U$ on $X$ by computing compatible logarithmic extensions of the finite level quotients $\U_n$. Lemma \ref{Lemma - Computing GT on UC of EC} simplifies the computations involved in computing the connection matrices and gauge transformations. Assume we have computed the extension $\UU_n$ in the form of the datum $\U_n,\U'_n$ and $G_n$. Then we simply need to determine what the image of $1$ at level $n+1$ under the gauge transformation $G_{n+1}$ should be. By Lemma \ref{Lemma - Computing GT on UC of EC} this immediately determines a suitable gauge transformation. That is, at level $n+1$ for each $r=1,..,2^n$  we need to compute a $h^{r,0}_{n+1} \in K(C)$ (polynomial in $F=h^{2,0}_1$) such that 

\begin{equation} \label{easy gauge computation ec}
dh^{r,0}_{n+1}- \alpha_{\psi(r,0,n,2)-1}h_n^{r-(\psi(r,0,n,2)-1)2^n,0}- \sum_{t=1}^n c^{\phi(r,0,t,2)+1,0}_t h^{\psi(r,0,t,2),t}_{n+1} 
\end{equation}

has at worst logarithmic poles at $\PP$. By expanding (\ref{easy gauge computation ec}) in a local parameter $t$ at $\PP$, we can compute $h^{r,0}_{n+1}$ locally as the formal integral of \[\sum_{t=1}^n c^{\phi(r,0,t,2)+1,0}_t h^{\psi(r,0,t,2),t}_{n+1} + \alpha_{\psi(r,0,n,2)-1}h_n^{r-(\psi(r,0,n,2)-1)2^n,0}. \] In the following algorithm we compute the gauge transformations iteratively. 
\end{remark}

\begin{algorithm} \label{Algorithm - Computing GT on UC of EC}
(Computing the logarithmic extension of $\U_n$ on elliptic curves) \newline 

\noindent Input:
\begin{itemize}
\item Elliptic curve $C$ over a characteristic $0$ field $K$ with affine model $X$ of the form $y^2=f(x)$ with $f(x) \in K[x]$ and $\deg f =3$.
\item Universal connection $\U= \lbrace (\U_n,1) \rbrace$ on $X$ with respect to the basis $\alpha_0,\alpha_1$ of $H^1_{dr}(X)$ as at the beginning of Section \ref{Subsection - Logarithmic Extensions on EC}.
\item The connection matrix $C_n'$ over open $Y \subset C$ of the logarithmic extension $\UU_n$ of $\U_n$ .
\item The gauge transformation $G_n$ defining the extension $\UU_n$ with respect to the basis $\B_n$ .
\end{itemize} 
Output:

\begin{itemize}
\item The connection matrix $C_{n+1}'$ over open $Y \subset C$ of the logarithmic extension $\UU_{n+1}$ of $\U_{n+1}$.
\item The gauge transformation $G_{n+1}$ defining the extension $\UU_{n+1}$ with respect to the basis $\B_{n+1}$.
\end{itemize}

\noindent Algorithm:

\begin{enumerate}
\item[I] If $n=0$ then 

\begin{enumerate}
\item[(1)] Take $h^{1,0}_1 \in K(C)$ regular at $\PP$.
\item[(2)] Take $h^{2,0}_1 \in K(C)$ with a single simple pole at $\PP$ such that $dh^{2,0}_1 - \alpha_1$ has at worst a logarithmic pole at $\PP$.
\item[(3)] Define 
\begin{align*}
& G_1 = \left( \begin{array}{ccc}
			1 & 0 & h^{1,0}_1 \\
			0 & 1 & h^{2,0}_1 \\
			0 & 0 & 1 \\
		\end{array} \right) \\ 
& C'_1= \left( 	\begin{array}{ccc}
			0 & 0 & -\alpha_0 +dh^{1,0}_1 \\
			0 & 0 & -\alpha_1+dh^{2,0}_1 \\
			0 & 0 & 0 \\
		\end{array} \right)=\left( 	\begin{array}{ccc}
			0 & 0 & c^{1,0}_1 \\
			0 & 0 & c^{2,0}_1 \\
			0 & 0 & 0 \\
		\end{array} \right).
\end{align*}
\end{enumerate}
\item[II] Else
\begin{enumerate}
\item[(1)] For $0 <i \leq n$, $1 \leq r \leq 2^{n+1-i}$ define: 
\begin{enumerate}
\item $h^{r,i}_{n+1} := h^{r,i-1}_n$
\item $c^{r,i}_{n+1} := c^{r,i-1}_n$
\end{enumerate}
\item[(2)] For $i=0$, $1 \leq r \leq 2^{n+1}$ take $h^{r,0}_{n+1} \in K(C)$ polynomial in $h^{2,0}_1$ such that 

\begin{equation} \label{algorithm - differential ec}
dh^{r,0}_{n+1}- \alpha_{\psi(r,0,n,2)-1}h_n^{r-(\psi(r,0,n,2)-1)2^n,0}- \sum_{t=1}^n c^{\phi(r,0,t,2)+1,0}_t h^{\psi(r,0,t,2),t}_{n+1} 
\end{equation}

has at worst logarithmic poles at $\PP$. Define $c^{r,0}_{n+1}:=$ (\ref{algorithm - differential ec}).  
\item[(3)] For $0 \leq i \leq n$, $1 \leq r \leq 2^{n+1-i}$ let $H^{r,i}_{n+1}:= h^{r,i}_{n+1}I_{2^i}$. Define $H_{n+1}$ to be the matrix  

\[H_{n+1} = \left( \begin{array}{ccccc}
						H^{1,n}_{n+1}& \hdots & H^{1,i}_{n+1} & \hdots & H^{1,0}_n \\
						\vdots & & \vdots & & \vdots \\
						H^{2,n}_{n+1}& \hdots & H^{2^{n+1-i},i}_{n+1} & \hdots & H^{2^{n+1},0}_{n+1} \end{array} \right)\]and define 

\[
G_{n+1} :=  \left( \begin{array}{cc}
						I & H_{n+1} \\
						0  & G_{n} \end{array} \right).
\]

\item[(4)] For $0 \leq i \leq n$, $1 \leq r \leq 2^{n+1-i}$ let $C^{r,i}_{n+1}:= c^{r,i}_{n+1}I_{2^i}$. Define $D'_{n+1}$ to be the matrix  

\[D'_{n+1} = \left( \begin{array}{ccccc}
			C^{1,n}_{n+1}& \hdots & C^{1,i}_{n+1} & \hdots & C^{1,0}_{n+1} \\
			\vdots & & \vdots & & \vdots \\
			C^{2,n}_{n+1} & \hdots & C^{2^{n+1-i},i}_{n+1} & \hdots& C^{2^{n+1},0}_{n+1} \end{array} \right)\] and define

\[
C'_{n+1} :=  \left( \begin{array}{cc}
						0 & D'_{n+1} \\
						0  & C'_{n} \end{array} \right)
\] 
\end{enumerate}
\end{enumerate}
\end{algorithm}

We end this section with an application of Algorithm \ref{Algorithm - Computing GT on UC of EC} to successively compute the extension of $\U_n$ on $X$ to $\UU_n$ on $C$ for $n=2,3$ and $4$. For brevity, we describe only the gauge transformations $G_n$ computed by the algorithm and in level $4$ we only describe the image of $1$ under the gauge transformation $G_4$. However, using the above this is clearly sufficient to describe the extensions. 
 
\begin{proposition} \label{Proposition - GT for EC Level 2}
The gauge transformation $G_2$ computed by Algorithm \ref{Algorithm - Computing GT on UC of EC} and extending $G_1$ as computed in Example \ref{Example - Level 1 Extension EC} is
\[
G_2 = \left( 	\begin{array}{cc}
					I& H_2 \\
					0 & G_1 
				\end{array}\right), \; \; H_2 = \left( 	\begin{array}{ccc}
					0 & 0 & 0 \\
					0 & 0 & 0 \\
					F & 0 & 0 \\
					0 & F & \frac{1}{2}F^2 
				\end{array} \right). 
\]
\end{proposition}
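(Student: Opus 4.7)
The plan is to apply Algorithm~\ref{Algorithm - Computing GT on UC of EC} directly at level $n=1$, using as input the level-$1$ data computed in Example~\ref{Example - Level 1 Extension EC}. There we chose $h^{1,0}_1 = 0$ and $h^{2,0}_1 = F \in K(C)$, with $F$ having a simple pole at $\PP$ and $dF - \alpha_1$ regular at $\PP$, which produces $c^{1,0}_1 = -\alpha_0$ and $c^{2,0}_1 = -\alpha_1 + dF$. Step II(1) of the algorithm then transports these to level $2$ by the relabelling $h^{r,1}_2 := h^{r,0}_1$ and $c^{r,1}_2 := c^{r,0}_1$ for $r=1,2$, so the first two columns of $H_2$ are filled by the $2\times 2$ blocks $H^{1,1}_2 = 0$ and $H^{2,1}_2 = F\, I_2$, matching the lower-left $F I_2$ pattern of the stated $H_2$.

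The bulk of the argument is step II(2), which requires solving four instances of equation~(\ref{algorithm - differential ec}) with $n=1$ for $r=1,\ldots,4$. I would first evaluate $\psi(r,0,1,2)$ and $\phi(r,0,1,2)$ from Definition~\ref{Definition - Auxiliary Functions} directly: one gets $(\psi,\phi)=(1,0),(1,1),(2,0),(2,1)$ for $r=1,2,3,4$. Substituting into (\ref{algorithm - differential ec}) reduces the four expressions to $dh^{1,0}_2$, $dh^{2,0}_2 - \alpha_0 F$, $dh^{3,0}_2 + \alpha_0 F$, and $dh^{4,0}_2 - \alpha_1 F - (-\alpha_1 + dF)F$. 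Since $\alpha_0 \in H^0(C,\Omega^1_C)$ is regular at $\PP$ while $F$ has only a simple pole there, the products $\pm\alpha_0 F$ already have at worst logarithmic poles at $\PP$, and the choices $h^{1,0}_2 = h^{2,0}_2 = h^{3,0}_2 = 0$ suffice. For $r=4$ the two $\alpha_1 F$ terms cancel and the expression collapses to $dh^{4,0}_2 - F\,dF = dh^{4,0}_2 - d(\tfrac12 F^2)$, so $h^{4,0}_2 = \tfrac12 F^2$ is forced (up to an additive function regular at $\PP$, which we take to be zero).

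Assembling $H_2$ from the blocks $H^{r,1}_2 = h^{r,1}_2 I_2$ for $r=1,2$ and $H^{r,0}_2 = h^{r,0}_2$ for $r=1,\ldots,4$, and substituting into step II(3), then yields the matrix asserted in the proposition. The only delicate point in the whole argument is the $r=4$ case: because $\alpha_1$ has a genuine pole of order $2$ at $\PP$, no choice of $h^{4,0}_2 \in K(C)$ could absorb an uncancelled $\alpha_1 F$ contribution into a log pole, so the success of the step hinges on the exact cancellation of the two $\alpha_1 F$ terms, which is built into the level-$1$ data through the particular form $c^{2,0}_1 = -\alpha_1 + dF$. Once that cancellation is recorded, the rest is a bookkeeping exercise in the indices $\psi,\phi$.
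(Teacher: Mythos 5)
Your proposal is correct and follows essentially the same route as the paper's own proof: it transports the level-$1$ data via Step II(1), evaluates $\psi(r,0,1,2)$ and $\phi(r,0,1,2)$ to reduce equation (\ref{algorithm - differential ec}) to the same four expressions $dh^{1,0}_2$, $dh^{2,0}_2 - F\alpha_0$, $dh^{3,0}_2 + F\alpha_0$, $dh^{4,0}_2 - F\,dF$, and makes the same choices $h^{1,0}_2=h^{2,0}_2=h^{3,0}_2=0$, $h^{4,0}_2=\tfrac12 F^2$. Your added remark that the $r=4$ case succeeds only because the $\alpha_1 F$ terms cancel is accurate and consistent with the paper's computation.
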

\begin{proof}
In Example \ref{Example - Level 1 Extension EC} we computed the extension of $\U_1$ to $\UU_1$ and saw that we may define

\[
G_1 = \left( \begin{array}{ccc}
				1 & 0 & 0 \\
				0 & 1 & F\\
				0 & 0 & 1
				\end{array} \right)
\] for some $F \in K(C)$ with a simple pole at $\PP$. In the notation of Theorem \ref{Theorem - Compute GT on UC of EC} we have
\begin{align*}
& h^{1,0}_1=0, \; \; h^{2,0}_1=F \\  &c^{1,0}_1=-\alpha_0, \; \; c^{2,0}_1=-\alpha_1+dF. \\
\end{align*} Applying Algorithm \ref{Algorithm - Computing GT on UC of EC} with $n=1$ then as in Step II.1) we define \begin{align*}
& h^{1,1}_2=0, \; \; h^{2,1}_2=F \\  &c^{1,1}_2=-\alpha_0, \; \; c^{2,1}_2=-\alpha_1+dF. \\
\end{align*} Following Step (II)(2) for $r=1,2,3,4$  we need to compute $h^{r,0}_2$ polynomial in $F$  such that

\[
c^{r,0}_2=dh^{r,0}_{2}- \alpha_{\psi(r,0,1,2)-1}h_1^{r-(\psi(r,0,1,2)-1)2,0}- c^{\phi(r,0,1,2)+1,0}_1 h^{\psi(r,0,1,2),1}_{2} 
\] has at worst logarithmic poles at $\PP$. Here we find that  

\[
\begin{split}
r=1 \: &: \: c^{1,0}_2 = dh^{1,0}_2 \\
r=2 \: &: \: c^{2,0}_2 = dh^{2,0}_2 - F\alpha_0\\
r=3 \: &: \: c^{3,0}_2 = dh^{3,0}_2 + F\alpha_0\\
r=4 \: &: \: c^{4,0}_2 = dh^{4,0}_2 - F\alpha_1 -(-\alpha_1+dF)F = dh^{4,0}_2 - FdF.\\
\end{split}
\]

Since $\alpha_0$ is regular at $\PP$ and $F$ has a simple pole there we can take $h^{1,0}_2=h^{2,0}_2=h^{3,0}_2=0$ and also take $h^{4,0}_2= \frac{1}{2}F^2$. Therefore, we find that we can take the matrix $G_2$ to be 

\[
G_2 = \left( 	\begin{array}{cc}
					I& H_2 \\
					0 & G_1 
				\end{array}\right), \; \; H_2 = \left( 	\begin{array}{ccc}
					0 & 0 & 0 \\
					0 & 0 & 0 \\
					F & 0 & 0 \\
					0 & F & \frac{1}{2}F^2 
				\end{array} \right) 
\]

and then $C_2'$, the gauge transformation of $C_2$ by $G_2$ is 

\[
C_2' = \left( 	\begin{array}{cc}
					0& D'_2 \\
					0& C'_1
				\end{array}\right), \; \; D'_2 = \left( 	\begin{array}{ccc}
					-\alpha_0 & 0 & 0 \\
					0 & -\alpha_0 & -F \alpha_0 \\
					\alpha_1' & 0 & F \alpha_0 \\
					0 & \alpha_1' & 0					
				\end{array} \right) 
\]

where $\alpha_1'=-\alpha_1+dF$. 
\end{proof}

\begin{proposition} \label{Proposition - GT on UC of EC Level 3}
The gauge transformation $G_3$ computed by Algorithm \ref{Algorithm - Computing GT on UC of EC} and extending $G_2$ as computed in Proposition  \ref{Proposition - GT for EC Level 2} is

\[
G_3=\left( 	\begin{array}{cc}
					I_{8 \times 8} & H_4 \\
					0_{7 \times 8} & G_2 
				\end{array}\right), \; \;
H_3 = \left( \begin{array}{ccccccc}
				0 & 0 & 0 & 0 & 0 & 0 & 0 \\
				0 & 0 & 0 & 0 & 0 & 0 & 0 \\
				0 & 0 & 0 & 0 & 0 & 0 & 0 \\
				0 & 0 & 0 & 0 & 0 & 0 & \lambda F \\
								F & 0 & 0 & 0 & 0 & 0 & 0 \\
				0 & F & 0 & 0 & 0 & 0 & -2 \lambda F \\
				0 & 0 & F & 0 & \frac{1}{2}F^2 & 0 & \lambda F \\
				0 & 0 & 0 & F & 0 & \frac{1}{2}F^2 & \frac{1}{6}F^3 \end{array} \right)
\]
where $\lambda \in K$ is such that $\lambda dF-\frac{1}{2}F^2 \alpha_0$ has a logarithmic pole at $\PP$. 
\end{proposition}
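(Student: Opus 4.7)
The plan is to apply Algorithm \ref{Algorithm - Computing GT on UC of EC} with $n=2$, taking as input the extension $\UU_2$ computed in Proposition \ref{Proposition - GT for EC Level 2}. The algorithm is entirely mechanical once the input data is recorded, so the proof reduces to organized bookkeeping in two stages.

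First, by Step II.(1), the entries $h^{r,i}_3$ for $i \geq 1$ are inherited directly from level $2$: namely $h^{r,2}_3 = h^{r,1}_2$ and $h^{r,1}_3 = h^{r,0}_2$, which by Proposition \ref{Proposition - GT for EC Level 2} gives $h^{1,2}_3 = 0$, $h^{2,2}_3 = F$, $h^{1,1}_3 = h^{2,1}_3 = h^{3,1}_3 = 0$, and $h^{4,1}_3 = \tfrac{1}{2}F^2$. Filling in the corresponding identity blocks produces the first two block-columns of $H_3$ immediately, matching the claimed matrix.

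The substantive work is Step II.(2): for each $r = 1, \ldots, 8$, compute the values $\psi(r,0,2,2)$ and $\psi(r,0,t,2), \phi(r,0,t,2)$ for $t = 1, 2$, and substitute into (\ref{algorithm - differential ec}) together with the known data $c^{1,0}_1 = -\alpha_0$, $c^{2,0}_1 = dF - \alpha_1$, and the $c^{r,0}_2$ recorded in the proof of the previous proposition (in particular $c^{1,0}_2 = 0$, $c^{2,0}_2 = -F\alpha_0$, $c^{3,0}_2 = F\alpha_0$, $c^{4,0}_2 = 0$, the last because $h^{4,0}_2 = \tfrac{1}{2}F^2$ exactly satisfies the relevant differential relation). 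For $r \in \{1,2,3,5\}$ every term other than $dh^{r,0}_3$ vanishes, so the choice $h^{r,0}_3 = 0$ works. The nontrivial cases are:
\begin{itemize}
\item $r=4$ and $r=7$: after simplification the expression is $dh^{r,0}_3 - \tfrac{1}{2}F^2\alpha_0$, so we take $h^{r,0}_3 = \lambda F$ with $\lambda \in K$ chosen so that $\lambda dF - \tfrac{1}{2}F^2 \alpha_0$ has at most a logarithmic pole at $\PP$;
\item $r=6$: we obtain $dh^{6,0}_3 + F^2 \alpha_0$, which (after scaling by $-2$) is forced by the $r=4$ normalisation to have $h^{6,0}_3 = -2\lambda F$;
\item $r=8$: the $\alpha_1$-contributions cancel and the remainder collapses to $dh^{8,0}_3 - \tfrac{1}{2}F^2 dF = d\bigl(h^{8,0}_3 - \tfrac{1}{6}F^3\bigr)$, so $h^{8,0}_3 = \tfrac{1}{6}F^3$ kills the pole.
\end{itemize}
Assembling the blocks $H^{r,i}_3 = h^{r,i}_3 I_{2^i}$ as in Step II.(3) and inserting $H_3$ into the standard form $G_3 = \begin{pmatrix} I & H_3 \\ 0 & G_2 \end{pmatrix}$ recovers exactly the matrix in the statement.

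The only conceptual obstacle is the indexing bookkeeping: one must correctly align the words $w^k_\ell$ with the block positions through $\psi$ and $\phi$ and track which $c^{s,0}_t$ from the previous level contributes to each $r$. Once this is in hand, the pole-cancellation step is routine because $F$ has a single simple pole at $\PP$ and $\alpha_0$ is regular there, making the local orders of $F^k \alpha_0$ and $F^k dF$ straightforward to match by a single-term polynomial in $F$.
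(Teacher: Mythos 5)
Your proposal is correct and is exactly the "straightforward application of Algorithm \ref{Algorithm - Computing GT on UC of EC}" that the paper's one-line proof asserts: the inherited blocks $h^{r,i}_3$ for $i\geq 1$, the vanishing cases $r\in\{1,2,3,5\}$, the expressions $dh^{r,0}_3-\tfrac{1}{2}F^2\alpha_0$ for $r=4,7$, $dh^{6,0}_3+F^2\alpha_0$ for $r=6$, and the exact cancellation $dh^{8,0}_3-\tfrac{1}{2}F^2\,dF$ for $r=8$ all check out against (\ref{algorithm - differential ec}) with the $\psi,\phi$ values and the level-$2$ data $c^{r,0}_2$. No gaps.
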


\begin{proof}
This is a straightforward application of Algorithm \ref{Algorithm - Computing GT on UC of EC} making use of the extension $\UU_2$ computed in Proposition \ref{Proposition - GT for EC Level 2}.
\end{proof}

\begin{proposition} \label{Proposition - GT on UC of EC Level 4}
The gauge transformation $G_4$ computed by Algorithm \ref{Algorithm - Computing GT on UC of EC} and extending $G_2$ as computed in Proposition  \ref{Proposition - GT on UC of EC Level 3} is such that

\begin{align*}
G_4(1) =& (\frac{1}{6}\lambda F^2 + \mu F)A_0A_1^3 + (\frac{1}{2}\lambda F^2 -3 \mu F)A_1A_0A_1^2 +( 3\mu F-\frac{3}{2}\lambda F^2)A_1^2A_0A_1\\& +(\frac{5}{6}\lambda F^2 - \mu F)A_1^3A_0 + \frac{1}{24}F^4 A_1^4 + \text{words of length } \leq 3
\end{align*}
where $\mu \in K$ is such that $\frac{1}{3}\lambda F dF + \mu dF - \frac{1}{6}F^3 \alpha_0$ has at worst a simple pole at $\PP$. 
\end{proposition}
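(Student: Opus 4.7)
The plan is to apply Algorithm \ref{Algorithm - Computing GT on UC of EC} at level $n = 3$ with input the extension $\UU_3$ furnished by Proposition \ref{Proposition - GT on UC of EC Level 3}. Step II.1 of the algorithm declares $h^{r,i}_4 = h^{r,i-1}_3$ and $c^{r,i}_4 = c^{r,i-1}_3$ for all $i \geq 1$, so every column block of $H_4$ other than the final one is inherited verbatim from $H_3$. The only new data to be determined are the sixteen top scalars $h^{r,0}_4 \in K(C)$ of the last column of $G_4$; since the proposition displays $G_4(1)$ only modulo words of length $\leq 3$, we need only exhibit $h^{r,0}_4$ for the five indices $r \in \{8, 12, 14, 15, 16\}$ corresponding respectively to the words $A_0 A_1^3$, $A_1 A_0 A_1^2$, $A_1^2 A_0 A_1$, $A_1^3 A_0$, $A_1^4$.

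For each such $r$, Step II.2 requires $h^{r,0}_4$ polynomial in $F$ chosen so that the expression (\ref{algorithm - differential ec}) has at worst a logarithmic pole at $\PP$. I would evaluate $\psi(r,0,t,2)$ and $\phi(r,0,t,2)$ for $t = 1, 2, 3$ and substitute the known inputs: the entries $h^{\cdot,0}_3 \in \{0, \lambda F, -2\lambda F, \lambda F, \frac{1}{6}F^3\}$ from Proposition \ref{Proposition - GT on UC of EC Level 3}, the forms $c^{2,0}_1 = -\alpha_1 + dF$ and $c^{\cdot,0}_2$ from Proposition \ref{Proposition - GT for EC Level 2}, and $c^{\cdot,0}_3$ which arises as a by-product of Proposition \ref{Proposition - GT on UC of EC Level 3} (in particular $c^{8,0}_3 = 0$ because $h^{8,0}_3 = \frac{1}{6}F^3$ already primitivises $\frac{1}{2}F^2\,dF$). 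After collecting like terms, each of the five target expressions collapses to a linear combination of $F^k\,dF$, $F^k \alpha_0$, and $F \cdot c^{2,0}_1$. The $\lambda$-relation of Proposition \ref{Proposition - GT on UC of EC Level 3} rewrites $\frac{1}{2}F^2\alpha_0$ modulo a logarithmic form as $\lambda\,dF$, and the $\mu$-relation introduced here rewrites $\frac{1}{6}F^3\alpha_0$ modulo a logarithmic form as $d(\frac{1}{6}\lambda F^2 + \mu F)$; combining these with the exact forms $d(F^k/k)$, one reads off $h^{16,0}_4 = \frac{1}{24}F^4$, $h^{8,0}_4 = \frac{1}{6}\lambda F^2 + \mu F$, and analogous polynomial-in-$F$ expressions for $r = 12, 14, 15$ with the coefficients displayed in the statement.

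The main technical obstacle is the combinatorial bookkeeping of the three sub-terms ($t = 1, 2, 3$) in (\ref{algorithm - differential ec}) for each of the five target indices: one must correctly apply the index functions $\psi$ and $\phi$, locate the corresponding non-zero entries in the sparse pattern of $H_3$, and verify that each proposed $h^{r,0}_4$ simultaneously kills all non-logarithmic pole orders. The binomial pattern $(1, -3, 3, -1)$ of the $\mu F$-coefficients and the compatible $\lambda F^2$-weights $(\frac{1}{6}, \frac{1}{2}, -\frac{3}{2}, \frac{5}{6})$ across $r = 8, 12, 14, 15$ emerge from the signed contributions of the $\pm \lambda F$ entries of $H_3$ feeding into the $t = 1$ sub-term, combined with the $\frac{1}{2}F^2$ entry of $H_3$ that enters through the $t = 2$ sub-term. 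Once the relations defining $\lambda$ and $\mu$ are invoked and the standard second-kind representative $\alpha_1 = x\,dx/(2y)$ is used (so that $c^{2,0}_1$ has no residue at $\PP$ and $F \cdot c^{2,0}_1$ is already logarithmic), each claim is verified by direct substitution.
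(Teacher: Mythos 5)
Your proposal is correct and follows essentially the paper's own (one-line) proof: iterate Algorithm \ref{Algorithm - Computing GT on UC of EC} at $n=3$ from the level-$3$ data, inheriting all blocks except the last column and determining the new $h^{r,0}_4$ via (\ref{algorithm - differential ec}) together with the defining relations for $\lambda$ and $\mu$; your index identifications $r\in\{8,12,14,15,16\}$ and the displayed coefficients all check out on direct computation. Two small tidy-ups: you should also record that the remaining eleven $h^{r,0}_4$ may be taken to be $0$ (so the displayed length-$4$ part of $G_4(1)$ is complete), and no special representative of $\alpha_1$ is needed since $F\,c^{2,0}_1$ is automatically logarithmic because $c^{2,0}_1=-\alpha_1+dF$ is regular at $\PP$ by the choice of $F$ in Example \ref{Example - Level 1 Extension EC} (also note the cubic-pole terms do not always enter through the $t=2$ sub-term: for $r=12$ they arrive via $c^{4,0}_3$ in the $t=3$ sub-term, and for $r=8$ via the $\alpha$-term with $h^{8,0}_3=\frac{1}{6}F^3$).
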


\begin{proof}
Again this is simply an application of Algorithm \ref{Algorithm - Computing GT on UC of EC} lifting the extension computed in Proposition \ref{Proposition - GT on UC of EC Level 3}. 
  \end{proof}

\begin{remark}
We may question whether the constants $\lambda$ and $\mu$ have any dependence on the choice of $F$. In fact they depend only on the choice of differentials $\alpha_0,\alpha_1$ as can be seen by expanding the differentials and $F$ in a local parameter at $\PP$. It is a simple calculation then to verify the independence from $F$. 
\end{remark}

\subsection{Logarithmic extensions on affine hyperelliptic curves} \label{Subsection - Logarithmic Extensions on HEC}

Let $C$ be an odd hyperelliptic curve of genus $g \geq 2$ over a field $K$ of characteristic $0$. Say that we have an affine model of $C$ of the form 

\[
y^2 = f(x), \; f(x) \in K[x]
\] with $\deg(f) =2g+1$. There is a single $K$-rational point $\PP$ at infinity. Let $X$ be the affine curve $y^2 = f(x)$ over $K$ so that $X=C-\lbrace \PP \rbrace$. Then $H^1_{dr}(X/K)$ has a $K$-basis of size $2g$. Specialise Definition \ref{Definition - UC on General Affine Curve} to $X$, taking $\alpha_0,\alpha_1, ... \alpha_{2g-1} \in H^0(X, \Omega^1_X)$ be 1-forms on $X$ such that their cohomology classes form a $K$-basis for $H^1_{dr}(X/K)$. Since $C$ is of genus $g$, we may further assume that $\alpha_0,.., \alpha_{g-1}$ is a $K$-basis of $H^0(C, \Omega^1_C)$. 

As in Section \ref{Subsection - Logarithmic Extensions on EC} we take as a basis \index{$\B_n$}$\B_n$ for $R_n$ the one given by the graded lexicographic order such that $A_0<A_1<..<A_{2g-1}<1$. If the $k$-th word of length $l$ is \index{$w^k_l$}$w^k_l$ with respect to this basis then we easily conclude that $A_i w^k_l = w^{(2g)^li+k}_{l+1}$. So we can describe the action of $\nabla$ on a basis for $R_n$:

\[
\nabla( w^k_l) =	\begin{cases} 
						-\sum_{i=0}^{2g-1}  A_i w^k_l \alpha_i = -\sum_{i=0}^{2g-1}  w^{(2g)^li+k}_{l+1}\alpha_i & \text{ if } l \leq n-1 \\ 
						0 & \text{ if } l=n 
					\end{cases}
\] Given this it is simple to prove the following lemma.

\begin{lemma}
The connection matrix of $\U_0$ is the zero matrix. If $C_n$ is the connection matrix of $\U_n$ with respect to the basis $\B_n$, then  

\[
C_{n+1}=\left(	\begin{array}{cc}
					0_{(2g)^{n+1} \times (2g)^{n+1}} & D_{n+1} \\
					0_{\frac{(2g)^{n+1}-1}{2g-1} \times (2g)^{n+1}} & C_n 
				\end{array}\right)
\]

is the connection matrix of $\U_{n+1}$ with respect to the basis $\B_{n+1}$ where 

\[
D_{n+1} = 	\left(	\begin{array}{cc}
					-\alpha_0I_{(2g)^n} & 0 \\
					\vdots & \vdots \\
					-\alpha_{2g-1}I_{(2g)^n} & 0
				\end{array}\right).
\]

\end{lemma}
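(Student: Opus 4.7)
The proof is a direct computation, exactly parallel to Lemma \ref{Lemma - Connection Matrix UC} for elliptic curves, using the formula for $\nabla(w^k_l)$ displayed just above the statement. The first claim is immediate: $\U_0 = R_0 \otimes \O_X = \O_X$ with the trivial connection $d$, so its connection matrix is zero.

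For the block structure of $C_{n+1}$, I would organise the computation around the graded lexicographic ordering of $\B_{n+1}$, under which the $(2g)^{n+1}$ words of length $n+1$ are indexed first, followed by the $\frac{(2g)^{n+1}-1}{2g-1}$ words of length at most $n$ in the same order as in $\B_n$. By the convention set out in the introduction to connection matrices, the column of $C_{n+1}$ indexed by a basis element $e$ records the coefficients of $\nabla(e)$ in the basis $\B_{n+1}$. The first $(2g)^{n+1}$ columns vanish, because every length-$(n+1)$ word $w$ satisfies $\nabla(w) = 0$ by definition of $R_{n+1}$; this accounts for the two left blocks of zeros.

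For the remaining columns, indexed by words of length $l \leq n$, the formula $\nabla(w^k_l) = -\sum_{i=0}^{2g-1} w^{(2g)^l i + k}_{l+1}\alpha_i$ produces only length-$(l+1)$ words. When $l < n$, those target words still have length $\leq n$ and therefore sit in the rows of the bottom block; moreover the action of $\nabla$ on these basis elements of $\B_{n+1}$ is literally the same as its action inside $\B_n$, so the bottom-right block is exactly $C_n$. When $l = n$, the target words $w^{(2g)^n i + k}_{n+1}$ lie in the top block, which reduces the claim about $D_{n+1}$ to checking that $w^{(2g)^n i + k}_{n+1}$ occupies row $(2g)^n i + k$ of that block. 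This is precisely what the graded lex ordering with $A_0 < A_1 < \cdots < A_{2g-1}$ delivers: a length-$(n+1)$ word sorts first by its leading letter $A_i$, grouping such words into $2g$ consecutive blocks of size $(2g)^n$, and then by its length-$n$ tail within each group. Consequently, fixing $i$ and letting $k$ range over $1,\ldots,(2g)^n$ places $-\alpha_i$ along a diagonal of size $(2g)^n$ inside the $i$-th block, which is exactly the structure $-\alpha_i I_{(2g)^n}$ stacked for $i = 0, \ldots, 2g-1$ that defines $D_{n+1}$, with trailing zero columns corresponding to the length-$<n$ basis elements that contribute nothing to the top rows.

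The only point requiring care is the combinatorial bookkeeping of the graded lex order in the last step; beyond that the result is essentially a restatement of the definition of $\nabla$, and no new ideas beyond those used in the elliptic case are needed.
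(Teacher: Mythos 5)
Your proof is correct and is exactly the "straightforward calculation" the paper leaves to the reader (the paper gives no written proof beyond noting it follows from the displayed formula for $\nabla(w^k_l)$ and the graded lex ordering, just as in the elliptic case). Your bookkeeping of the ordering, in particular that $A_i w^k_n = w^{(2g)^n i+k}_{n+1}$ places $-\alpha_i$ along the diagonal of the $i$-th block of $D_{n+1}$, is precisely the point being used.
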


It is straightforward to prove the analogous versions of Theorem \ref{Theorem - Compute GT on UC of EC} and Lemma \ref{Lemma - Computing GT on UC of EC} for odd models of hyperelliptic curves. Hence we may iteratively compute the logarithmic extensions of $\U_n$ using the following algorithm:

\begin{algorithm} \label{Algorithm - Computing GT on UC of HEC}
(Computing the logarithmic extension of $\U_n$ on hyperelliptic curves) \newline 

\noindent Input:
\begin{itemize}
\item Hyperelliptic curve $C$ over a characteristic $0$ field $K$ of genus $g$ with affine model $X$ of the form $y^2=f(x)$ with $f(x) \in K[x]$ and $\deg f =2g+1$, $F \in K(C)$ fixed with a pole of order $1$ at $\PP$. 
\item Universal connection $\U= \lbrace (\U_n,1) \rbrace$ on $X$ with respect to the basis $\alpha_0,..,\alpha_{2g-1}$ of $H^1_{dr}(X)$ as above
\item The connection matrix $C_n'$ over open $Y \subset C$ of the logarithmic extension $\UU_n$ of $\U_n$ .
\item The gauge transformation $G_n$ defining the extension $\UU_n$ with respect to the basis $\B_n$ .
\end{itemize} 
Output:

\begin{itemize}
\item The connection matrix $C_{n+1}'$ over open $Y \subset C$ of the logarithmic extension $\UU_{n+1}$ of $\U_{n+1}$.
\item The gauge transformation $G_{n+1}$ defining the extension $\UU_{n+1}$ with respect to the basis $\B_{n+1}$.
\end{itemize}

\noindent Algorithm:

\begin{enumerate}
\item[I] If $n=0$ then 

\begin{enumerate}
\item[(1)] For $r=1,..,2g$ take $h^{r,0}_1 \in K(C)$ polynomial in $F$ such that $dh^{r,0}_1 - \alpha_{r-1}$ has at worst a logarithmic pole at $\PP$.
\item[(2)] For $r=1,..,2g$ define $c^{r,0}_1:= dh^{r,0}_1 - \alpha_{r-1}$
\item[(3)] Define 
\begin{align*}
& G_1 = \left( \begin{array}{cc}
			I_{2g} & H_1 \\
			0 & 1 \end{array} \right), \; \; H_{1} = 	\left(	\begin{array}{c}
					dh^{1,0}_1 \\
					\vdots \\
					dh^{r,0}_1
				\end{array}\right) \\ 
&
C'_1=\left(	\begin{array}{cc}
					0 & D_{1} \\
					0 & 0 
				\end{array}\right), \; \; D_{1} = 	\left(	\begin{array}{c}
					c^{1,0}_1  \\
					\vdots \\
					c^{2g,0}_1
				\end{array}\right). \\
\end{align*}

\end{enumerate}
\item[II] Else
\begin{enumerate}
\item[(1)] For $0 <i \leq n$, $1 \leq r \leq (2g)^{n+1-i}$ define: 
\begin{enumerate}
\item $h^{r,i}_{n+1} := h^{r,i-1}_n$
\item $c^{r,i}_{n+1} := c^{r,i-1}_n$
\end{enumerate}
\item[(2)] For $i=0$, $1 \leq r \leq (2g)^{n+1}$ take $h^{r,0}_{n+1} \in K(C)$ polynomial in $F$ such that 

\begin{equation} \label{algorithm - differential hec}
dh^{r,0}_{n+1}- \alpha_{\psi(r,0,n,2g)-1}h_n^{r-(\psi(r,0,n,2g)-1)(2g)^n,0}- \sum_{t=1}^n c^{\phi(r,0,t,2g)+1,0}_t h^{\psi(r,0,t,2g),t}_{n+1} 
\end{equation}

has at worst logarithmic poles at $\PP$. Define $c^{r,0}_{n+1}:= (\ref{algorithm - differential hec})$.  
\item[(3)] For $0 \leq i \leq n$, $1 \leq r \leq (2g)^{n+1-i}$ let $H^{r,i}_{n+1}:= h^{r,i}_{n+1}I_{(2g)^i}$. Define $H_{n+1}$ to be the matrix  

\[H_{n+1} = \left( \begin{array}{ccccc}
						H^{1,n}_{n+1}& \hdots & H^{1,i}_{n+1} & \hdots & H^{1,0}_n \\
						\vdots & & \vdots & & \vdots \\
						H^{2g,n}_{n+1}& \hdots & H^{(2g)^{n+1-i},i}_{n+1} & \hdots & H^{(2g)^{n+1},0}_{n+1} \end{array} \right)\]and define 

\[
G_{n+1} :=  \left( \begin{array}{cc}
						I & H_{n+1} \\
						0  & G_{n} \end{array} \right).
\]

\item[(4)] For $0 \leq i \leq n$, $1 \leq r \leq (2g)^{n+1-i}$ let $C^{r,i}_{n+1}:= c^{r,i}_{n+1}I_{(2g)^i}$. Define $D'_{n+1}$ to be the matrix  

\[D'_{n+1} = \left( \begin{array}{ccccc}
			C^{1,n}_{n+1}& \hdots & C^{1,i}_{n+1} & \hdots & C^{1,0}_{n+1} \\
			\vdots & & \vdots & & \vdots \\
			C^{2g,n}_{n+1} & \hdots & C^{(2g)^{n+1-i},i}_{n+1} & \hdots& C^{(2g)^{n+1},0}_{n+1} \end{array} \right)\] and define

\[
C'_{n+1} :=  \left( \begin{array}{cc}
						0 & D'_{n+1} \\
						0  & C'_{n} \end{array} \right).
\] 
\end{enumerate}
\end{enumerate}
\end{algorithm}

\begin{remark}
Suppose that we allow $g=1$ in the above algorithm. Then we recover the steps found in Algorithm \ref{Algorithm - Computing GT on UC of EC} and we may take Algorithm \ref{Algorithm - Computing GT on UC of HEC} as a general algorithm to compute universal logarithmic extensions of $\U_n$ for elliptic curves or odd hyperelliptic curves. 
\end{remark}
We conclude this section with one final result which will be of use to us in the next section. This will give us an explicit description of how the gauge transformation $G_n$ acts on a basis $\B_n$ for $R_n$, the words in $A_0,..,A_{2g-1}$ of length at most $n$. 

\begin{lemma}\label{Lemma - Action of G_n on B_n}
Let $\B_n$ be the basis of $R_n$ consisting of words in $A_0,..,A_{2g-1}$ of length at most $n$ ordered with the graded lexicographic ordering such that $A_i < A_j$ if $i<j$. Let $G_n$ be the gauge transformation computed by Algorithm \ref{Algorithm - Computing GT on UC of HEC}. Then $G_n$ acts on $w^k_l$ as follows:

\[
G_{n}: w^k_l \mapsto \begin{cases} 	w^k_l & \text{if } l=n \\
										w^k_l+\sum_{s=l+1}^{n} \sum_{t=1}^{(2g)^{s-l}} w_s^{k+(t-1)(2g)^l}h_s^{t,l} & \text{otherwise.}\end{cases}
\]
\end{lemma}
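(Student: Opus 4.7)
The plan is to proceed by induction on $n$, exploiting the block structure of $G_n$ established in Algorithm \ref{Algorithm - Computing GT on UC of HEC} together with the graded lexicographic ordering on $\B_n$ (the obvious $2g$-letter analogue of Remark \ref{basis uc ec}, under which the first block of basis elements consists of words of length $n$, followed by words of length $n-1$, and so on).

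For the base case $n=1$, Step~I of the algorithm gives
\[
G_1 = \begin{pmatrix} I_{2g} & H_1 \\ 0 & 1 \end{pmatrix},
\]
where $H_1$ is the column whose $r$-th entry is $h^{r,0}_1$. Reading off the columns one obtains $G_1(w^r_1) = w^r_1$ for $r=1,\ldots,2g$ (matching the case $l=n$) and $G_1(w^1_0) = w^1_0 + \sum_{t=1}^{2g} h^{t,0}_1\, w^t_1$, which is the claimed formula with $l=0$ and $s=1$.

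For the inductive step, assume the result for $G_n$. Step~II.3 writes
\[
G_{n+1} = \begin{pmatrix} I_{(2g)^{n+1}} & H_{n+1} \\ 0 & G_n \end{pmatrix},
\]
where the first block of rows/columns indexes words of length $n+1$ and the second block indexes words of length $\leq n$. When $l=n+1$, the identity block gives $G_{n+1}(w^k_l) = w^k_l$ at once. When $l \leq n$, the image $G_{n+1}(w^k_l)$ decomposes into a contribution from $G_n$ acting on the length-$\leq n$ subspace, which by the inductive hypothesis supplies exactly the terms with $s = l+1,\ldots,n$ in the target sum, plus a contribution from $H_{n+1}$ landing in the span of words of length $n+1$; this second piece is the only new content to compute.

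The extraction of the $H_{n+1}$-contribution is the computational core. The column-block of $H_{n+1}$ indexing length-$l$ inputs has $(2g)^l$ columns, and within it the $k$-th column picks up, from each row-block $H^{r,l}_{n+1} = h^{r,l}_{n+1} I_{(2g)^l}$, a single nonzero entry $h^{r,l}_{n+1}$ in position $k$. Under the graded lex ordering, the $k$-th row of the $r$-th row-block is the $(k+(r-1)(2g)^l)$-th word of length $n+1$, so the top contribution is
\[
\sum_{t=1}^{(2g)^{n+1-l}} h^{t,l}_{n+1}\, w^{k+(t-1)(2g)^l}_{n+1},
\]
which is precisely the $s = n+1$ term of the claimed sum. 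Combined with the inductive piece, this closes the induction.

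The only genuine obstacle is indexing bookkeeping: one must confirm that the $r$-th row-block of the length-$l$ column-block of $H_{n+1}$ corresponds to words of length $n+1$ whose first $n+1-l$ letters spell out the $r$-th word of length $n+1-l$, so that the index-shift appearing in $H_{n+1}$ is exactly $k \mapsto k+(r-1)(2g)^l$. This is a direct consequence of the ordering $A_iw^k_l = w^{(2g)^l i + k}_{l+1}$ noted at the start of Section \ref{Subsection - Logarithmic Extensions on HEC}, applied $n+1-l$ times; once this correspondence is pinned down the rest is purely mechanical.
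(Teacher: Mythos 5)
Your proof is correct and follows essentially the same route as the paper's: identify the column of $G$ corresponding to $w^k_l$, read off the contribution of the top block $H$ (which supplies the terms of maximal word length, with the index shift $k \mapsto k+(t-1)(2g)^l$ coming from the graded lexicographic ordering), and obtain the remaining terms $s=l+1,\ldots,n$ from the lower-right block by induction. The paper compresses the inductive step into one sentence, whereas you spell out the base case and the block decomposition explicitly, but the argument is the same.
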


\begin{proof}
Suppose first that $l=n$. Then by construction we have that $G_n(w^k_n)=w^k_n$ for all $k$. Otherwise, suppose that $l <n$. Using Algorithm \ref{Algorithm - Computing GT on UC of HEC} $G_n$ will be of the form 
\[G_{n}=\left( \begin{array}{cc}
						I & H_{n} \\
						0  & G_{n-1} \end{array} \right), \quad H_{n} = \left( \begin{array}{ccccc}
						h^{1,n-1}_{n}I_{(2g)^{n-1}}& \hdots & h^{1,i}_{n}I_{(2g)^i} & \hdots & h^{1,0}_{n}I_{1} \\
						\vdots & & \vdots & & \vdots \\
						h^{2g,n-1}_{n}I_{(2g)^{n-1}}& \hdots & h^{(2g)^{n-i},i}_{n}I_{(2g)^i} & \hdots & h^{(2g)^{n},i}_{n}I_1 \end{array} \right).\]
The word $w^k_l$ will correspond to column $(2g)^n+(2g)^{n-1}+...+(2g)^{l+1}+k$ in the matrix $G_n$. The submatrix $H_n$ will, therefore, contribute 

\[h^{1,l}_nw^k_n+ h^{2,l}_nw^{k+(2g)^l}_n+..+h^{(2g)^{n-l},l}_nw^{k+((2g)^{n-l}-1)(2g)^l,l}_n.
\]

By a simple induction argument the lemma then follows. 		
  \end{proof}

\section{The Hodge filtration on $\U$} \label{Section 4}

\subsection{Computation of the Hodge filtration}

In this section we shall utilise the computations of Section \ref{Section 3} to provide an explicit version of results due to Hadian in \cite{hadian11}. Hadian provides a characterisation of the Hodge filtration on the universal connection $\U$ of $X:=C-D$, where $C$ is a general smooth projective curve of genus $g$ over a field $K$ of characteristic $0$ and $D$ is a non-empty divisor defined over $K$ of size $r$. Fix a basepoint $b \in X(K)$. 

\begin{definition}
By a \textit{filtered logarithmic connection} \index{$(\mathcal{V}, \nabla, F^{\bullet})$}$\mathcal{V}:= (\mathcal{V}, \nabla, F^{\bullet})$ on $C$ with log poles along $D$ we mean a vector bundle $\mathcal{V}$ with a logarithmic connection $\nabla$ with log poles along $D$ which is equipped with a decreasing filtration  by sub-bundles

\[
\mathcal{V}=F^m \mathcal{V} \subset F^{m+1} \mathcal{V} \subset... \subset F^n \mathcal{V}=0
\] for some $m<n \in \Z$ satisfying the \textit{Griffiths transversality} property:

\[
\nabla (F^i \mathcal{V}) \subset F^{i-1}\mathcal{V} \otimes \Omega^1_{C/K}(D)
\] for all $i$.  
\end{definition}

\begin{remark} \label{Remark - Hodge filtration on trivial log connection}
Note that the trivial connection $\O_C=(\O_C,d)$ is given the trivial filtration $F^0(\O_C)=\O_C$, $F^1(\O_C)=0$.
\end{remark}

\begin{definition}
Let $\mathcal{U},\mathcal{V},\mathcal{W}$ be filtered logarithmic connections. An exact sequence of logarithmic connections

\[
0 \rightarrow \mathcal{U}\rightarrow\mathcal{V}\rightarrow\mathcal{W}\rightarrow  0
\] is an exact sequence of filtered logarithmic connections if for each $p$ we have an exact sequence of sub-bundles

\[
0 \rightarrow F^p\mathcal{U}\rightarrow F^p \mathcal{V}\rightarrow F^p\mathcal{W}\rightarrow 0
\]

\end{definition}

The dual space $V_{dr}$ in Definition \ref{Definition - UC on General Affine Curve} has a Hodge filtration induced by the natural trivial filtration on the de Rham complex of $X/K$, and this in turn induces a Hodge filtration on $V_{dr}^{\otimes n}$. Recall that $V_{dr}$ has basis $A_0,..,A_{2g+r-2}$ dual to differentials $\alpha_0,..,\alpha_{2g+r-2}$. We assume that the $\alpha_i$ are ordered so that $\alpha_0,..,\alpha_{g-1}$ form a $K$-basis for $H^0(C,\Omega^1_{C/K})$. As with elliptic and odd hyperelliptic curves we take as a $K$-basis of $V^{\otimes n}_{dr}$ the set \index{$\B_n$}$\B_n$ of words of length $n$ in $A_0,..,A_{2g+r-2}$ with graded lexicographic ordering such that $A_i < A_j$ if $i <j$.

\begin{definition} \label{Definition - Hodge Filtration on Vdr}
The filtration $F^{\bullet}$ on $V_{dr}^{\otimes n}$ is defined as follows:

For $p >0$

\[
F^pV_{dr}^{\otimes n} := 0.
\]For $p<0$ we let $\tilde{F}^p:= \lbrace w \in \B_n\; : \;  w \text{ contains at most }|p| \text{ occurrences of } A_0,..,A_{g-1} \rbrace$. Then 

\[
F^pV_{dr}^{\otimes n}:= \text{Span}_K(\tilde{F}^p). 
\]
\end{definition}

The natural filtration of $\O_C$ given by $F^0\O_C=\O_C$ and $F^1\O_C=0$ then together induce a filtration on $V_{dr}^{\otimes n} \otimes \O_C$ via the tensor product filtration. We may now state Hadian's Lemma in its full generality for $C$. 

\begin{lemma}[\cite{hadian11}, Lemma 3.6] \label{Lemma - Hadian's Lemma}\index{$F^{\bullet}\UU_n$} Let $C$ be a smooth projective curve over a field $K$ of characteristic $0$, $D$ a non-empty divisor and $X:=C-D$ and take $b \in X(K)$ a rational basepoint. Let $V_{dr}:=H^1_{dr}(X)^{\vee}$ and let $\U_n$ be the $n$-th finite level quotient of the universal connection on $X$ with respect to the basepoint $b$. Let $\UU_n$ be the extension of this to a logarithmic connection on $C$. Then there exists a filtration $(F^{\bullet} \UU_n)$ of vector bundles such that 

\begin{enumerate}
\item[i)] For all $n$ the filtration $F^{\bullet}$ on $\UU_n$ satisfies Griffiths transversality giving $\UU_n$ the structure of a filtered logarithmic connection. This filtration is unique up to automorphism of filtered logarithmic connections. 
\item[ii)] For all $n$ the exact sequence of logarithmic connections 

\[
0 \rightarrow V_{dr}^{\otimes n} \otimes \O_C \rightarrow \UU_n \rightarrow \UU_{n-1} \rightarrow 0
\] becomes an exact sequence of filtered logarithmic connections, where $V_{dr}^{\otimes n} \otimes \O_C$ has the Hodge filtration induced by the filtration on $V_{dr}^{\otimes n}$.

\item[iii)] The distinguished element $1 \in b^*\U_n$  belongs to the fibre $b^*F^0\UU_n$. 
\end{enumerate}
\end{lemma}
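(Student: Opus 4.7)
The plan is to proceed by induction on $n$, using the short exact sequence in condition (ii) as the essential structural input. For the base case $n=0$, take $F^0\UU_0 = \O_C$ and $F^i\UU_0 = 0$ for $i \geq 1$; all three conditions hold tautologically since $\nabla_0 = d$ and $1$ is a global section of $\O_C = F^0\O_C$. Assume $F^\bullet \UU_{n-1}$ has been built satisfying (i)--(iii). The goal is to define $F^p\UU_n$ as a sub-bundle of $\UU_n$ whose image under the projection $\UU_n \twoheadrightarrow \UU_{n-1}$ coincides with $F^p\UU_{n-1}$ and whose intersection with the kernel $V_{dr}^{\otimes n}\otimes \O_C$ equals $F^pV_{dr}^{\otimes n}\otimes \O_C$.

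To construct such a lift explicitly I would work with the descent presentation of $\UU_n$ provided by the data $(\U^i_n, G^{ij}_n)$ of Algorithm \ref{Algorithm - Computing GT on UC}. Over each trivialising open $Y^i_n$ the bundle $\UU_n|_{Y^i_n}$ has the word basis $\B_n$, and a first approximation to $F^p\UU_n|_{Y^i_n}$ is the span of words containing at most $|p|$ occurrences of the holomorphic letters $A_0,\ldots,A_{g-1}$, combined with an inductively chosen lift of $F^p\UU_{n-1}$. The block-triangular structure of the gauge transformations from Theorem \ref{Theorem - Compute GT on UC of EC} shows that the discrepancies between these local candidates under $G^{ij}_n$ lie in a sub-bundle of strictly lower filtration index; one then corrects by adding such terms until the local candidates glue. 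Griffiths transversality is verified on local trivialisations: if $w \in F^p$ is a word, then $\nabla(w) = -\sum_i A_iw\otimes \alpha_i$, where for $i < g$ the new letter drops the filtration index by one and $\alpha_i$ is already globally holomorphic, while for $i \geq g$ the letter leaves $w$ in the same filtration step but $\alpha_i$ has at worst a logarithmic pole along $D$, which is absorbed by $\Omega^1_C(D)$. In either case $\nabla(w)$ lands in $F^{p-1}\UU_n\otimes \Omega^1_C(D)$ up to correction terms from the chosen lift. Condition (iii) holds because $1$ is the empty word and lies in $F^0$ of every local trivialisation.

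Uniqueness up to automorphism of filtered logarithmic connections follows from a parallel induction: two filtrations satisfying (i)--(iii) and inducing the same data on $\UU_{n-1}$ and $V_{dr}^{\otimes n}\otimes \O_C$ differ by an automorphism of the middle term preserving the extension, and these automorphisms form a unipotent group acting transitively on the space of admissible lifts. The hard part will be simultaneously arranging the gluing, Griffiths transversality, and the basepoint condition while keeping the correction terms in $K(C)$ of the form required for compatibility with the descent datum built in Section \ref{Section 3}; the inductive lift of $F^p$ from $\UU_{n-1}$ must be adjusted by elements of $F^{p+1}V_{dr}^{\otimes n}\otimes \O_C$ in a way that is compatible both with the gauge transformations and with $\nabla$, and this is where the explicit block decomposition of the connection matrices becomes indispensable. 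I expect this bookkeeping, which grows rapidly with $n$, to be the principal obstacle; the author's constructive version in Theorem \ref{Theorem - Hodge Filtration on HEC or EC} appears to address precisely this issue for elliptic and odd hyperelliptic curves.
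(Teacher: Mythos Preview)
The paper does not prove this lemma; it is quoted from \cite{hadian11} (Lemma 3.6 there) and used as a black box. So there is no proof in the paper to compare your proposal against.

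That said, your inductive outline is broadly in the right spirit and in fact anticipates the paper's own constructive contribution: the paper's Theorem \ref{Theorem - Hodge Filtration on HEC or EC}, Lemma \ref{Lemma - Conditions on Lifts}, Lemma \ref{Lemma - Uniqueness of Lifts}, and Algorithm \ref{Algorithm - Compute HF on HEC or EC} together carry out, for elliptic and odd hyperelliptic curves, exactly the bookkeeping you describe---lifting generators of $F^p\UU_{n-1}$ through the short exact sequence, correcting by length-$n$ words so that the lifts glue under the gauge transformations $G_n$, and pinning down the remaining constants via the basepoint condition $1 \in b^*F^0\UU_n$. Your sketch of the Griffiths transversality check on the affine piece is correct but, as you note, the substance lies in showing that the corrected lifts still satisfy it after gluing; the paper handles this only implicitly through the explicit form of the generators it produces. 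Your uniqueness argument (two filtrations differ by a unipotent automorphism of the extension) is the standard one and matches Hadian's; the paper instead proves uniqueness directly in Lemma \ref{Lemma - Uniqueness of Lifts} by a pole-order analysis specific to the curves at hand.
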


We now describe the iterative method of computing the Hodge filtration at level $n$. This method is based on the application presented by Dogra in \cite[\S 4]{dogra15}. The idea is as follows: we want to compute sub-bundles $F^{\bullet} \U_n$ and $F^{\bullet} \U_n^i$ satisfying Griffiths transversality such that $(F^{\bullet} \U_n,F^{\bullet} \U_n^i, G_n^i)$ are the descent datum of sub-bundles $F^{\bullet} \UU_n$ of $\UU_n$ on $C$ satisfying the conditions of Lemma \ref{Lemma - Hadian's Lemma}. Note that it is clear by a simple induction that we must have $F^p\UU_n=0$ for $p>0$ using Definition \ref{Definition - Hodge Filtration on Vdr}, Remark \ref{Remark - Hodge filtration on trivial log connection} and Algorithm \ref{Algorithm - Compute HF on general UC}. 

The computation of this is contained in the following algorithm.

\begin{algorithm} \label{Algorithm - Compute HF on general UC}
(Computing Hodge filtration on logarithmic universal connection of $C$ with poles along $D$) \newline

\textbf{Input}:

\begin{itemize}
\item Smooth projective curve $C$ over field $K$ of characteristic $0$, a non-empty divisor $D$ defined over $K$, $X:=C-D$, and a basepoint $b \in X(K)$.
\item The logarithmic extension $\UU_n$ of $\U_n$ on $X$ computed by Algorithm \ref{Algorithm - Computing GT on UC}.
\item The Hodge filtration $F^{\bullet}\UU_{n}$ on $\UU_n$.
\end{itemize}

\textbf{Output}: 

\begin{itemize}
\item The Hodge filtration $F^{\bullet}\UU_{n+1}$ on $\UU_n$. 
\end{itemize}

\textbf{Algorithm}:

\begin{enumerate}
\item[(1)] For $p > 0$ put $F^p \UU_{n+1} =0$.

\item[(2)] For $p \leq 0$ do:

\begin{enumerate}
\item Lift generators of $F^p\U_{n}$ arbitrarily to sections of $\U_{n+1}$ over $X$ and adjoin the generators of $F^pV_{dr}^{\otimes (n+1)} \otimes \O_C$.
\item For all $i$ lift generators of $F^p\U^i_{n}$ to sections of $\U^i_{n+1}$ over $Y_i$ and adjoin the generators of $F^pV_{dr}^{\otimes (n+1)} \otimes \O_C$.
\item For all $i$:
\begin{enumerate}
\item[(i)] Compute the images under $G^i_n$ of the restrictions to $X \cap Y_i$ of the lifts in Step (2)(a).  
\item[(ii)] Express the images computed in the previous step using restrictions to $X \cap Y_i$ of the lifts in Step (2)(b). 
\item[(iii)] Use the previous step to determine explicit conditions on the lifts from Steps(2)(a) and (b). 
\end{enumerate}

\end{enumerate}

\item[(3)] For $p=0$ compute generators of $F^0\UU_{n+1}$ such that 
\begin{enumerate}
\item The conditions Step (2)(c)(iii) are satisfied.
\item $1 \in b^*F^0\UU_n$.
\end{enumerate} 

\item[(4)] For $p <0$:

\begin{enumerate}
\item For each $i$ and generator of $F^{p+1} \U^i_{n+1}$ compute the image under $\nabla^i$ the connection on $\U_n^i$. 
\item Compute generators of $F^p\UU_{n+1}$ such that:
\begin{enumerate}
\item[(i)] The conditions of Step (2)(c)(iii) are satisfied. 
\item[(ii)] $F^p\U^i_{n+1} \otimes \Omega^1_{C/K}(D)$ contains the images computed in Step (4)(a).
\end{enumerate}
\end{enumerate} 
\end{enumerate}

\end{algorithm}

\begin{remark}
Again this algorithm is completely general and it is not immediately obvious that the Algorithm terminates successfully. The obstruction, if it exists, lies in Steps (3) and (4) - it is not clear how easy it would be to compute lift satisfying the conditions stated. However, if we are able to do so then Lemma \ref{Lemma - Hadian's Lemma} (Hadian's Lemma) ensures that this algorithm will compute the unique Hodge filtration on $\UU_n$. The remainder of this section will be occupied with showing that we can in fact carry out the steps of this algorithm for elliptic curves and odd hyperelliptic curves. 
\end{remark}

\begin{remark}
Note that it is easy to see that this algorithm will terminate with the computation of $F^{-n} \UU_n$. Observe that $F^{-n} V_{dr}^{\otimes n} = V_{dr}^{\otimes n}$ and then a simple induction argument shows that $F^{-n} \UU_n = \UU_n$. 
\end{remark}

\begin{example} \label{Example - Level 1}
In this example we compute the Hodge filtration on $\U_1$ for $X:=C-D$ as in Definition \ref{Definition - UC on General Affine Curve} where $C$ is a smooth projective curve of genus $g$ over $K$  and $D=\lbrace d_1,..,d_r \rbrace$ is a divisor defined over $K$. Then $\U_1 =R_1 \otimes \O_X$ with $R_1 = \langle A_0,...,A_{2g+r-2} \rangle_K$ where the $A_k$ are dual to differentials $\alpha_k$. As per Definition \ref{Definition - Hodge Filtration on Vdr} note that $F^0V_{dr} $ has $K$-basis $\lbrace A_g,...,A_{2g+r-2}\rbrace$. 

Consider the logarithmic extension of $\U_1$ to a logarithmic connection $\UU_1$ on $C$ with log poles along $D$ as computed by Algorithm \ref{Algorithm - Computing GT on UC}. This is given by descent datum $(\U_1^i, G_1^i)_i$ over an open cover $(Y^i)_i$ of $C$, where $Y^0=X$. Then we will find that over $X \cap Y^i$

\[
G_1^i(1) = 1+ \sum_k h^i_k A_k, \; G_1^i(A_l) = A_l \text{ for all } l
\] for some $h^i_k \in K(C)$ such that each $h^i_k$ is regular on $Y^i - D$ and is such that $dh^i_k - \alpha_{k}$ has logarithmic poles along $D \cap Y_i$. Proceeding as in Algorithm \ref{Algorithm - Compute HF on general UC} we lift the generator $1$ of $F^0\UU_0$ over each $Y^i$ and adjoin the generators of $F^0V_dr \otimes \O_C$, obtaining 
\begin{equation}\label{Equation - Level 1 Lifts}
1 + \sum_{k < g} a^i_k A_k,\;  A_g, ..., A_{2g+r-2}
\end{equation}
for some $a^i_k \in H^0(Y^i,\O_{Y^i})$. On applying $G^1_i$ to the lifts (\ref{Equation - Level 1 Lifts}) over $X$ we find that the generators of $F^0\U^i_1|_{X \cap Y^i}$ as an $\O_{X \cap Y^i}$-module are 
\[
1 + \sum_{k < g} (h^i_k+a^0_k) A_k,\; A_g, ..., A_{2g+r-2}.
\]

Restricting the lifts (\ref{Equation - Level 1 Lifts}) over $Y_i$ to $X \cap Y^i$ we conclude that 
\[
(h^i_k+a^0_k)|_{X \cap Y^i} = a^i_k|_{X \cap Y^i}.
\]

Since $\alpha_k$ is regular on $C$ for all $k<g$ then we conclude that each $h^i_k \in H^0(Y^i,\O_{Y^i})$. Therefore, the sections $a^0_k,a^i_k-h^i_k$ glue to give a global section in $H^0(C, \O_C) \simeq K$ and so are constant. Since we require that $1 \in b^*F^0\UU_1$ then we must have that $a^0_k(b)=0$ and hence that $a^0_k=0$ for all $k$. Therefore, the Hodge filtration at on $F^0 \UU_1$ is generated over $X$ by 
\[
1, A_g, ..., A_{2g+r-2}.
\]
\end{example}

\subsection{A constructive algorithm for affine elliptic and hyperelliptic curves}

In what follows we present a proof that the extensions $\UU$ we computed by Algorithm \ref{Algorithm - Computing GT on UC of HEC} in Section 3 have a unique Hodge filtration and, furthermore, we show that there are explicit conditions which uniquely determine the generators of the filtration at each level. Finally, we present an algorithm which may be used to compute the Hodge filtration iteratively. 

Before stating the main theorem of this section we first identify an ordered $K$-basis of $F^0V_{dr}^{\otimes n}$ with respect to the basis in words of length $n$ with the graded lexicographic ordering of Section \ref{Subsection - Logarithmic Extensions on HEC}. Recall that in Definition \ref{Definition - Hodge Filtration on Vdr} we identified this space as being the $K$-span of the set of words of length $n$ in $A_g,..,A_{2g-1}$.

\begin{lemma}\label{Lemma - Hodge Basis for Vdr}
Let \index{$\F_n$}$\F_n$ be the set 
\[
\left\lbrace 1 + \sum_{i=0}^{n-1} f_i(2g)^i : f_i \in \lbrace g,..,2g-1 \rbrace \right\rbrace.
\]
Then $w^f_n$ for $f \in \F_n$ forms a $K$-basis for $F^0V_{dr}^{\otimes n}$.
\end{lemma}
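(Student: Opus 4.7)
The plan is to produce an explicit bijection between $\F_n$ and the set of words of length $n$ in the letters $\{A_g, A_{g+1}, \ldots, A_{2g-1}\}$, which by Definition \ref{Definition - Hodge Filtration on Vdr} is what spans $F^0V_{dr}^{\otimes n}$. The key observation is that the graded lexicographic ordering recalled at the start of Section \ref{Subsection - Logarithmic Extensions on HEC} yields a completely explicit formula for the index $k$ of the word $w^k_n$ in terms of its letters.

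First I would establish, by induction on $n$ using the recurrence $A_i w^k_l = w^{(2g)^l i + k}_{l+1}$, that for any word $A_{j_{n-1}} A_{j_{n-2}} \cdots A_{j_0}$ with digits $j_i \in \{0, 1, \ldots, 2g-1\}$ one has
\[
A_{j_{n-1}} A_{j_{n-2}} \cdots A_{j_0} \;=\; w^{k}_n \qquad \text{where} \qquad k \;=\; 1 + \sum_{i=0}^{n-1} j_i (2g)^i.
\]
This is simply the statement that $k-1$ has base-$(2g)$ digits $(j_0, j_1, \ldots, j_{n-1})$; the inductive step is immediate from the recurrence applied to $w^{k'}_{n-1}$.

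Next I would specialise: the word $w^k_n$ uses only letters from $\{A_g, \ldots, A_{2g-1}\}$ if and only if every digit $j_i$ lies in $\{g, \ldots, 2g-1\}$. Renaming $j_i = f_i$, this is exactly the defining condition of $\F_n$, so $f \mapsto w^f_n$ is a bijection between $\F_n$ and the set of length-$n$ words in the restricted alphabet $A_g, \ldots, A_{2g-1}$.

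Finally, I would invoke Definition \ref{Definition - Hodge Filtration on Vdr}: the subspace $F^0V_{dr}^{\otimes n}$ is by definition the $K$-span of the length-$n$ words containing no occurrences of $A_0, \ldots, A_{g-1}$, i.e.\ of the length-$n$ words in $A_g, \ldots, A_{2g-1}$. Since $\{w^f_n : f \in \F_n\}$ is a subset of the full $K$-basis $\B_n$ of $V_{dr}^{\otimes n}$, it is automatically linearly independent, and combined with the spanning property it is a $K$-basis of $F^0V_{dr}^{\otimes n}$. There is no serious obstacle: the lemma is essentially a combinatorial identification, and the only potential pitfall is keeping the base-$(2g)$ expansion conventions consistent between the statement of $\F_n$ and the indexing convention for the graded lex order.
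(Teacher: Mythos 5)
Your proposal is correct and follows essentially the same route as the paper: the paper also proceeds by induction on $n$ using the recurrence $A_i w^k_l = w^{k+(2g)^l i}_{l+1}$ to identify the length-$n$ words in $A_g,\dots,A_{2g-1}$ with the indices in $\F_n$, combined with Definition \ref{Definition - Hodge Filtration on Vdr}. Your version merely makes the base-$(2g)$ digit formula for the index of an arbitrary word explicit before restricting to the sub-alphabet, which is a harmless (and slightly more transparent) repackaging of the same induction.
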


\begin{proof}
By definition $F^0V_{dr}$ has $K$-basis $\lbrace A_g,..,A_{2g-1} \rbrace= \lbrace w^{1+g}_1,..,w^{1+2g-1}_1 \rbrace$. Hence the statement of the lemma is true in the case that $n=1$. We now proceed by induction. Suppose that the statement of the lemma is true for some $n$. Then $F^0V_{dr}^{\otimes n}$ has $K$-basis $\lbrace w^f_n | f \in \F_n\rbrace$ where $\F_n$ is as in the statement of the lemma. 

Note that $F^0V_{dr}^{\otimes n+1}$ has $K$-basis $\bigcup_{i=g}^{2g-1} \lbrace A_iw^f_n | f \in \F_n \rbrace = \bigcup_{i=g}^{2g-1} \lbrace w^{(2g)^ni+f}_{n+1} | f \in \F_n \rbrace$ upon recalling that $A_i w^k_l = w_{l+1}^{k+(2g)^li}$. Therefore, we conclude that the statement of the lemma is true for $n+1$. Hence, by induction we deduce that the lemma is true for all $n\geq 1$.  
  \end{proof}

\begin{remark}
For the sake of brevity in the proof of Theorem \ref{Theorem - Hodge Filtration on HEC or EC} below we define $\F_0:= \lbrace 1 \rbrace$ since $F^0 \UU_0=F^0\O_C=\O_C$. 
\end{remark}

The main theorem of this section may now be stated as follows. 

\begin{thm} \label{Theorem - Hodge Filtration on HEC or EC}
Let $C$ be an elliptic curve or an odd hyperelliptic curve of genus $g$ over a field $K$ of characteristic $0$ and let $\PP$ be the point at infinity. Let $X:=C-\lbrace \PP \rbrace$ be the affine curve with model $y^2=f(x)$ for some $f(x) \in K[x]$ where $\deg f=2g+1$. Let $b \in X(K)$ be a basepoint. Let $\U= \lbrace (\U_n,1) \rbrace_{n \geq 0}$ be the universal connection on $X$ as in Definition \ref{Definition - UC on General Affine Curve}. Let $\UU= \lbrace (\UU_n, 1) \rbrace$ be the logarithmic extension of $\U$ computed by Algorithm \ref{Algorithm - Computing GT on UC of HEC}. 

Then there is a unique filtration \index{$F^{\bullet}\UU_n$}$F^{\bullet}\UU_n$ of $\UU_n$ for all $n$ and this filtration $F^{\bullet}\UU_n$ is explicitly computable; the logarithmic connection $\UU_n$ has the structure of a filtered logarithmic connection on $C$ fitting into an exact sequence 

\[
0 \rightarrow V_{dr}^{\otimes n} \otimes \O_C \rightarrow \UU_n \rightarrow \UU_{n-1} \rightarrow 0
\] of filtered logarithmic connections; and $1 \in b^*F^0\UU_n$ for all $n$. 
\end{thm}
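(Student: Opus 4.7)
The plan is to prove the theorem by induction on $n$, using Algorithm \ref{Algorithm - Compute HF on general UC} as the recipe for constructing the filtration and Hadian's Lemma (Lemma \ref{Lemma - Hadian's Lemma}) to guarantee existence and uniqueness at the abstract level. The work, which simultaneously yields the ``explicitly computable'' claim, consists in showing that for the specific logarithmic extensions $\UU_n$ produced by Algorithm \ref{Algorithm - Computing GT on UC of HEC} every step of the general algorithm can be carried out in closed form, and that the resulting choices are pinned down uniquely.

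The base case $n=0$ is immediate: $\UU_0=(\O_C,d)$ with $F^0\UU_0=\O_C$ and $F^1\UU_0=0$. For the inductive step I assume $F^{\bullet}\UU_n$ has been constructed uniquely satisfying (i)--(iii) of Lemma \ref{Lemma - Hadian's Lemma}, and describe $\UU_{n+1}$ via the descent datum of $\U_{n+1}$ over $X$, $\U'_{n+1}$ over the open $Y \ni \PP$ from Theorem \ref{Theorem - Compute GT on UC of EC}, glued by $G_{n+1}$. I then set $F^p\UU_{n+1}=0$ for $p>0$ and proceed downwards in $p$: for each $p \leq 0$ I lift the generators of $F^p\U_n$ and $F^p\U'_n$ to sections over $X$ and $Y$ respectively, adjoining the generators of $F^pV_{dr}^{\otimes(n+1)}\otimes \O_C$ as given in Lemma \ref{Lemma - Hodge Basis for Vdr} (words $w^f_{n+1}$ with $f\in\F_{n+1}$) and its obvious analogues for $p<0$ (words with at most $|p|$ occurrences of $A_0,\ldots,A_{g-1}$).

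The compatibility of the two sets of lifts under $G_{n+1}$ produces a system of linear conditions on the unknown coefficient functions. Using the explicit action of $G_{n+1}$ on $\B_{n+1}$ supplied by Lemma \ref{Lemma - Action of G_n on B_n}, these conditions reduce to requiring that specified combinations of the gauge functions $h^{r,i}_{n+1}$ and the coefficient functions of the lifts glue to global sections on $C$; since $H^0(C,\O_C)=K$, the resulting global sections are constants. For $p=0$, the requirement $1 \in b^*F^0\UU_{n+1}$ fixes these constants uniquely by evaluation at the basepoint, precisely as in Example \ref{Example - Level 1}. For $p<0$, Griffiths transversality $\nabla(F^p\UU_{n+1})\subset F^{p-1}\UU_{n+1}\otimes \Omega^1_C(\PP)$ is verified directly using the explicit form of $C'_{n+1}$ and the fact that the $1$-forms $c^{r,i}_{n+1}$ appearing there are exactly the coefficients produced by $\nabla$ on a basis element of $\B_{n+1}$.

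The main obstacle I expect is not uniqueness (which follows from Hadian's Lemma once existence is established) but rather verifying, with the explicit lifts, that every word $w^k_{n+1}$ that appears in the adjustment of a lift of a section of $F^p\U_n$ actually lies in $F^pV_{dr}^{\otimes(n+1)}$, so that the enlarged filtration still satisfies the sub-bundle requirement. This is a combinatorial bookkeeping exercise involving the auxiliary functions $\phi, \psi$ of Definition \ref{Definition - Auxiliary Functions} and the indexing set $\F_{n+1}$ of Lemma \ref{Lemma - Hodge Basis for Vdr}: one must show that the ``parent'' indices $\psi(k,i,n+1,2g)$ arising in the image $G_{n+1}(w^k_{n+1})$ preserve the condition of having at most $|p|$ letters from $\{A_0,\ldots,A_{g-1}\}$. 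The argument parallels the double induction structure of the proof of Theorem \ref{Theorem - Compute GT on UC of EC}, and once this index-tracking lemma is in place the rest of the proof assembles mechanically, simultaneously yielding Algorithm \ref{Algorithm - Compute HF on HEC or EC}.
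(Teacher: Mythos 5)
Your overall architecture (induction on $n$, lift generators over $X$ and over $Y$, match them under $G_{n+1}$, use $H^0(C,\O_C)=K$, normalise with $1\in b^*F^0\UU_{n+1}$) matches the paper's, but there are two genuine gaps. First, you dismiss uniqueness as ``follow[ing] from Hadian's Lemma once existence is established.'' Hadian's Lemma \ref{Lemma - Hadian's Lemma} only gives uniqueness \emph{up to automorphism of filtered logarithmic connections}, whereas the theorem asserts that the filtration itself is unique. The paper proves this genuine uniqueness separately (Lemma \ref{Lemma - Uniqueness of Lifts}) by showing that the difference of two admissible systems of lifts glues to global, hence constant, sections and then forcing these constants to vanish via a pole-order comparison at $\PP$: the gauge functions $h^{t,0}_1$ for $t>g$ have poles of \emph{odd} order $2(t-g)-1\in\{1,3,\dots,2g-1\}$, while nonconstant elements of $K[x,y]/(y^2-f(x))$ can only have poles of order in $\{2,4,\dots,2g\}\cup\{o\geq 2g+1\}$. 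Without this argument the word ``unique'' in the statement is not established.

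Second, your existence argument is too thin at levels $m\geq 1$. The gluing conditions $I_m$ (Lemma \ref{Lemma - Conditions on Lifts}) couple the unknown $a^{n+1,k}_{m,f}\in H^0(X,\O_X)$ and $b^{n+1,k}_{m,f}\in H^0(Y,\O_Y)$ through products with the $h^{r,i}_{n+1}$, which have poles at $\PP$; one must actually show that the principal part of the known quantity $s^{n+1,k}_{m,f}$ can be cancelled by a polynomial in $x,y$ together with constant multiples of the $h^{t,0}_1$, and that the undetermined constants $\lambda^{n+1,k}_{m,f}$ for $m\geq 1$ are recursively forced by this matching (evaluation at $b$ only handles $m=0$). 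This odd-versus-even pole-order dichotomy, special to the odd model $y^2=f(x)$ with $\deg f=2g+1$, is the crux of the constructive claim (Algorithm \ref{Algorithm - Compute HF on HEC or EC}) and is absent from your proposal. Conversely, the ``main obstacle'' you identify --- checking that correction words in the lifts lie in $F^pV_{dr}^{\otimes(n+1)}$ --- is not actually required: the lifts of generators of $F^p\U_n$ are permitted arbitrary correction terms in words of length $n+1$ outside the adjoined generators, and exactness of the sequence of sub-bundles holds by construction.
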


Since $F^1\UU_0=F^1\O_C=0$ and $F^1V_{dr}^{\otimes n} =0$ we easily deduce by induction that $F^1\UU_n=0$ for all $n$. So the first non-trivial case we consider is the computation of the $F^0$ part of the filtration. The proof can be conveniently split up into the following three parts. First, we derive necessary and sufficient conditions on lifts of generators of $F^0\UU_n$ over $X$ and $Y$ such that any lifts satisfying them will generate a sub-bundle of $\UU_n$ satisfying the conditions of Lemma \ref{Lemma - Hadian's Lemma}. Second, we show that any lift satisfying these conditions are unique. Finally, we provide an algorithm to compute such lifts, demonstrating that they exist.

In what follows we shall need to make use of the following function. 

\begin{definition} \label{Definition - Third Auxiliary Function}
Let $(i,j,p,q) \in \Z^4$. Then define \index{$\tau$}
\[
\tau(i,j,p,q) := \begin{cases} 1 & \text{if } i=j+(\psi(i,0,p,q)-1)q^p \\ 0 & \text{otherwise} \end{cases}\]

where $\psi$ is the function in Definition \ref{Definition - Auxiliary Functions}. 
\end{definition}

\begin{lemma}\label{Lemma - Conditions on Lifts}
Suppose that the conditions in the statement of Theorem \ref{Theorem - Hodge Filtration on HEC or EC} are satisfied. Then suppose that 
\begin{itemize}
\item[($A_n$)] $F^0\U_n$ is generated as an $\O_X$-module by \index{$a_{m,f}^{l,k}$}
\begin{align*}
w^f_n & \qquad \text{where }f \in \F_n\\
w^f_m + \sum_{l=m+1}^n \sum_{k=1}^{(2g)^l}a_{m,f}^{l,k}w^k_l & \qquad \text{where } m \in \lbrace 0,..,n-1 \rbrace, f \in \F_m 
\end{align*}

for some $a_{m,f}^{l,k} \in H^0(X,\O_X)$ such that $a_{m,f}^{l,f'}=0$ for all $f' \in \F_l$ and $a_{0,1}^{l,k}(b)= 0$ for all $l,k$. 
\item[($B_n$)] $F^0\U'_n$ is generated as an $\O_Y$-module by \index{$b_{m,f}^{l,k}$}
\begin{align*}
w^f_n & \qquad \text{where }f \in \F_n\\
w^f_m + \sum_{l=m+1}^n \sum_{k=1}^{(2g)^l}b_{m,f}^{l,k}w^k_l & \qquad \text{where } m \in \lbrace 0,..,n-1 \rbrace, f \in \F_m 
\end{align*}

for some $b_{m,f}^{l,k} \in H^0(Y,\O_Y)$ such that $b_{m,f}^{l,f'}=0$ for all $f' \in \F_l$. 
\end{itemize}
Suppose there are lifts $(A_{n+1})$ and $(B_{n+1})$ generating a sub-bundle $F^0\UU_{n+1}$ of $\UU_{n+1}$ such that the following sequence is exact
\[
0 \rightarrow F^0V_{dr}^{\otimes (n+1)} \otimes \O_C \rightarrow F^0\UU_{n+1} \rightarrow F^0\UU_{n} \rightarrow 0.
\]

Then the lifts satisfy the following conditions on restriction to $X \cap Y$: \index{$I_m$}
\begin{itemize} 
\item[$I_n$:] For $f \in \F_n$ and $k \in \lbrace 1,..,(2g)^{n+1}\rbrace$ \[b_{n,f}^{n+1,k}=a_{n,f}^{n+1,k} + h^{\psi(k,0,n,2g),n}_{n+1}\tau(k,f,n,2g)- \sum_{f' \in \F_{n+1}} h^{\psi(f',0,n,2g),n}_{n+1}\tau(f',f,m,2g) \delta_{f',k}\]
\item[$I_m$:] For $f \in \F_m$ and $k \in \lbrace 1,..,(2g)^{n+1}\rbrace$ \begin{align*}
b_{m,f}^{n+1,k} = &a_{m,f}^{n+1,k} + h^{\psi(k,0,m,2g),m}_{n+1}\tau(k,f,m,2g)- \sum_{f' \in \F_{n+1}} h^{\psi(f',0,m,2g),m}_{n+1}\tau(f',f,m,2g) \delta_{f',k} \\
& + \sum_{l=m+1}^{n} a^{l,k-(\psi(k,0,l,2g)-1)(2g)^l}_{m,f}h^{\psi(k,0,l,2g),l}_{n+1}  \\
&- \sum_{p=m+1}^{n}\sum_{f' \in \F_p} b_{p,f'}^{n+1,k}h^{\psi(f',0,m,2g),m}_p\tau(f',f,m,2g)
\end{align*}
\end{itemize} where $m \in \lbrace 0,..,n-1 \rbrace$. Additionally, $a^{n+1,k}_{0,1}(b)=0$ for all $k$. 
\end{lemma}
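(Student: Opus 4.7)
The strategy is to compare the two descent presentations of $F^0\UU_{n+1}$ on the overlap $X \cap Y$ through the gauge transformation $G_{n+1}$. Both $\{A_{m,f}\}$ and $\{B_{m,f}\}$ serve as $\O$-bases for $F^0\UU_{n+1}$ on their respective patches --- the cardinality of each family equals $\sum_{m=0}^{n+1} g^m$, which matches the rank of $F^0\UU_{n+1}$ forced by the exact sequence in Hadian's Lemma --- and $G_{n+1}$ supplies the change of trivialisation of $\UU_{n+1}$ between the two. Consequently, there is a unique expansion
\[
G_{n+1}(A_{m,f}) \;=\; B_{m,f} \;+\; \sum_{p=m+1}^{n+1}\sum_{f'\in\F_p}\delta^{m,f}_{p,f'}\,B_{p,f'} \qquad \text{on } X\cap Y,
\]
in which the coefficients of $B_{m',f'}$ with $m'\le m$ (apart from the summand $B_{m,f}$ itself) vanish by a leading-term argument: Lemma \ref{Lemma - Action of G_n on B_n} shows $G_{n+1}$ is upper-triangular in word length, while $A_{m,f}$ and $B_{m,f}$ both have $w^f_m$ as their unique term of length $m$. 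The conditions $I_m$ will emerge from equating coefficients of basis words $w^k_l$ on either side of this identity.

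Concretely, I would first compute each $\delta^{m,f}_{p,f'}$ for $m<p \le n+1$, $f'\in\F_p$, by equating coefficients of $w^{f'}_p$. Using the expansion of $G_{n+1}$ from Lemma \ref{Lemma - Action of G_n on B_n} and the vanishing conditions $a^{p,f'}_{m,f}=0$, $b^{p,f'}_{q,f''}=0$ whenever $f'\in\F_p$, this gives
\[
\delta^{m,f}_{p,f'} \;=\; h^{\psi(f',0,m,2g),m}_p\,\tau(f',f,m,2g) \;+\; \sum_{l=m+1}^{p-1} a^{l,\,f'-(\psi(f',0,l,2g)-1)(2g)^l}_{m,f}\,h^{\psi(f',0,l,2g),l}_p.
\]
Equating coefficients of $w^k_{n+1}$ for arbitrary $k\in\{1,\dots,(2g)^{n+1}\}$ then yields $a^{n+1,k}_{m,f} + h^{\psi(k,0,m,2g),m}_{n+1}\tau(k,f,m,2g) + \sum_{l=m+1}^n a^{l,k-(\psi(k,0,l,2g)-1)(2g)^l}_{m,f}\,h^{\psi(k,0,l,2g),l}_{n+1}$ on the LHS and $b^{n+1,k}_{m,f} + \delta^{m,f}_{n+1,k}[k\in\F_{n+1}] + \sum_{p=m+1}^n\sum_{f'\in\F_p}\delta^{m,f}_{p,f'}\,b^{n+1,k}_{p,f'}$ on the RHS. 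The base case $m=n$ gives $I_n$ at once because all intermediate-level sums collapse.

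The main obstacle is to reduce the derived identity to the compact form of $I_m$ for $m<n$. One rewrites $\delta^{m,f}_{n+1,k}[k\in\F_{n+1}]$ via the Kronecker delta as $\sum_{f'\in\F_{n+1}}h^{\psi(f',0,m,2g),m}_{n+1}\tau(f',f,m,2g)\delta_{f',k}$ and substitutes the two-term formula for $\delta^{m,f}_{p,f'}$ into the last sum. The resulting quadruple sum involving products $a^{l,\cdots}_{m,f}\,h^{\cdots,l}_p\,b^{n+1,k}_{p,f'}$ must be shown to telescope against the direct contribution $\sum_{l=m+1}^n a^{l,k-\cdots}_{m,f}\,h^{\cdots,l}_{n+1}$ so that only the leading $h\cdot\tau$ parts of the $\delta$'s survive paired with the $b^{n+1,k}_{p,f'}$'s. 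This telescoping is carried out by descending induction on $m$, invoking the already-established $I_p$ for $m<p\le n$ to rewrite the $b^{n+1,k}_{p,f'}$ terms, and is the combinatorial heart of the argument; it requires careful bookkeeping of the indexing functions $\psi$, $\phi$, and $\tau$.

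Finally, the basepoint condition $a^{n+1,k}_{0,1}(b)=0$ follows from the requirement $1\in b^*F^0\UU_{n+1}$. Writing $1 = \sum_{m\ge 0,\,f\in\F_m}c_{m,f}\,A_{m,f}|_b$ and extracting coefficients in the word basis one level at a time, one reads off $c_{0,1}=1$ from the coefficient of $w^1_0$, then $c_{m,f}=0$ for all $m\ge 1$, $f\in\F_m$ from the coefficient of $w^f_m$ (using that $a^{m,f}_{m',f'}=0$ for $f\in\F_m$), and finally $a^{m,k}_{0,1}(b)=0$ for every $m\ge 1$ and every $k$ from the coefficient of $w^k_m$, the case $k\in\F_m$ being trivial since $a^{m,k}_{0,1}$ is identically zero there. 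Specialising to $m=n+1$ gives the required identity.
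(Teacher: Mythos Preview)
Your approach is correct and is essentially the paper's own: both arguments expand $G_{n+1}$ applied to the $X$-generators in terms of the $Y$-generators and then read off the coefficient of $w^k_{n+1}$. The paper packages this via a recursive subtraction $\tilde S^f_m:=\tilde T^f_m-\sum_{p>m}\lambda^{f'}_p\tilde S^{f'}_p$, which is exactly your change-of-basis coefficients $\delta^{m,f}_{p,f'}$ in disguise.

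However, you have misidentified the ``combinatorial heart''. The second sum in your formula for $\delta^{m,f}_{p,f'}$, namely
\[
\sum_{l=m+1}^{p-1} a^{l,\,f'-(\psi(f',0,l,2g)-1)(2g)^l}_{m,f}\,h^{\psi(f',0,l,2g),l}_p,
\]
vanishes term-by-term. Indeed, since $f'\in\F_p$ the word $w^{f'}_p$ lies in the letters $A_g,\dots,A_{2g-1}$, so its last $l$ letters do too; but the index $f'-(\psi(f',0,l,2g)-1)(2g)^l$ is precisely the index of that length-$l$ suffix, hence lies in $\F_l$, and $a^{l,\cdot}_{m,f}$ vanishes on $\F_l$ by hypothesis $(A_n)$. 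Thus $\delta^{m,f}_{p,f'}=h^{\psi(f',0,m,2g),m}_p\,\tau(f',f,m,2g)$ outright, and your LHS $=$ RHS identity for the $w^k_{n+1}$-coefficient rearranges \emph{immediately} to $I_m$ with no telescoping and no appeal to the already-established $I_p$ for $p>m$. The descending recursion on $m$ is still present, but only to have the $b^{n+1,k}_{p,f'}$ available on the right-hand side, not to cancel cross-terms. The paper makes this same simplification (it computes $\lambda^{f'}_p=h^{\psi(f',0,m,2g),m}_p\tau(f',f,m,2g)$ directly, citing Lemma~\ref{Lemma - Hodge Basis for Vdr}); once you incorporate it, your argument and the paper's coincide.

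Your treatment of the basepoint condition $a^{n+1,k}_{0,1}(b)=0$ is fine.
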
 

\begin{proof}
Assume that ($A_n$) and ($B_n$) hold for some $n$. We require that $F^0\UU_{n+1}$ fits into an exact sequence 

\[
0 \rightarrow F^0V_{dr}^{\otimes (n+1)} \otimes \O_C \rightarrow F^0\UU_{n+1} \rightarrow F^0\UU_{n} \rightarrow 0.
\]

Considering this exact sequence over $X$ we lift the generators of $F^0\U_n$ arbitrarily to $F^0\U_{n+1}$ and suppose said lifts, together with the generators of $F^0V_{dr}^{\otimes (n+1)} \otimes \O_C$, are generators of $F^0\U_{n+1}$. We let these generators be \index{$T^f_{m}$}
\begin{align*}
T^f_{n+1}:=w^f_{n+1} & \qquad \text{where }f \in \F_{n+1}\\
T^f_{m}:=w^f_m + \sum_{l=m+1}^{n+1} \sum_{k=1}^{(2g)^l}a_{m,f}^{l,k}w^k_l & \qquad \text{where } m \in \lbrace 0,..,n \rbrace, f \in \F_m 
\end{align*} for some $a_{m,f}^{n+1,k} \in H^0(X,\O_X)$. Since we suppose these are generators of $F^0\U_{n+1}$ we may assume that $a_{m,f}^{n+1,f'}=0$ for all $f' \in \F_{n+1}$. 

We repeat this over $Y$ obtaining lifts\index{$S^f_{m}$} \begin{align*}
S^f_{n+1}:=w^f_{n+1} & \qquad \text{where }f \in \F_{n+1}\\
S^f_{m}:=w^f_m + \sum_{l=m+1}^{n+1} \sum_{k=1}^{(2g)^l}b_{m,f}^{l,k}w^k_l & \qquad \text{where } m \in \lbrace 0,..,n \rbrace, f \in \F_m 
\end{align*} for some $b_{m,f}^{n+1,k} \in H^0(X,\O_X)$. We take these as candidate generators for $F^0\U'_{n+1}$, and again we may assume that $b_{m,f}^{n+1,f'}=0$ for all $f' \in \F_{n+1}$. 

We need to show the following that over $X \cap Y$ we have an isomorphism of bundles

\[
G_{n+1}:F^0\U_{n+1}|_{X \cap Y} \cong F^0\U'_{n+1}|_{X \cap Y}.
\]

We consider the action of $G_{n+1}$ on the candidate generators for $F^0\U_{n+1}$. In what follows we must work over $X \cap Y$, but we suppress the notation $|_{X \cap Y}$ for ease of exposition. Using Lemma \ref{Lemma - Action of G_n on B_n} we determine the action of $G_{n+1}$ on the candidate generators $T^f_m$ of $F^0\U_{n+1}$. We summarise these calculations below: \index{$\tilde{T}^f_m$}
\begin{align*}
T^f_{n+1}  \mapsto  \tilde{T}^f_{n+1}& := w^f_{n+1}\\
T^f_m \mapsto  \tilde{T}^f_m:= & w^f_m+\sum_{s=m+1}^{n+1} \sum_{t=1}^{(2g)^{s-m}} w_s^{f+(t-1)(2g)^m}h_s^{t,m} \\ 
& + (1-\delta_{m,n})\sum_{l=m+1}^{n} \sum_{k=1}^{(2g)^l}a_{m,f}^{l,k}\left(w^k_l+\sum_{s=l+1}^{n+1} \sum_{t=1}^{(2g)^{s-l}} w_s^{k+(t-1)(2g)^l}h_s^{t,l}\right) \\
& + \sum_{k=1}^{(2g)^{n+1}}a_{m,f}^{n+1,k}w^k_{n+1}.
\end{align*} 
for $m \in \lbrace 0,..,n\rbrace$ and $f \in \F_m$. Define $\tilde{S}^f_m$\index{$\tilde{S}^f_m$} recursively as follows
\begin{align*}
\tilde{S}^f_{n+1} & := S^f_{n+1} \\
\tilde{S}^f_m &:= \tilde{T}^f_m - \sum_{p=m+1}^{n+1}\sum_{f' \in \F_p} \lambda^{f'}_p\tilde{S}^{f'}_p
\end{align*} where $m \in \lbrace 0,..,n \rbrace$ and $\lambda^{f'}_p \in H^0(X \cap Y, \O_{X \cap Y})$ is the co-efficient of $w^{f'}_p$ appearing in $\tilde{T}^f_m$. As the coefficient of $w^{f'}_{m'}$ in $\tilde{S}^f_m$ is $1$ if $f=f',m=m'$ and is $0$ otherwise we have the following equality for all $m,f$ 
\begin{equation}\label{Equation - Equality of Generators on X cap Y}
\tilde{S}^f_m = S^f_m |_{X \cap Y}.
\end{equation}

Using Lemma \ref{Lemma - Hodge Basis for Vdr} we calculate that $\lambda^{f'}_p=h^{\psi(f',0,m,2g),m}_p\tau(f',f,m,2g)$. The equality (\ref{Equation - Equality of Generators on X cap Y}) is trivially true by definition when $m=n+1$. For $m<n+1$ we proceed recursively, finding first that
\begin{align*}
\tilde{S}^f_n& = w^f_n+\sum_{t=1}^{2g} w_{n+1}^{f+(t-1)(2g)^n}h_{n+1}^{t,n} + \sum_{k=1}^{(2g)^{n+1}}a_{n,f}^{n+1,k}w^k_{n+1} \\ 
&  - \sum_{f' \in \F_{n+1}} h^{\psi(f',0,n,2g),n}_{n+1}\tau(f',f,n,2g)w^{f'}_{n+1}\\
&= w^f_n+ \sum_{k=1}^{(2g)^{n+1}}b_{n,f}^{n+1,k}w^k_{n+1}.
\end{align*}

By considering the coefficient of $w^q_{n+1}$ on both sides of the equality above we conclude that on restriction to $X \cap Y$ the sections $a_{n,f}^{n+1,q}, b_{n,f}^{n+1,q}$ must satisfy
\[
b_{n,f}^{n+1,q}=a_{n,f}^{n+1,q} + h^{\psi(q,0,n,2g),n}_{n+1}\tau(q,f,m,2g)- \sum_{f' \in \F_{n+1}} h^{\psi(f',0,n,2g),n}_{n+1}\tau(f',f,m,2g) \delta_{f',q}
\]

Now suppose that $m \in \lbrace 0,..,n-1 \rbrace$. Then if we suppose that we have an equality $\tilde{S}^f_p= S^f_p|_{X \cap Y}$ for $p>m$ we find that 

\begin{align*}
\tilde{S}^f_m&= w^f_m+\sum_{s=m+1}^{n+1} \sum_{t=1}^{(2g)^{s-m}} w_s^{f+(t-1)(2g)^m}h_s^{t,m} + \sum_{k=1}^{(2g)^{n+1}}a_{m,f}^{n+1,k}w^k_{n+1}\\ 
& + \sum_{l=m+1}^{n} \sum_{k=1}^{(2g)^l}a_{m,f}^{l,k}\left(w^k_l+\sum_{s=l+1}^{n+1} \sum_{t=1}^{(2g)^{s-l}} w_s^{k+(t-1)(2g)^l}h_s^{t,l}\right) \\
& - \sum_{p=m+1}^{n}\sum_{f' \in \F_p} h^{\psi(f',0,m,2g),m}_p\tau(f',f,m,2g)\left(w^{f'}_p + \sum_{l=p+1}^{n+1} \sum_{k=1}^{(2g)^l}b_{p,f'}^{l,k}w^k_l\right) \\
& - \sum_{f' \in \F_{n+1}} h^{\psi(f',0,m,2g),m}_{n+1}\tau(f',f,m,2g)w^{f'}_{n+1}\\
& = w^f_m + \sum_{l=m+1}^{n+1} \sum_{k=1}^{(2g)^l}b_{m,f}^{l,k}w^k_l= S^f_m
\end{align*}

Since the $a^{l,q}_{m,f}, b^{l,q}_{m,f}$ are known for $l < n+1$, we may assume that we have equality of coefficients among words of length at most $n$. So we need only concern ourselves with words of length $n+1$ in the above. We consider the coefficient of the word $w^q_{n+1}$ where $q \in \lbrace 1,..,(2g)^{n+1} \rbrace$. Arguing as when $m=n$ we conclude that on restriction to $X \cap Y$ the sections $a_{m,f}^{n+1,q}, b_{m,f}^{n+1,q}$ satisfy
\begin{align*}
b_{m,f}^{n+1,q} = &a_{m,f}^{n+1,q} + h^{\psi(q,0,m,2g),n}_{n+1}\tau(q,f,m,2g)- \sum_{f' \in \F_{n+1}} h^{\psi(f',0,m,2g),m}_{n+1}\tau(f',f,m,2g) \delta_{f',q} \\
& + \sum_{l=m+1}^{n} a^{l,q-(\psi(q,0,l,2g)-1)(2g)^l}_{m,f}h^{\psi(q,0,l,2g),l}_{n+1}  - \sum_{p=m+1}^{n}\sum_{f' \in \F_p} b_{p,f'}^{n+1,q}h^{\psi(f',0,m,2g),m}_p\tau(f',f,m,2g).
\end{align*}
Therefore, we have shown that any such lifts must satisfy the conditions in the statement of the lemma. 
  \end{proof}

We now show that any sections satisfying these conditions must be unique.

\begin{lemma}\label{Lemma - Uniqueness of Lifts}
Suppose that there are lifts \index{$S^f_m$}\index{$T^f_m$}$S^f_m, T^f_m$ for $m \in \lbrace 0,..,n \rbrace$ and $f \in \F_m$ satisfying the conditions $(I_m)$ of Lemma \ref{Lemma - Conditions on Lifts}. Then the lifts $S^f_m, T^f_m$ are unique. 
\end{lemma}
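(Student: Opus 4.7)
The plan is to suppose two sets of lifts $(T^f_m, S^f_m)$ and $(T'^f_m, S'^f_m)$ with coefficients $a^{n+1,k}_{m,f}$, $b^{n+1,k}_{m,f}$ and $a'^{n+1,k}_{m,f}$, $b'^{n+1,k}_{m,f}$ respectively, both satisfying the hypotheses of Lemma~\ref{Lemma - Conditions on Lifts}, and to show their differences vanish. Set $\alpha^{n+1,k}_{m,f} := a^{n+1,k}_{m,f} - a'^{n+1,k}_{m,f}$ and $\beta^{n+1,k}_{m,f} := b^{n+1,k}_{m,f} - b'^{n+1,k}_{m,f}$. The data at levels $l \leq n$ and the functions $h^{j,i}_p$ coming from Algorithm~\ref{Algorithm - Computing GT on UC of HEC} are fixed by prior inductive hypotheses, so on subtracting the two instances of $(I_m)$ every $h$-only term and every term linear in a level-$\leq n$ coefficient cancels, and one obtains on $X \cap Y$
\[
\beta^{n+1,k}_{m,f} \;=\; \alpha^{n+1,k}_{m,f} \;-\; \sum_{p=m+1}^{n}\sum_{f'\in\F_p} \beta^{n+1,k}_{p,f'}\, h^{\psi(f',0,m,2g),m}_p\, \tau(f',f,m,2g).
\]

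I would then perform a downward induction on $m$ from $m=n$ to $m=0$. The base case $m=n$ has empty correction sum, so $\alpha^{n+1,k}_{n,f} = \beta^{n+1,k}_{n,f}$ on $X \cap Y$ directly, and at each subsequent step the inductive hypothesis kills the correction sum, giving $\alpha^{n+1,k}_{m,f} = \beta^{n+1,k}_{m,f}$ on $X \cap Y$ in every case. Since $\alpha^{n+1,k}_{m,f} \in H^0(X,\O_X)$ and $\beta^{n+1,k}_{m,f} \in H^0(Y,\O_Y)$ agree on the overlap and $X \cup Y = C$, they glue to a global regular function on the smooth projective connected curve $C$, hence to a constant $\delta^{n+1,k}_{m,f} \in H^0(C,\O_C) = K$.

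It remains to show every such constant is zero. When $k \in \F_{n+1}$, the normalizations $a^{n+1,k}_{m,f} = a'^{n+1,k}_{m,f} = 0$ and the analogous identities on $Y$ built into $(A_{n+1})$ and $(B_{n+1})$ give $\delta^{n+1,k}_{m,f} = 0$ outright. When $k \notin \F_{n+1}$ and $(m,f) = (0,1)$, evaluating at the basepoint, $a^{n+1,k}_{0,1}(b) = a'^{n+1,k}_{0,1}(b) = 0$, forces $\delta^{n+1,k}_{0,1}(b) = 0$, hence $\delta^{n+1,k}_{0,1} = 0$ since it is constant.

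The main obstacle is the remaining case $k \notin \F_{n+1}$ with $(m,f) \neq (0,1)$, where neither the normalization nor the basepoint condition applies directly. I expect this to be handled by exploiting the universal and Hopf-algebra structure: the difference of two such sets of lifts assembles into an endomorphism of $\UU_{n+1}$ as a filtered logarithmic connection that vanishes on $\UU_n$ and lands in $V_{dr}^{\otimes(n+1)} \otimes \O_C$, and the uniqueness-up-to-automorphism clause of Hadian's Lemma~\ref{Lemma - Hadian's Lemma}(i) together with the co-product morphism $\Delta : \U_{n+1} \to \U_m \otimes \U_{n+1-m}$ should force this endomorphism to be trivial. Concretely, the coefficient of each $w^k_{n+1}$ in $T^f_m$ for $m > 0$ is determined, via $\Delta$ and the universal property of $\U_{n+1}$, by coefficients already appearing in $T^1_0$, so the vanishing $\delta^{n+1,k}_{0,1} = 0$ propagates to $\delta^{n+1,k}_{m,f} = 0$ for all remaining $(m,f)$, completing the induction.
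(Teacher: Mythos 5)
There is a genuine gap, and it sits exactly where you flag ``the main obstacle.'' Your downward induction asserts that ``the inductive hypothesis kills the correction sum,'' but at that stage you have only shown that the higher-level differences $\beta^{n+1,k}_{p,f'}$ are constants, not that they vanish; since they multiply the nonconstant functions $h^{\psi(f',0,m,2g),m}_p$, the correction sum does not disappear and you cannot conclude $\alpha^{n+1,k}_{m,f}=\beta^{n+1,k}_{m,f}$ on $X\cap Y$. More seriously, the vanishing of the constants for $1\le p\le n$ and $k\notin\F_{n+1}$ is the actual content of the lemma, and your proposed route through the coproduct and the uniqueness-up-to-automorphism clause of Hadian's Lemma is not a proof: that clause gives uniqueness only up to automorphism of filtered logarithmic connections, which is precisely the ambiguity this lemma is meant to eliminate, so invoking it here is circular, and no argument is given for why $\Delta$ would determine the coefficients of $T^f_m$ for $m>0$ from those of $T^1_0$.

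The missing idea is a pole-order argument at $\PP$. In the relation obtained by subtracting the two instances of $(I_m)$, each unknown constant $\beta^{n+1,k}_{p,f'}$ with $f'\in\F_p$ appears multiplied by $h^{\psi(f',0,m,2g),m}_p=h^{t,0}_1$ with $t>g$, a function with a pole of odd order $2(t-g)-1\in\lbrace 1,3,\dots,2g-1\rbrace$ at $\PP$. These odd orders are Weierstrass gaps at $\PP$: every section of $H^0(X,\O_X)=K[x,y]/(y^2-f(x))$ has pole order at $\PP$ lying in $\lbrace 0\rbrace\cup\lbrace 2,4,\dots,2g\rbrace\cup\lbrace o\in\Z : o\ge 2g+1\rbrace$, while the $\beta$-side is regular at $\PP$. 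Equality on $X\cap Y$ is therefore only possible if each constant multiplying such an $h$ vanishes; varying $f$ over $\F_m$ then kills all the level-$(m+1)$ constants, and the residual identity glues the level-$m$ differences to a constant, setting up the next step of the induction. This is how the paper's proof proceeds, with the constancy at level $m$ and the vanishing at level $m+1$ intertwined in a single inductive hypothesis; only the level-$0$ constants are disposed of by the basepoint condition, as you correctly note.
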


\begin{proof}
We suppose that we have a second set of lifts satisfying the conditions $I_m$ of Lemma \ref{Lemma - Conditions on Lifts}:
\begin{align*}
\overline{T}^f_{m}&:=w^f_m + \sum_{l=m+1}^{n} \sum_{k=1}^{(2g)^l}a_{m,f}^{l,k}w^k_l+\sum_{k=1}^{(2g)^{n+1}}\tilde{a}_{m,f}^{n+1,k}w^k_{n+1}\\
\overline{S}^f_{m}&:=w^f_m + \sum_{l=m+1}^{n} \sum_{k=1}^{(2g)^l}b_{m,f}^{l,k}w^k_l+\sum_{k=1}^{(2g)^{n+1}}\tilde{b}_{m,f}^{n+1,k}w^k_{n+1}
\end{align*} $\text{where } m \in \lbrace 0,..,n \rbrace, f \in \F_m$ and $\tilde{a}_{m,f}^{n+1,k},\tilde{b}_{m,f}^{n+1,k}$ are sections in $H^0(X,\O_X)$ and $H^0(Y,\O_Y)$ respectively. First, for $f \in \F_n$ and $k \in \lbrace 1,..,(2g)^{n+1}\rbrace$ we consider the difference
\[
b_{n,f}^{n+1,k}-\tilde{b}_{n,f}^{n+1,k}= a_{n,f}^{n+1,k}-\tilde{a}_{n,f}^{n+1,k}.
\]

Note that since $a_{n,f}^{n+1,k}-\tilde{a}_{n,f}^{n+1,k}\in H^0(X,\O_X)$ and $b_{n,f}^{n+1,k}-\tilde{b}_{n,f}^{n+1,k} \in H^0(Y,\O_Y)$ and these sections agree on $X \cap Y$ they must glue to give a global section in $H^0(C,\O_C)$. Therefore, $a_{n,f}^{n+1,k}-\tilde{a}_{n,f}^{n+1,k}$ and $b_{n,f}^{n+1,k}-\tilde{b}_{n,f}^{n+1,k}$ are constant and equal. 

We now proceed by induction on $m \in \lbrace 0,..,n-1 \rbrace$ with the following induction hypothesis:

\begin{itemize}
\item[$(C_m)$:] For all $f \in \F_{m}$, $f' \in \F_{m+1}$ and $k,k' \in \lbrace 1,..,(2g)^{n+1}\rbrace$ the following hold:
\begin{enumerate}
\item $b_{m,f}^{n+1,k}-\tilde{b}_{m,f}^{n+1,k}= a_{m,f}^{n+1,k}-\tilde{a}_{m,f}^{n+1,k} \in H^0(C,\O_C)=K$
\item $0=b_{m+1,f'}^{n+1,k'}-\tilde{b}_{m+1,f'}^{n+1,k'}= a_{m+1,f'}^{n+1,k'}-\tilde{a}_{m+1,f'}^{n+1,k'} \in H^0(C,\O_C)=K$
\end{enumerate}
\end{itemize}

Take $m=n-1$ as our base case. To show $(C_{n-1})$ we consider for each $f \in \F_{n-1}$ and $k \in \lbrace 1,..,(2g)^{n+1}\rbrace$ the difference

\begin{equation}\label{Equation - HF Proof 1}
b_{n-1,f}^{n+1,k}-\tilde{b}_{n-1,f}^{n+1,k}= a_{n-1,f}^{n+1,k}-\tilde{a}_{n-1,f}^{n+1,k}-\sum_{f' \in \F_n} \left(b_{n,f'}^{n+1,k}-\tilde{b}_{n,f'}^{n+1,k}\right)h^{\psi(f',0,n-1,2g),n-1}_n\tau(f',f,n-1,2g). 
\end{equation}

Recall that $h^{\psi(f',0,n-1,2g),n-1}_n= h^{\psi(f',0,n-1,2g),0}_1$. As $f' \in \F_n$ we find that $g+1 \leq \psi(f',0,n-1,2g) \leq 2g$ for each $f' \in \F_n$. Using Algorithm \ref{Algorithm - Computing GT on UC of HEC} we note that $\alpha_{t-1}$ has a pole of order $2(t-g)$ at $\PP$ and $h^{t,0}_1$ is chosen so that $dh^{t,0}_1-\alpha_{t-1}$ has at worst logarithmic poles at $\PP$ and hence $h^{t,0}_1$ has a pole of order $2(t-g)-1$ at $\PP$ for $t>g$. Hence the order of the pole of $h^{\psi(f',0,n-1,2g),n-1}_n$ at $\PP$ is in $\lbrace 1,3,...,2g-1 \rbrace$.

Note that $x$ has a pole of order $2$ at $\PP$ and $y$ has a pole of order $2g+1$ at $\PP$ and hence $a_{n-1,f}^{n+1,k}-\tilde{a}_{n-1,f}^{n+1,k}$ can only have a pole of order lying in $\lbrace 2,4,..,2g\rbrace \cup \lbrace o \in \Z : o \geq 2g+1 \rbrace$. Since $b_{n-1,f}^{n+1,k}-\tilde{b}_{n-1,f}^{n+1,k}$ is regular at $\PP$ the only way we can have equality over $X \cap Y$ as in (\ref{Equation - HF Proof 1}) is if
\[
\left(b_{n,f'}^{n+1,k}-\tilde{b}_{n,f'}^{n+1,k}\right)\tau(f',f,n-1,2g) =0 
\]for all $f' \in \F_n$ and $k \in \lbrace 1,..,(2g)^{n+1}\rbrace$. By varying over all $f \in \F_{n-1}$ we deduce, therefore, that 
\[
b_{n,f'}^{n+1,k}-\tilde{b}_{n,f'}^{n+1,k}=a_{n,f'}^{n+1,k}-\tilde{a}_{n,f'}^{n+1,k}=0 
\]for all $f' \in \F_n$ and $k \in \lbrace 1,..,(2g)^{n+1}\rbrace$. Finally, this implies that 
\[
b_{n-1,f}^{n+1,k}-\tilde{b}_{n-1,f}^{n+1,k}= a_{n-1,f}^{n+1,k}-\tilde{a}_{n-1,f}^{n+1,k}
\] and as before we conclude that these sections glue to a global section and hence are constant. Therefore, we find that $(C_{n-1})$ holds. By considering the differences $b_{m,f}^{n+1,k}-\tilde{b}_{m,f}^{n+1,k}$ for $f \in \F_m$ and $k \in \lbrace 1,..,(2g)^{n+1}\rbrace$ we conclude that $(C_m)$ holds for all $m$. 

Therefore, for $m \geq 1$ the sections $a_{m,f}^{n+1,k}, b_{m,f}^{n+1,k}$ are unique. Additionally, $b_{0,f}^{n+1,k}-\tilde{b}_{0,f}^{n+1,k}=a_{0,f}^{n+1,k}-\tilde{a}_{0,f}^{n+1,k} \in K$. However, recall that our lifts should be such that $1 \in b^*F^0\UU_{n+1}$. Therefore, these sections must be $0$ at $b$ and hence they must be exactly $0$. Hence, for $m \geq 0$ the sections $a_{m,f}^{n+1,k}, b_{m,f}^{n+1,k}$ are unique and thus the lifts are unique. 
  \end{proof}

The existence of suitable lifts is demonstrated by the following Algorithm.

\begin{algorithm}\label{Algorithm - Compute HF on HEC or EC}
(Computing $F^0\UU_n$ on elliptic and odd hyperelliptic curves $X=C- \lbrace \PP \rbrace$) \newline

\textbf{Input}
\begin{itemize}
\item $C$ a complete elliptic or odd hyperelliptic curve of genus $g$ over a field $K$, $D=\lbrace \PP \rbrace$ and $X:= C-D$ with affine model $y^2=f(x)$ for some $f(x) \in K[x]$ with $\deg f=2g+1$.
\item The logarithmic extension $\UU_n$ of $\U_n$ computed by Algorithm \ref{Algorithm - Compute HF on HEC or EC}.
\item Generators of $F^0\U_n$ as an $\O_X$-module: \index{$a_{m,f}^{l,k}$}
\begin{align*}
w^f_n & \qquad \text{where }f \in \F_n\\
w^f_m + \sum_{l=m+1}^n \sum_{k=1}^{(2g)^l}a_{m,f}^{l,k}w^k_l & \qquad \text{where } m \in \lbrace 0,..,n-1 \rbrace, f \in \F_m 
\end{align*}
and generators of $F^0\U'_n$ as an $\O_Y$-module: \index{$b_{m,f}^{l,k}$}
\begin{align*}
w^f_n & \qquad \text{where }f \in \F_n\\
w^f_m + \sum_{l=m+1}^n \sum_{k=1}^{(2g)^l}b_{m,f}^{l,k}w^k_l & \qquad \text{where } m \in \lbrace 0,..,n-1 \rbrace, f \in \F_m. 
\end{align*}
\end{itemize}

\textbf{Output}

\begin{itemize}
\item Generators of $F^0\U_{n+1}$ as an $\O_X$-module:
\begin{align*}
w^f_n & \qquad \text{where }f \in \F_n\\
w^f_m + \sum_{l=m+1}^{n+1} \sum_{k=1}^{(2g)^l}a_{m,f}^{l,k}w^k_l & \qquad \text{where } m \in \lbrace 0,..,n-1 \rbrace, f \in \F_m 
\end{align*}
and generators of $F^0\U'_{n+1}$ as an $\O_Y$-module:
\begin{align*}
w^f_{n+1} & \qquad \text{where }f \in \F_{n+1}\\
w^f_m + \sum_{l=m+1}^{n+1} \sum_{k=1}^{(2g)^l}b_{m,f}^{l,k}w^k_l & \qquad \text{where } m \in \lbrace 0,..,n \rbrace, f \in \F_m.
\end{align*}
\end{itemize}

\textbf{Algorithm}

\begin{enumerate}
\item[(1)] Fix a local parameter $\pi$ at $\PP$ e.g. $\pi= \frac{x^g}{y}$.
\item[(2)] Compute the $\pi$-expansion of the $\O_X$-sections $x$ and $y$ and each $h^{t,0}_{1}$:
\begin{align*}
x(\pi) & = \chi \pi^{-2} +... \\
y(\pi) & = \gamma \pi^{-(2g+1)}+..\\
h^{t,0}_{1}(\pi) & = \eta^{t,0}_1 \pi^{-(2(t-g)-1)}+..\\
\end{align*}
for some constants $\chi, \gamma, \eta^{t,0}_1 \in K$. 
\item[(3)] For $f \in \F_{n+1}$ do:
\begin{align*}
T^f_{n+1}&:=w^f_{n+1} \\
S^f_{n+1}&:=w^f_{n+1} \\
\end{align*}
\item[(4)] For $f \in \F_n$ do:
\begin{enumerate}
\item For $k \in \F_{n+1}$ do:
\[
a^{n+1,k}_{n,f},b^{n+1,k}_{n,f}:=0
\]
\item For $k \in \lbrace 1,..,(2g)^{n+1} \rbrace - \F_{n+1}$ do:
\[
A^{n+1,k}_{n,f}:=0
\] 
\item Let $m:=n-1$.
\end{enumerate}
\item [(5)] While $m \geq 0$, for $f \in \F_m$ and $k \in \lbrace 1,..,(2g)^{n+1} \rbrace$ do:
\begin{enumerate}
\item[If:] $k \in \F_{n+1}$ do:
\[
a^{n+1,k}_{m,f},b^{n+1,k}_{m,f}:=0
\]
\item[Else:] 
\item Define 
\begin{align*}
s^{n+1,k}_{m,f}& := h^{\psi(k,0,m,2g),m}_{n+1}\tau(k,f,m,2g)+ \sum_{l=m+1}^{n} a^{l,k-(\psi(k,0,l,2g)-1)(2g)^l}_{m,f}h^{\psi(k,0,l,2g),l}_{n+1}\\
& - \sum_{f' \in \F_{m+1}} \left(b_{m+1,f'}^{n+1,k}-\lambda^{n+1,k}_{m+1,f'}\right)h^{\psi(f',0,m,2g),m}_{m+1}\tau(f',f,m,2g)\\
&- (1- \delta_{m,n-1})\sum_{p=m+2}^{n}\sum_{f' \in \F_p} b_{p,f'}^{n+1,k}h^{\psi(f',0,m,2g),m}_p\tau(f',f,m,2g).
\end{align*}
\item Compute the $\pi$-expansion of $s^{n+1,k}_{m,f}$.
\item Compute $A^{n+1,k}_{m,f}(x,y) \in K[x,y]/(y^2-f(x))$ such that 
\[A^{n+1,k}_{m,f}(x(\pi),y(\pi))+s^{n+1,k}_{m,f}(\pi) \equiv 0 \mod \pi^{-(2g-1)}K[[\pi]].\]
\item Suppose that 
\[
A^{n+1,k}_{m,f}(x(\pi),y(\pi))+s^{n+1,k}_{m,f}(\pi) = \mu_{2g-1} \pi^{-(2g-1)}+...
\]
Then define $ \lambda^{n+1,k}_{m+1,f+(2g-1)(2g)^m}:= \frac{\mu_{2g-1}}{\eta^{2g,0}_{1}}$. 
\item Define 
\[
a^{n+1,k}_{m+1,f+(2g-1)(2g)^m}:=A^{n+1,k}_{m+1,f+(2g-1)(2g)^m}+\lambda^{n+1,k}_{m+1,f+(2g-1)(2g)^m}.
\]
\item Suppose that 
\begin{align*}
&A^{n+1,k}_{m,f}(x(\pi),y(\pi))+s^{n+1,k}_{m,f}(\pi) - \lambda^{n+1,k}_{m+1,f+(2g-1)(2g)^m}h^{2g,m}_{m+1}(\pi)\\ 
& = \mu_{2g-2} \pi^{-(2g-2)}+...
\end{align*}

Then define $A^{n+1,k}_{m,f}:=A^{n+1,k}_{m,f}(x,y)-\frac{\mu_{2g-2}}{\chi^{g-1}}x^{g-1}$. 
\item Let $\tilde{t}=2g-1$. While $\tilde{t} \neq g+1$ do 
\begin{enumerate}
\item[(i)] Suppose that 
\begin{align*}
&A^{n+1,k}_{m,f}(x(\pi),y(\pi))+s^{n+1,k}_{m,f}(\pi) -\sum_{t=\tilde{t}+1}^{2g} \lambda^{n+1,k}_{m+1,f+(t-1)(2g)^m}h^{t,m}_{m+1}(\pi) \\
& = \mu_{2(\tilde{t}-g)-1}\pi^{-(2(\tilde{t}-g)-1)}+..
\end{align*}
Then define $ \lambda^{n+1,k}_{m+1,f+(\tilde{t}-1)(2g)^m}:= \frac{\mu_{2(\tilde{t}-g)-1}}{\eta^{\tilde{t},m}_{m+1}}$.
\item[(ii)] Define 
\[
a^{n+1,k}_{m+1,f+(\tilde{t}-1)(2g)^m}:=A^{n+1,k}_{m+1,f+(\tilde{t}-1)(2g)^m}+\lambda^{n+1,k}_{m+1,f+(\tilde{t}-1)(2g)^m}.
\]
\item[(iii)] Suppose that 
\begin{align*}
&A^{n+1,k}_{m,f}(x(\pi),y(\pi))+s^{n+1,k}_{m,f}(\pi) -\sum_{t=\tilde{t}}^{2g} \lambda^{n+1,k}_{m+1,f+(t-1)(2g)^m}h^{t,0}_{1}(\pi)\\
&= \mu_{2(\tilde{t}-g)-2}\pi^{-(2(\tilde{t}-g)-2)}+..
\end{align*}

Then define $A^{n+1,k}_{m,f}:=A^{n+1,k}_{m,f}(x,y)-\frac{\mu}{\chi^{\tilde{t}-g-1}}x^{\tilde{t}-g-1}$.
\item[(iv)] $\tilde{t}:=\tilde{t}-1$. 
\end{enumerate}
\item Repeat Step (5) (g) (i) for $\tilde{t}=g+1$.
\item Let $m:=m-1$.
\end{enumerate}
\item[(6)] For $k \in \lbrace 1,..,(2g)^{n+1} \rbrace - \F_{n+1}$ do:
\[
a^{n+1,k}_{0,1}:= A^{n+1,k}_{0,1}(x,y)-A^{n+1,k}_{0,1}(x(b),y(b))
\]
\item[(7)] For $k \in \lbrace 1,..,(2g)^{n+1} \rbrace - \F_{n+1}$ do:
\begin{enumerate}
\item[(i)] For $f \in \F_n$ define $b^{n+1,k}_{n,f}$ by
\[
b^{n+1,k}_{n,f}=a^{n+1,k}_{n,f}+s^{n+1,k}_{n,f}.
\] 
\item[(ii)] For $m \in \lbrace 0,..,n-1\rbrace$ and $f \in \F_m$ define $b^{n+1,k}_{m,f}$ by
\[
b^{n+1,k}_{m,f}=a^{n+1,k}_{m,f}+s^{n+1,k}_{m,f} -\sum_{t=g+1}^{2g} \lambda^{n+1,k}_{m+1,f+(t-1)(2g)^m}h^{t,m}_{m+1}.
\] 
\end{enumerate}
\end{enumerate}
\end{algorithm}

\begin{proof}
We shall now show that the lifts $S^f_m,T^f_m$ exist. Before we start, recall that in the proof of Lemma \ref{Lemma - Conditions on Lifts} we observed when taking the lifts $T^f_m,S^f_m$ that we could assume $a^{n+1,k}_{m,f},b^{n+1,k}_{m,f}$ are both $0$ if $k \in \F_{n+1}$. Henceforth we shall assume that $k \not \in \F_{n+1}$. We proceed inductively with the following inductive hypothesis:

\begin{itemize}
\item[$(D_m)$:] For $f \in \F_m$ and $k \in \lbrace 1,..,(2g)^{n+1}\rbrace - \F_{n+1}$ the following hold:
\begin{enumerate}
\item There exists an $A^{n+1,k}_{m,f}(x,y) \in K[x,y]/(y^2-f(x))$ such that $\lambda^{n+1,k}_{m,f}:=a^{n+1,k}_{m,f}-A^{n+1,k}_{m,f}(x,y)$ is constant
\item The constant $\lambda^{n+1,k}_{p,f}$ is known for $p > m+1$. 
\end{enumerate}
\end{itemize}

First we consider condition $(I_n)$. Take $f \in \F_n$ and $k \in \lbrace 1,..,(2g)^{n+1}\rbrace - \F_{n+1}$. Observe that in condition $(I_n)$ the expression 
\[ s^{n+1,k}_{n,f}:=h^{\psi(k,0,n,2g),n}_{n+1}\tau(k,f,m,2g)- \sum_{f' \in \F_{n+1}} h^{\psi(f',0,n,2g),n}_{n+1}\tau(f',f,m,2g) \delta_{f',k}\] is known. We shall now analyse the possible orders of poles of $s^{n+1,k}_{n,f}$ at $\PP$. 

Since $k \not \in \F_{n+1}$ we conclude that $s^{n+1,k}_{m,f}= h^{\psi(k,0,n,2g),n}_{n+1}\tau(k,f,m,2g)$. Since $f \in \F_n$ and $k \not \in \F_{n+1}$ by Lemma \ref{Definition - Hodge Filtration on Vdr} we conclude that $\psi(k,0,n,2g) \in \lbrace 1,..,g \rbrace$ and, hence, each $h^{\psi(k,0,n,2g),n}_{n+1}$ is regular at $\PP$. Therefore, each $s^{n+1,k}_{n,f}$ is regular at $\PP$. 

As $b^{n+1,k}_{n,f}=a^{n+1,k}_{n,f}+ s^{n+1,k}_{n,f}$ on $X \cap Y$ and both $b^{n+1,k}_{n,f}$ and $s^{n+1,k}_{n,f}$ are regular at $\PP$ we conclude that $a^{n+1,k}_{n,f}$ must be constant as all the non-constant sections in $H^0(X,\O_X)$ have poles of order at least $2$ at $\PP$. We let $a^{n+1,k}_{n,f}=\lambda^{n+1,k}_{n,f} \in K$ be arbitrary for now. 

Observe that for $f \in \F_n$ and $k \in \lbrace 1,..,(2g)^{n+1}\rbrace - \F_{n+1}$ we have $A^{n+1,k}_{n,f}= 0$ and $(D_n)$ holds trivially. Now we consider condition $(I_m)$. Suppose we have shown that $(D_{m+1})$ holds. We define $s^{n+1,k}_{m,f}$ for  $f \in \F_m$ and $k \not \in \F_{n+1}$ as in Step (5)(a) of the Algorithm. Then rearranging condition $(I_m)$ we find that \begin{equation}\label{Equation - Hodge Proof Eqn 2}b^{n+1,k}_{m,f}=a^{n+1,k}_{m,f}+s^{n+1,k}_{m,f} -\sum_{f' \in \F_{m+1}} \lambda^{n+1,k}_{m+1,f'}h^{\psi(f',0,m,2g),m}_{m+1}\tau(f',f,m,2g).\end{equation} Observe that for $p > m+1$ the expansions in the fixed local parameter $\pi$ of $b_{p,f'}^{n+1,k}$ appearing in $s^{n+1,k}_{m,f}$ are known. This follows from a simple induction on $p$ together with the original induction hypothesis $(D)$ upon writing $b_{p,f'}^{n+1,k}$ as follows:

\begin{align*}
b_{p,f'}^{n+1,k}&= A^{n+1,k}_{p,f'}(x,y)+\lambda^{n+1,k}_{p,f'}+s^{n+1,k}_{p,f'} -\sum_{f^* \in \F_{m+1}} \lambda^{n+1,k}_{p+1,f^*}h^{\psi(f^*,0,m,2g),m}_{m+1}\tau(f^*,f',m,2g)
\end{align*}

Similarly, since we know by our inductive hypothesis that $a_{m+1,f'}^{n+1,k}=A_{m+1,f'}^{n+1,k}+\lambda_{m+1,f'}^{n+1,k}$ for some unknown constant $\lambda_{m+1,f'}^{n+1,k}$ it follows that $b_{m+1,f'}^{n+1,k}-\lambda_{m+1,f'}^{n+1,k}$ is known. Therefore, we conclude that we may compute the $\pi$-expansion of $s_{m+1,f'}^{n+1,k}$.

From the definition of $\tau$ and Lemma $\ref{Lemma - Hodge Basis for Vdr}$ we may express (\ref{Equation - Hodge Proof Eqn 2}) as 

\[
b^{n+1,k}_{m,f}=a^{n+1,k}_{m,f}+s^{n+1,k}_{m,f} -\sum_{t=g+1}^{2g} \lambda^{n+1,k}_{m+1,f+(t-1)(2g)^m}h^{t,m}_{m+1}.
\] 

Recall that $h^{t,m}_{m+1}=h^{t,0}_1$ has a pole of order $2(t-g)-1$ at $\PP$. The orders of the poles at $\PP$ of the $s^{n+1,k}_{m,f}, h^{t,m}_{m+1}$ force the choices made in the above iterative process. As we vary over all $f \in \F_m$ we will compute all $\lambda^{n+1,k}_{m,f'}$. Finally, we deduce that $a^{n+1,k}_{m,f}$ must be of the form $A^{n+1,k}_{m,f}+\lambda^{n+1,k}_{m,f}$ for some constant $\lambda^{n+1,k}_{m,f}$. So we have shown that $(D_{m+1}) \Rightarrow (D_m)$ and by induction $(D_m)$ is true for all $m$. Hence, using the above iterative process we have computed $a^{n+1,k}_{m,f}$ for all $m>0$ and $f \in \F_m$. We know for $f \in \F_0=\lbrace 1 \rbrace$ that $a^{n+1,k}_{0,f} =A^{n+1,k}_{0,f}+\lambda^{n+1,k}_{0,f}$ for some known $A^{n+1,k}_{0,f}(x,y) \in K[x,y]/(y^2-f(x))$ and a constant $\lambda^{n+1,k}_{0,f} \in K$ to be determined. Since we require that $1 \in b^*F^0\UU_n$ we must have for all $k$ that $a^{n+1,k}_{0,f}(b)=0$. Hence, we require that $\lambda^{n+1,k}_{0,f}=-A^{n+1,k}_{0,f}(x(b),y(b))$. 
  \end{proof}

\begin{proof}[Theorem \ref{Theorem - Hodge Filtration on HEC or EC}]
We proceed by induction. Following Example \ref{Example - Level 1} we know that the statement of the theorem is true for $n=1$ so we now suppose that it is true for some $n$. That is, we have unique generators for $F^0\UU_n$ over $X$ and $Y$ respectively that lift generators of $F^0\UU_{n-1}$. By Lemma \ref{Lemma - Conditions on Lifts}, Lemma \ref{Lemma - Uniqueness of Lifts} and Algorithm \ref{Algorithm - Compute HF on HEC or EC} there exist unique generators of a bundle $F^0\UU_{n+1}$ lifting generators of $F^0\UU_n$ which satisfy the conditions of Lemma \ref{Lemma - Hadian's Lemma} (Hadian's lemma). 
For $p<0$ we now proceed to show the existence and uniqueness of $F^p\UU_n$ using the approach used above together with Griffith's transversality on the generators of $F^0\UU_n$. 
  \end{proof}

\subsection{An application of Theorem 4 to elliptic curves}

In this subsection we focus on the case where $C$ is a complete elliptic curve. We apply Algorithm \ref{Algorithm - Compute HF on HEC or EC} to compute the generators of $F^0\UU_n$. 

\begin{remark} \label{Remark - Dropping subscript f}
Before we proceed note that in the case that $C$ is an elliptic curve $\F_n = \lbrace 2^n \rbrace$. Therefore, we can dispense with identifying a distinguished $f \in \F_n$ and, as such, we will drop the subscript $f$ throughout this section. 
\end{remark}

To describe the algorithm in question we will first give a restatement of the conditions $I_m$ appearing in the proof of Theorem \ref{Theorem - Hodge Filtration on HEC or EC}. 

\begin{lemma}\label{Lemma - Conditions for EC}
When $C$ is an elliptic curve the conditions $I_m$ of Lemma \ref{Lemma - Conditions on Lifts} may be restated as follows:
\begin{itemize}\index{$I_m$}
\item[$I_n$:] For $k \in \lbrace 1,..,2^{n+1}-1\rbrace$ \[b_{n}^{n+1,k}=a_{n}^{n+1,k} + h^{1,n}_{n+1}\delta_{k,2^n}\]
\item[$I_m$:] For $k \in \lbrace 1,..,2^{n+1}-1\rbrace$ \begin{align*}
b_{m}^{n+1,k} = &a_{m}^{n+1,k} + \sum^{2^{n+1-m}-1}_{t=1}h^{t,m}_{n+1}\delta_{k,t2^m}+ \sum_{l=m+1}^{n} a^{l,k-(\psi(k,0,l,2g)-1)(2g)^l}_{m}h^{\psi(k,0,l,2g),l}_{n+1} \\
&- \sum_{p=m+1}^{n}b^{n+1,k}_p h^{2^{p-m},m}_p
\end{align*}
\end{itemize} where $m \in \lbrace 0,..,n-1 \rbrace$.
\end{lemma}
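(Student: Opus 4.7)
The plan is to directly specialize the general formulas in Lemma \ref{Lemma - Conditions on Lifts} to the elliptic case $g = 1$, where the Hodge-basis labelling set $\F_n$ collapses to a single element. By Lemma \ref{Lemma - Hodge Basis for Vdr}, when $g = 1$ each component $f_i$ of an element of $\F_n$ is forced to equal $g = 1$, so $\F_n = \{2^n\}$ for all $n \geq 1$; accordingly the distinguished $f \in \F_m$ is $f = 2^m$ and may be suppressed from the notation as in Remark \ref{Remark - Dropping subscript f}. Moreover, since $\F_{n+1} = \{2^{n+1}\}$ and the construction of the lifts already forces $a^{n+1, 2^{n+1}}_m = b^{n+1, 2^{n+1}}_m = 0$, the only informative values of $k$ are $k \in \{1, \ldots, 2^{n+1} - 1\}$, which explains the narrower index set appearing in the restated lemma.

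Next I would verify that three of the summand types simplify as claimed. For $k$ in this reduced range, the sum $\sum_{f' \in \F_{n+1}} h^{\psi(f',0,m,2),m}_{n+1}\, \tau(f', 2^m, m, 2)\, \delta_{f', k}$ has only one term $f' = 2^{n+1}$ and its Kronecker delta vanishes, so this contribution disappears from both $I_n$ and $I_m$. For the remaining $h$-coefficient term $h^{\psi(k,0,m,2),m}_{n+1}\,\tau(k, 2^m, m, 2)$, a direct evaluation via Definitions \ref{Definition - Auxiliary Functions} and \ref{Definition - Third Auxiliary Function} shows that $\tau(k, 2^m, m, 2) = 1$ precisely when $2^m \mid k$, in which case $\psi(k, 0, m, 2) = k/2^m$. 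Parametrising $k = t \cdot 2^m$ for $t \in \{1, \ldots, 2^{n+1-m} - 1\}$ (the excluded value $t = 2^{n+1-m}$ corresponding to the forbidden $k = 2^{n+1}$) rewrites this term as $\sum_{t=1}^{2^{n+1-m}-1} h^{t, m}_{n+1}\, \delta_{k, t 2^m}$; specialising to $m = n$ collapses the sum to the single term $h^{1,n}_{n+1}\,\delta_{k, 2^n}$ appearing in $I_n$.

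Finally I would simplify the last summand $\sum_{p=m+1}^n \sum_{f' \in \F_p} b^{n+1,k}_{p,f'}\, h^{\psi(f',0,m,2),m}_p\, \tau(f', 2^m, m, 2)$. With $\F_p = \{2^p\}$ the inner sum reduces to the single term $f' = 2^p$, for which $\psi(2^p, 0, m, 2) = 2^{p-m}$ and $\tau(2^p, 2^m, m, 2) = 1$ identically, so the expression collapses to $\sum_{p=m+1}^n b^{n+1,k}_p\, h^{2^{p-m}, m}_p$. The third summand $\sum_{l=m+1}^n a^{l, k - (\psi(k,0,l,2g)-1)(2g)^l}_m\, h^{\psi(k,0,l,2g), l}_{n+1}$ carries over unchanged (noting $(2g)^l = 2^l$ when $g = 1$), and the corresponding summations $\sum_{l=m+1}^n$ and $\sum_{p=m+1}^n$ at $m = n$ are empty. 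Reassembling these simplifications in the statement of Lemma \ref{Lemma - Conditions on Lifts} yields the restated conditions $I_n$ and $I_m$. The argument is an elementary unwinding of definitions; the only mild subtlety is the need to exclude $k = 2^{n+1}$, for which both the unsimplified $h\tau$-term and the vanishing $\F_{n+1}$-sum cancel so that the original identity reduces to the trivial $0 = 0$.
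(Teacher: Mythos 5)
Your proposal is correct and follows essentially the same route as the paper's proof: restrict to $k<2^{n+1}$ (the case $k=2^{n+1}$ being trivial), then unwind the definitions of $\psi$ and $\tau$ in the case $g=1$, $\F_m=\{2^m\}$, using exactly the observations that $\tau(k,2^m,m,2)=1$ iff $2^m\mid k$ with $\psi(k,0,m,2)=k/2^m$, and that $\psi(2^p,0,m,2)=2^{p-m}$ with $\tau(2^p,2^m,m,2)=1$. Your write-up is somewhat more detailed than the paper's (which states the key evaluations of $\psi$ and $\tau$ without spelling out the reparametrisation $k=t2^m$), but the content is the same.
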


\begin{proof}
Recall that if $k = 2^{n+1}$ then $b_{m}^{n+1,k}=a_{m}^{n+1,k}=0$ and so we consider only the cases where $k < 2^{n+1}$. The remaining cases are simple to deal with by observing the following: $\tau(k,2^n,n,2)=1$ if $k = 2^n+(\psi(k,0,n,2)-1)2^n = \psi(k,0,n,2)2^n$ and is $0$ otherwise; since $k < 2^{n+1}$ and $\psi(k,0,n,2) \in \lbrace 1,2 \rbrace$; $\psi(2^p,0,m,2)= 2^{p-m}$ by definition of $\psi$; and, finally, $\tau(2^p,2^m,m,2)=1$.
  \end{proof}

Using the above formulation we will now make several explicit applications of Theorem \ref{Theorem - Hodge Filtration on HEC or EC} to compute $F^0\U_n$ on an elliptic curve $C$ for several new values of $n$. The case when $n=1$ is known by Example \ref{Example - Level 1}. In \cite[Lemma 3.2]{kim10} Kim shows that $F^0\U_2$ is generated by $1,A_1,A_1^2$ and so we expect that Algorithm \ref{Algorithm - Compute HF on HEC or EC} should reproduce this as its output. 

\begin{proposition} \label{Proposition - HF on UC of EC Level 2}
The generators of $F^0\UU_2$ over both $X$ and $Y$ are $1, A_1, A_1^2$.
\end{proposition}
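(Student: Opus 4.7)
The plan is to verify the proposition by running Algorithm~\ref{Algorithm - Compute HF on HEC or EC} with $n = 1$, taking as input the level-$1$ Hodge filtration computed in Example~\ref{Example - Level 1} (so that the generators of $F^0\UU_1$ over both $X$ and $Y$ are $1$ and $A_1$, with all level-$1$ correction coefficients $a^{1,k}_{0,1}$ and $b^{1,k}_{0,1}$ vanishing) together with the gauge transformation $G_2$ computed in Proposition~\ref{Proposition - GT for EC Level 2}. The key data to extract from $G_2$ is that the last column of $H_2$ has $h^{1,0}_2 = h^{2,0}_2 = h^{3,0}_2 = 0$ with only $h^{4,0}_2 = \tfrac{1}{2}F^2$ non-zero, while the penultimate columns record $h^{1,1}_2 = 0$ and $h^{2,1}_2 = F$.

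By Remark~\ref{Remark - Dropping subscript f} and Lemma~\ref{Lemma - Hodge Basis for Vdr}, $\F_2 = \{4\}$ picks out the single basis element $w^4_2 = A_1^2$ of $F^0V_{dr}^{\otimes 2}$, which enters $F^0\UU_2$ with no correction by construction. It therefore remains to show that the correction coefficients $a^{2,k}_{m,f}$ and $b^{2,k}_{m,f}$ for $k \in \{1,2,3\}$, $m \in \{0,1\}$, and $f \in \F_m$ all vanish. I would carry this out using the elliptic-curve formulation of the descent conditions in Lemma~\ref{Lemma - Conditions for EC}.

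At the inner level $m = 1$, the condition reads $b^{2,k}_1 = a^{2,k}_1 + h^{1,1}_2 \delta_{k,2}$; since $h^{1,1}_2 = 0$, the sections $a^{2,k}_1 \in H^0(X,\O_X)$ and $b^{2,k}_1 \in H^0(Y,\O_Y)$ agree on $X \cap Y$, glue to a global section of $\O_C = K$, and so are equal to a common constant $c_k$. Plugging this into the outer-level condition $I_0$, together with the vanishing of $h^{k,0}_2$ for $k \in \{1,2,3\}$ and of the relevant $a^{1,k}_0$, reduces it to $b^{2,k}_0 = a^{2,k}_0 - c_k F$. Since $F$ has a simple pole at $\PP$, whereas $b^{2,k}_0$ is regular there and $a^{2,k}_0 \in H^0(X,\O_X)$ has only poles of order $\geq 2$ at $\PP$, the pole-order argument from the proof of Lemma~\ref{Lemma - Uniqueness of Lifts} forces $c_k = 0$. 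Consequently $b^{2,k}_0 = a^{2,k}_0$ on $X \cap Y$, they glue to a constant, and the pointing condition $1 \in b^*F^0\UU_2$ from Lemma~\ref{Lemma - Hadian's Lemma} pins this constant to zero.

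The only real obstacle is bookkeeping: extracting the correct entries of $H_2$ from Proposition~\ref{Proposition - GT for EC Level 2} and tracking the indices $\psi, \tau, \phi$ carefully so as to confirm that $s^{2,k}_{m,f}$ actually collapses for every $k \in \{1,2,3\}$. Once this is done no further computation is required, and the generators of $F^0\UU_2$ over $X$ and $Y$ are $1, A_1, A_1^2$, recovering Kim's result in \cite[Lemma~3.2]{kim10}.
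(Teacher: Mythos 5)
Your proposal is correct and follows essentially the same route as the paper's proof: it feeds the level-$1$ data and the gauge transformation $G_2$ into the descent conditions of Lemma \ref{Lemma - Conditions for EC}, obtains $b^{2,k}_1 = a^{2,k}_1$ and $b^{2,k}_0 = a^{2,k}_0 - F b^{2,k}_1$ on $X \cap Y$, and then uses the gluing-to-a-global-section argument, the simple pole of $F$ versus the pole orders $\geq 2$ of nonconstant sections of $\O_X$ at $\PP$, and the pointing condition $1 \in b^*F^0\UU_2$ to force all correction coefficients to vanish. The bookkeeping details you flag ($h^{1,1}_2 = 0$, $h^{t,0}_2 = 0$ for $t \leq 3$, $\F_2 = \{4\}$) are exactly the ones the paper uses.
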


\begin{proof}
By Proposition \ref{Proposition - GT for EC Level 2} we have that the gauge transformation $G_2$ is of the form 

\[
G_2 = \left( 	\begin{array}{cc}
					I_{4 \times 4} & H_2 \\
					0_{3 \times 4} & G_1 
				\end{array}\right), \; \; H_2 = \left( 	\begin{array}{ccc}
					0 & 0 & 0 \\
					0 & 0 & 0 \\
					F & 0 & 0 \\
					0 & F & \frac{1}{2}F^2 
				\end{array} \right) 
\]
with respect to the basis $\B_2$, where $F \in K(C)$ is such that $- \alpha_1 + dF$ is regular at $\PP$. In Example \ref{Example - Level 1} we saw that $F^0\UU_1$ is generated by $1$ and $A_1$. We look for lifts of these generators over $X$:
\[
1 + \sum_{k=1}^{3}a^{2,k}_0 w^k_2, \quad A_1 + \sum_{k=1}^{3} a^{2,k}_1 w^k_2, \quad A_1^2
\]
and over $Y$:
\[
1 + \sum_{k=1}^{3}b^{2,k}_0 w^k_2, \quad A_1 + \sum_{k=1}^{3} b^{2,k}_1 w^k_2, \quad A_1^2
\]
where $a^{2,k}_m \in H^0(X,\O_X)$ and $b^{2,k}_m \in H^0(Y,\O_Y)$. Using Lemma \ref{Lemma - Conditions for EC}  these sections must satisfy the following conditions on restriction to $X \cap Y$:
\begin{center}
\begin{tabular}{cc}
    \multicolumn{2}{c}{}\\
 $I_1: \; b^{2,k}_1 = a^{2,k}_1 $ & $I_0: \; b^{2,k}_0 = a^{2,k}_0 - Fb^{2,k}_1$\\
\end{tabular}
\end{center}

We conclude that $a^{2,k}_1=b^{2,k}_1 \in K$ are both constant. Since $F$ must have a simple pole at $\PP$, $a^{2,k}_0$ cannot have a simple pole at $\PP$ and $b^{2,k}_0$ must be regular at $\PP$ we conclude that $a^{2,k}_1=b^{2,k}_1=0$. Therefore, $a^{2,k}_0=b^{2,k}_0\in K$ are constant and following Step (6) of Algorithm \ref{Algorithm - Compute HF on HEC or EC} we conclude these sections must also be $0$. Therefore, $a^{2,k}_m=b^{2,k}_m=0$ for all $k$ and we are done. 
  \end{proof}

\begin{proposition} \label{Proposition - HF on UC of EC Level 3}
The generators of $F^0\UU_3$ over both $X$ and $Y$ are 

\[
1, \; A_1 + \lambda [[A_0,A_1],A_1], \;  A_1^2, \; A_1^3
\]
where $[A_i,A_j]=A_iA_j-A_jA_i$ is the commutator bracket and $\lambda$ is as in Proposition \ref{Proposition - GT on UC of EC Level 3}.
\end{proposition}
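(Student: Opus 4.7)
The plan is to apply Algorithm~\ref{Algorithm - Compute HF on HEC or EC} with $n=2$, building on the level-$2$ generators $1, A_1, A_1^2$ from Proposition~\ref{Proposition - HF on UC of EC Level 2} and the explicit gauge transformation $G_3$ from Proposition~\ref{Proposition - GT on UC of EC Level 3}. Since $\F_3 = \{8\}$, I adjoin $A_1^3 = w^8_3$ directly, and lift each of the three level-$2$ generators arbitrarily by adding linear combinations $\sum_{k=1}^{7} a_m^{3,k} w^k_3$ over $X$, with analogous coefficients $b_m^{3,k}$ over $Y$. Using the iterative relation $h^{t,i}_{n+1} = h^{t,i-1}_n$ from Algorithm~\ref{Algorithm - Computing GT on UC of EC} together with the values recorded in Example~\ref{Example - Level 1 Extension EC} and the proof of Proposition~\ref{Proposition - GT for EC Level 2}, only a handful of the $h^{t,i}_3$ entries are nonzero; in particular $h^{4,1}_3 = \tfrac{1}{2}F^2$, and from the last column of $H_3$ in Proposition~\ref{Proposition - GT on UC of EC Level 3} one reads off $h^{4,0}_3 = \lambda F$, $h^{6,0}_3 = -2\lambda F$, $h^{7,0}_3 = \lambda F$, $h^{8,0}_3 = \tfrac{1}{6}F^3$.

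Next I invoke the conditions $I_2, I_1, I_0$ of Lemma~\ref{Lemma - Conditions for EC}. These simplify considerably because the level-$2$ lift coefficients from Proposition~\ref{Proposition - HF on UC of EC Level 2} all vanish and the relevant $h$-values appearing in $I_1$ and $I_2$ are zero. The central observation is a pole-order argument at $\PP$: functions in $H^0(X, \O_X)$ have pole orders at $\PP$ lying in $\{0, 2, 3, 4, \ldots\}$ (since $x$ and $y$ have poles of orders $2$ and $3$ respectively), while $F$ has a simple pole. Consequently, whenever one of these equations on $X \cap Y$ equates a section regular on $Y$ to the sum of a section on $X$ and a $K$-linear multiple of $F$, that multiple must vanish. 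Applied to $I_2$ and $I_1$ in turn, this forces $a_2^{3,k} = b_2^{3,k} = 0$ for all $k$, and shows that $a_1^{3,k} = b_1^{3,k}$ is a global constant for each $k$.

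Condition $I_0$, which after these simplifications reads $b_0^{3,k} = a_0^{3,k} + h^{k,0}_3 - b_1^{3,k} F$ on $X \cap Y$, applies the same principle: the quantity $h^{k,0}_3 - b_1^{3,k} F$ must be regular at $\PP$, which forces $b_1^{3,4} = \lambda$, $b_1^{3,6} = -2\lambda$, $b_1^{3,7} = \lambda$, and $b_1^{3,k} = 0$ otherwise. These constants reassemble into
\[
A_1 + \lambda\bigl(A_0 A_1^2 - 2 A_1 A_0 A_1 + A_1^2 A_0\bigr) = A_1 + \lambda\,[[A_0, A_1], A_1].
\]
With the multiples of $F$ now cancelled, the residual relation $b_0^{3,k} = a_0^{3,k}$ glues to a global constant, which the basepoint normalisation $a_0^{3,k}(b) = 0$ from Step~(6) of Algorithm~\ref{Algorithm - Compute HF on HEC or EC} pins to zero. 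The $Y$-side generators then coincide with the $X$-side ones because all matching constants agree. The main obstacle is the pole-order bookkeeping at $\PP$, which hinges on the absence of functions in $H^0(X, \O_X)$ with a pole of order exactly one, so that any stray multiple of $F$ on the $Y$-side of the transition isomorphism must be absorbed by adjusting the single free constant $b_1^{3,k}$.
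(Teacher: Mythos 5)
Your proposal is correct and follows exactly the route the paper intends: the paper's own proof is the one-line remark that this is ``a straightforward application'' of Proposition \ref{Proposition - GT on UC of EC Level 3}, Algorithm \ref{Algorithm - Compute HF on HEC or EC}, Lemma \ref{Lemma - Conditions for EC} and Proposition \ref{Proposition - HF on UC of EC Level 2}, and you have carried out that application in full, with the correct $h^{k,0}_3$ read off from $H_3$, the correct simplified forms of $I_2$, $I_1$, $I_0$, and the right pole-order argument at $\PP$ (no function in $H^0(X,\O_X)$ has a simple pole) pinning $b^{3,k}_1$ to $\lambda,-2\lambda,\lambda$ at $k=4,6,7$ and everything else to zero. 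The identification $A_0A_1^2-2A_1A_0A_1+A_1^2A_0=[[A_0,A_1],A_1]$ then gives exactly the stated generators.
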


\begin{proof}
This is a straightforward application using Proposition \ref{Proposition - GT on UC of EC Level 3}, Algorithm \ref{Algorithm - Compute HF on HEC or EC}, Lemma \ref{Lemma - Conditions for EC} and the filtration computed in Proposition \ref{Proposition - HF on UC of EC Level 2}. 
  \end{proof}

\begin{remark}
It is at this stage that we see some dependency in the Hodge filtration on the choice of basis of $H^1_{dr}(X)$. If we let $\alpha_0=\frac{dx}{y}$ and $\alpha_1=\frac{xdx}{y}$ then we may take $F=\frac{y}{x}$. In this case $\lambda=-2$. 
\end{remark}

\begin{proposition} \label{Proposition - HF on UC of EC Level 4}
The generators of $F^0\U_4$ as an $\O_X$-module are

\[
1 +\frac{1}{3}\nu (x-x(b)) [A_1,[A_1,[A_1,A_0]]]
\]

\[
A_1 +\lambda [[A_0,A_1],A_1] -(\mu + \frac{1}{3}\kappa)[A_1,[A_1,[A_1,A_0]]]
\]

\[
A_1^2 +\lambda [[A_0,A_1],A_1^2], \; \; A_1^3, \; \; A_1^4
\]
where $\lambda, \mu$ are as in Proposition \ref{Proposition - GT on UC of EC Level 4} and $\kappa,\nu \in K$ are constants such that $\nu x+ \lambda F^2$ and $\nu x + \kappa F+\lambda F^2$ are regular at $\PP$. 
\end{proposition}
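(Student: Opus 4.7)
The plan is to apply Algorithm \ref{Algorithm - Compute HF on HEC or EC} at level $n=4$, taking as input the level-$3$ Hodge filtration from Proposition \ref{Proposition - HF on UC of EC Level 3} together with the gauge transformation $G_4$ computed in Proposition \ref{Proposition - GT on UC of EC Level 4}. First I would lift each of the four generators of $F^0\UU_3$ over $X$ and $Y$ by adjoining unknown coefficients $a^{4,k}_m \in H^0(X,\O_X)$ and $b^{4,k}_m \in H^0(Y,\O_Y)$ for the words $w^k_4$ with $k \notin \F_4 = \lbrace 16 \rbrace$; the remaining generator $A_1^4$ is supplied by $F^0 V_{dr}^{\otimes 4} \otimes \O_C$. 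Using Lemma \ref{Lemma - Conditions for EC} one then writes down the explicit relations $I_3, I_2, I_1, I_0$ between the $a^{4,k}_m$ and $b^{4,k}_m$ on $X \cap Y$, the functions $h^{t,m}_4$ appearing in them being precisely the entries of $H_4$ already recorded in Proposition \ref{Proposition - GT on UC of EC Level 4}.

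The substance is then to solve these relations by matching poles at $\PP$. Following Algorithm \ref{Algorithm - Compute HF on HEC or EC}, I would fix a local parameter $\pi$ at $\PP$ and descend from $m=3$ down to $m=0$, exploiting the fact that each $b^{4,k}_m$ is regular at $\PP$ while sections of $\O_X$ can only have pole orders at $\PP$ in $\lbrace 0,2,3,4,\ldots\rbrace$. Non-trivial contributions occur exactly where the right-hand side of $I_m$ carries a pole that cannot be absorbed by a regular function alone. For the lift of $1$, the double pole of $\tfrac{1}{2}\lambda F^2$ in $G_4(1)$ forces the introduction of a $\nu x$ term with $\nu$ chosen so that $\nu x + \lambda F^2$ is regular; the residual simple pole is then cancelled in the lift of $A_1$ by a $\kappa F$-correction with $\kappa$ chosen so that $\nu x + \kappa F + \lambda F^2$ is regular. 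Combined with the $\mu F$-terms already present in $G_4(1)$, these corrections produce the commutator $[A_1,[A_1,[A_1,A_0]]]$ with the stated coefficients, because the relative weights of the length-$4$ monomials $A_0A_1^3,\; A_1A_0A_1^2,\; A_1^2A_0A_1,\; A_1^3A_0$ appearing in $G_4(1)$ combine after regrouping to precisely $A_1^3A_0 - 3A_1^2A_0A_1 + 3A_1A_0A_1^2 - A_0A_1^3$.

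The lift of $A_1^2$ inherits the term $\lambda[[A_0,A_1],A_1^2]$ from the level-$3$ analysis applied one tensor factor up, while $A_1^3$ and $A_1^4$ admit no non-trivial lifts because the pole contributions arising there are of even order and can be absorbed into $H^0(X,\O_X)$. Consistency of the descent is guaranteed by the uniqueness portion of Theorem \ref{Theorem - Hodge Filtration on HEC or EC}, and the constraint $1 \in b^*F^0\UU_4$ imposed in Step $(6)$ of the algorithm produces the $(x-x(b))$ factor in the lift of $1$. The main obstacle is computational rather than conceptual: verifying that the precise numerical coefficients $\tfrac{1}{3}\nu$ and $-(\mu + \tfrac{1}{3}\kappa)$ emerge correctly requires careful expansion of $x$, $y$, $F$ and every relevant $h^{t,m}_4$ in the local parameter $\pi$, and then tracking the resulting coefficients consistently through each of the four descent steps.
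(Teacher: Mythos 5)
Your proposal is correct and follows essentially the same route as the paper: the paper's proof likewise applies Algorithm \ref{Algorithm - Compute HF on HEC or EC} with the gauge transformation of Proposition \ref{Proposition - GT on UC of EC Level 4} and the conditions of Lemma \ref{Lemma - Conditions for EC}, determines the sections $a^{4,k}_m$ by matching pole orders at $\PP$ (recording them in a table), and then rewrites the result in commutator brackets. Your account of where $\nu$, $\kappa$ and the $(x-x(b))$ factor come from matches the mechanism the paper relies on, so the only remaining work is the coefficient bookkeeping you already identify.
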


\begin{proof}
We again proceed by application of Proposition \ref{Proposition - GT on UC of EC Level 4} to compute $G_4$, Algorithm \ref{Algorithm - Compute HF on HEC or EC} and Lemma \ref{Lemma - Conditions for EC}. We determine sections $a^{4,k}_m \in H^0(X, \O_X)$ defining the generators as in the algorithm. These are displayed in the following table:

\begin{table}[h]
\centering
\begin{tabular}{ |p{1cm}|p{2.5cm}|p{3cm}|  }
\hline
$m=$&$k$&$a^{4,k}_m=$ \\
\hline \hline
$3$ & all $k$ & $0$ \\
\hline
 & $\neq 8,12,14,15$ & $0$ \\\cline{2-3}
 & $=8$ &  $\lambda$\\\cline{2-3}
$2$ & $=12$ & $-\lambda$ \\\cline{2-3}
 & $=14$ & $-\lambda$ \\\cline{2-3}
 & $=15$ & $\lambda$ \\
\hline
 & $\neq 8,12,14,15$ & $0$ \\\cline{2-3}
& $=8$ &  $\mu + \frac{1}{3}\kappa$\\\cline{2-3}
$1$ & $=12$ & $-3\mu - \kappa$ \\\cline{2-3}
 & $=14$ & $3\mu +  \kappa$ \\\cline{2-3}
 & $=15$ & $-\mu - \frac{1}{3}\kappa$ \\
\hline

& $\neq 8,12,14,15$ & $0$ \\\cline{2-3}
& $=8$ &  $-\frac{1}{3}\nu(x-x(b))$\\\cline{2-3}
$0$ & $=12$ & $ \nu(x-x(b))$ \\\cline{2-3}
& $=14$ & $- \nu(x-x(b))$ \\\cline{2-3}
& $=15$ & $\frac{1}{3} \nu(x-x(b))$ \\
\hline
\end{tabular}
\end{table}

It is then a simple case of expressing them in terms of the commutator brackets. 
  \end{proof}

\begin{remark}
If we again take $\alpha_0=\frac{dx}{y}, \alpha_1=\frac{xdx}{y}$ and $F= \frac{y}{x}$ then we will find that $\mu=\kappa=0$, $\lambda=2$ and $\nu=8$. \end{remark}

\subsection{An application of Theorem 4 to odd hyperelliptic curves}

In this section we focus on the case where $C$ is a complete odd hyperelliptic curve. We focus on providing a complete description of the generators of $F^0\U_2$. First we provide a more suitable restatement of the conditions $I_m$ appearing at level $2$.

\begin{lemma}\label{Lemma - Conditions for HEC}
The conditions $I_m$ of Lemma \ref{Lemma - Conditions on Lifts} when $n=1$ may be restated as follows:
\begin{itemize}
\item[$I_1$:] For $k \in \lbrace 1,..,(2g)^{n+1}\rbrace-\F_2$ and $f \in \F_1$ \[b_{1,f}^{2,k}=a_{1,f}^{2,k} + h^{k_1+1,1}_{2}\delta_{1+k_0,f}\]
\item[$I_0$:] For $k \in \lbrace 1,..,(2g)^{n+1}\rbrace-\F_2$ \begin{align*}
b_{0,1}^{2,k} = &a_{0,1}^{2,k} + h^{k,0}_2 - \sum_{f=g+1}^{2g} b^{2,k}_{1,f}h^{f,0}_1
\end{align*}
\end{itemize} where $k = 1 + k_0 + k_1(2g)$ for some $k_0,k_1 \in \lbrace 0,..,2g-1 \rbrace$. 
\end{lemma}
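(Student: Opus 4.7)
The plan is to derive this lemma as a direct specialisation of the general conditions $(I_n), (I_m)$ from Lemma \ref{Lemma - Conditions on Lifts} with $n=1$, substituting the specific values of the auxiliary functions $\psi$ and $\tau$ and exploiting the known description of $F^0\UU_1$ from Example \ref{Example - Level 1}. There is no new content to prove: it is purely a matter of bookkeeping. First, I would write $k = 1 + k_0 + k_1(2g)$ with $k_0, k_1 \in \lbrace 0, \dots, 2g-1\rbrace$ and verify the following elementary identities by inspection of Definitions \ref{Definition - Auxiliary Functions} and \ref{Definition - Third Auxiliary Function}: $\psi(k,0,0,2g) = k$ (since the second branch of $\psi$ always applies when $i=j$), $\psi(k,0,1,2g) = k_1+1$ (handling the case $k_0 = 2g-1$ via the second branch of $\psi$), $\tau(k,1,0,2g) = 1$ (always), and $\tau(k,f,1,2g) = \delta_{1+k_0,f}$. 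The latter follows because the equation $k = f + (\psi(k,0,1,2g)-1)(2g)$ reduces to $f = 1 + k_0$.

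For condition $(I_1)$, substituting these into the general $(I_n)$ with $n=1$ gives
\[
b_{1,f}^{2,k} = a_{1,f}^{2,k} + h^{k_1+1,1}_{2}\,\delta_{1+k_0,f} - \sum_{f' \in \F_2} h^{\psi(f',0,1,2g),1}_{2}\,\tau(f',f,1,2g)\,\delta_{f',k}.
\]
The restriction $k \notin \F_2$ forces $\delta_{f',k}=0$ for every $f' \in \F_2$, annihilating the final sum and leaving precisely the claimed identity.

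For condition $(I_0)$, specialising the general $(I_m)$ with $m=0$, $n=1$ and $f = 1 \in \F_0$ produces four terms. The first two collapse to $h^{k,0}_2$ (since $\psi(k,0,0,2g)=k$ and $\tau(k,1,0,2g)=1$) minus a sum over $f' \in \F_2$ which again vanishes because $k \notin \F_2$. The third term, coming from $\sum_{l=1}^{1}$, equals $a^{1,\,1+k_0}_{0,1}\,h^{k_1+1,1}_2$. This is where I would invoke Example \ref{Example - Level 1}: the generator of $F^0\UU_1$ lifting $1 \in F^0\UU_0$ is simply $1$, so $a^{1,k'}_{0,1} = 0$ for all $k'$, and the third term drops out. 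In the final term one has $\F_1 = \lbrace g+1, \dots, 2g\rbrace$, $\psi(f',0,0,2g) = f'$, and $\tau(f',1,0,2g)=1$, so it becomes $-\sum_{f=g+1}^{2g} b^{2,k}_{1,f}\,h^{f,0}_1$, reproducing the stated formula.

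The only potential obstacle is sign and index bookkeeping in the case $k_0 = 2g-1$, where $k$ is divisible by $2g$ and the definition of $\psi$ switches branches; one must check that both branches yield the same value $k_1+1$. Beyond this, every simplification is mechanical, and the lemma follows from Lemma \ref{Lemma - Conditions on Lifts} together with the base-case computation in Example \ref{Example - Level 1}.
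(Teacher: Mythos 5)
Your proposal is correct and follows essentially the same route as the paper's own proof: compute $\psi(k,0,0,2g)=k$, $\psi(k,0,1,2g)=k_1+1$, $\tau(k,1,0,2g)=1$ and $\tau(k,f,1,2g)=\delta_{1+k_0,f}$, then substitute into the general conditions of Lemma \ref{Lemma - Conditions on Lifts}. The extra details you supply (the vanishing of the $\sum_{f'\in\F_2}$ terms because $k\notin\F_2$, and of the $l=1$ term because $a^{1,k'}_{0,1}=0$ by Example \ref{Example - Level 1}) are the bookkeeping the paper leaves implicit in its final sentence.
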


\begin{proof}
The proof of this lemma rests on a few simple computations. First we have $\psi(k,0,0,2g)= k$ by definition of $\psi$. Therefore, by definition of $\tau$ we find that $\tau(k,1,0,2g)=1$ since $k = 1 +(k-1)(2g)^0$. Second, observe that $k= 1+ k_0 + k_1 (2g)$ for some unique $k_0,k_1 \in \lbrace 0,..,2g-1 \rbrace$. It is easy then to check that $\psi(k,0,1,2g)=k_1+1$. Therefore, we find that $\tau(k,f,1,2g)=1$ if and only if $k=f+k_1(2g)$ and is $0$ otherwise. Putting all of these calculations together we deduce the formulation of the conditions $I_0$ and $I_1$ as in the statement of the lemma. 
  \end{proof}

Using this formulation of the conditions $I_1$ and $I_0$ we can prove the following proposition. 

\begin{proposition} \label{Proposition - HF on HEC Level 2}
The generators of $F^0\U_2$ as an $\O_X$-module are 
\begin{align*}
1 + \sum_{\substack{0 \leq i < g \\ g \leq j < 2g}}&a_{ij}[A_i,A_j]\\
A_{k} + \sum_{\substack{0 \leq i < g \\ g \leq j < 2g}}&c_{ijk}[A_i,A_j]\\
&A_rA_s
\end{align*}
where $k,r,s \in \lbrace g,..,2g-1 \rbrace$ and $a_{ij}, c_{ijk} \in H^0(X,\O_X)$ are sections such that

\begin{enumerate}
\item[(1)] $c_{ijk}$ are constant for all $i,j,k$
\item[(2)] $a_{ij}$ are such that \[ a_{ij} + h^{1+j+i(2g),0}_2 - \sum_{f=g+1}^{2g} c_{ijk}h^{f,0}_1\] are regular at $\PP$ and evaluate to $0$ at $b$. 
\end{enumerate}
\end{proposition}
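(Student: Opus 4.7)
The plan is to apply Algorithm \ref{Algorithm - Compute HF on HEC or EC} at level $n=1$, starting from $F^0\UU_1$, which by Example \ref{Example - Level 1} has generators $1, A_g, A_{g+1}, \ldots, A_{2g-1}$. The generators of $F^0\UU_2$ will consist of lifts of these $g+1$ generators, together with the $g^2$ generators $A_rA_s$ of $F^0V_{dr}^{\otimes 2} \otimes \O_C$. The main work is to show that the lifts can be put into the canonical commutator form stated in the proposition, and to identify the resulting conditions on the coefficients.

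First I would write a general lift of $A_k$ (for $k \in \{g,\ldots,2g-1\}$) over $X$ as
\[ A_k + \sum_{l \notin \F_2} a^{2,l}_{1,k+1}\, w^l_2, \]
and analogously over $Y$ with sections $b^{2,l}_{1,k+1}$, where the $\F_2$-components vanish by construction. Applying condition $I_1$ of Lemma \ref{Lemma - Conditions for HEC} yields
\[ b^{2,l}_{1,k+1} = a^{2,l}_{1,k+1} + h^{l_1+1,1}_2\, \delta_{1+l_0,\, k+1} \]
on $X \cap Y$, where $l = 1 + l_0 + l_1(2g)$. Exploiting the freedom in Algorithm \ref{Algorithm - Computing GT on UC of HEC} to take $h^{r,0}_1 = 0$ for $r \leq g$ (which is possible since $\alpha_0, \ldots, \alpha_{g-1}$ are regular on $C$), the correction term $h^{l_1+1,1}_2 = h^{l_1+1,0}_1$ vanishes whenever $l \notin \F_2$ forces $l_1 < g$. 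Hence $a^{2,l}_{1,k+1}$ and $b^{2,l}_{1,k+1}$ agree on $X \cap Y$, glue to a global section of $\O_C$, and are therefore constants.

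Next I would analyze the lift of $1$, writing it as $1 + \sum_{l \notin \F_2} a^{2,l}_{0,1}\, w^l_2$, and applying condition $I_0$:
\[ b^{2,l}_{0,1} = a^{2,l}_{0,1} + h^{l,0}_2 - \sum_{f=g+1}^{2g} b^{2,l}_{1,f}\, h^{f,0}_1. \]
Since $b^{2,l}_{0,1}$ must be regular at $\PP$, each $a^{2,l}_{0,1}$ must be chosen so that $a^{2,l}_{0,1} + h^{l,0}_2 - \sum_{f=g+1}^{2g} b^{2,l}_{1,f}\, h^{f,0}_1$ is regular at $\PP$, which combined with the basepoint condition $a^{2,l}_{0,1}(b)=0$ from Step (6) of the algorithm produces condition (2) after regrouping. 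Both lifts are then rewritten in terms of the commutators $[A_i,A_j]$ with $i<g,\; j\geq g$ by pairing the coefficients of $A_iA_j$ and $A_jA_i$ and absorbing (large, large) summands into the adjoined $A_rA_s$ generators.

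The main obstacle will be the combinatorial bookkeeping needed to reorganise expressions indexed by words $w^l_2$ into the commutator basis, and in particular to verify that the coefficients of products $A_iA_j$ with both $i,j < g$ either vanish or are reabsorbed so that the final representation matches the stated form. This requires careful tracking of the choices $h^{r,0}_1 = 0$ for $r \leq g$ made in Algorithm \ref{Algorithm - Computing GT on UC of HEC}, together with the uniqueness (up to automorphism of filtered logarithmic connections) asserted by Theorem \ref{Theorem - Hodge Filtration on HEC or EC}, to justify that the canonical commutator form is indeed attained.
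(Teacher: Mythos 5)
Your overall strategy matches the paper's: apply Algorithm \ref{Algorithm - Compute HF on HEC or EC} at level $n=1$, invoke the conditions $I_1$ and $I_0$ of Lemma \ref{Lemma - Conditions for HEC}, deduce from $I_1$ that the coefficients in the lifts of the $A_k$ glue to global sections of $\O_C$ and are therefore constant, and read off condition (2) from $I_0$ together with the basepoint normalisation. That part is sound; your choice $h^{r,0}_1=0$ for $r\le g$ is a legitimate specialisation of the paper's observation that these functions are regular at $\PP$ (hence constant, being polynomials in $F$).

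The genuine gap is the step you defer to ``combinatorial bookkeeping'' and to the uniqueness clause of Theorem \ref{Theorem - Hodge Filtration on HEC or EC}. Neither delivers the commutator form. To write the lift of $A_k$ as $A_k+\sum c_{ijk}[A_i,A_j]$ you must prove the antisymmetry $c_{ijk}=-c_{jik}$ (and likewise $a_{ij}=-a_{ji}$), together with the vanishing of the coefficients of $A_iA_j$ when both $i,j<g$; uniqueness of the filtration only guarantees that whatever generators the algorithm produces are the correct ones, not that their coefficients are antisymmetric in $(i,j)$. The paper obtains antisymmetry by pairing the two instances of $I_0$ with indices $1+j+i(2g)$ and $1+i+j(2g)$ and exploiting a specific identity satisfied by the level-$2$ gauge functions: summing the defining conditions for $h^{1+j+i(2g),0}_2$ and $h^{1+i+j(2g),0}_2$ produces the exact form $d\bigl(h^{1+j+i(2g),0}_2+h^{1+i+j(2g),0}_2-h^{i+1,0}_1h^{j+1,0}_1\bigr)$, which has at worst logarithmic poles at $\PP$, so the function in brackets is regular at $\PP$; substituting this into the sum of the two $I_0$ conditions and comparing pole orders forces $c_{ijk}+c_{jik}=0$ and shows $a_{ij}+a_{ji}$ is constant, hence zero by the condition at $b$. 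Without this input from the construction of the logarithmic extension in Algorithm \ref{Algorithm - Computing GT on UC of HEC}, the commutator form of the generators is not justified.
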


\begin{proof}
We proceed by applying Algorithm \ref{Algorithm - Compute HF on HEC or EC} with the reformulation of the conditions as in Lemma \ref{Lemma - Conditions for HEC}. First we consider condition $I_1$. We need to examine the term $h^{k_1+1,1}_2\delta_{1+k_0,f}=h^{k_1+1,0}_1\delta_{1+k_0,f}$. Since $k \not \in \F_2$ and $f \in \F_1$ by Lemma \ref{Lemma - Hodge Basis for Vdr} $1+k_0=f$ can only occur when $k_1<g$. Hence $k_1+1 \in \lbrace 1,..,g \rbrace$. Therefore, when they appear, each $h^{k_1+1,1}_2$ must be regular at $\PP$ and hence each $a^{2,k}_{1,f}$ is regular at $\PP$ and hence constant on $C$. 

We now examine condition $I_0$. Observe that $c_{ijk}=a^{2,1+j+i(2g)}_{1,k+1}$ is the coefficient of $A_iA_j$ in the lift of the generator $A_k$ of $F^0\U_1$ and that $a_{ij}=a^{2,1+j+i(2g)}_{0,1}$ is the coefficient of $A_iA_j$ in the lift of $1$. To complete the proof of the proposition we, therefore, need to show:
\begin{align*}
c_{ijk}&=-c_{jik} \text{ for all } k,i,j \text{ and } c_{ijk}=0 \text{ if } i,j<g \\
a_{ij}&=-a_{ji} \text{ for all } i,j \text{ and } a_{ij}=0 \text{ if } i,j<g 
\end{align*} Condition $I_0$ for $k=1+j+i(2g),1+i+j(2g)$ are
\begin{align}\label{Proof - Proposition on HEC HF Equation 1}
b_{0,1}^{2,1+j+i(2g)} = &a_{ij} + h^{1+j+i(2g),0}_2 - \sum_{f=g+1}^{2g} b^{2,1+j+i(2g)}_{1,f}h^{f,0}_1 \\ 
b_{0,1}^{2,1+i+j(2g)} = &a_{ji} + h^{1+i+j(2g),0}_2 - \sum_{f=g+1}^{2g} b^{2,1+i+j(2g)}_{1,f}h^{f,0}_1.\label{Proof - Proposition on HEC HF Equation 2} 
\end{align}
By Algorithm \ref{Algorithm - Computing GT on UC of HEC} we know that $h^{1+j+i(2g),0}_2$ and $h^{1+i+j(2g),0}_2$ are such that 
\begin{align}\label{Proof - Proposition on HEC HF Equation 3}
&dh^{2gi+j+1,0}_2-\alpha_ih_1^{j+1,0}-c_1^{j+1,0}h^{i+1,0}_1\\
& dh^{2gj+i+1,0}_2-\alpha_jh_1^{i+1,0}-c_1^{i+1,0}h^{j+1,0}_1 \label{Proof - Proposition on HEC HF Equation 4}
\end{align} have at worst logarithmic poles at $\PP$. Here $c^{j+1,0}_1 = dh^{j+1,0}_1-\alpha_j$. 

Now if $i,j<g$ then $\alpha_i,\alpha_j \in \Omega^1_{C/K}$ are regular on $C$. Therefore, $h^{i+1,0}_1,h^{j+1,0}_1$ must be regular at $\PP$ and hence $h^{1+j+i(2g),0}_2$ and $h^{1+i+j(2g),0}_2$ must also be regular at $\PP$. Applying Algorithm \ref{Algorithm - Compute HF on HEC or EC} we conclude that when $i,j<g$ then $c_{ijk}=0$ for all $k$ and that $a_{ij}$ are constant. Since they must evaluate to $0$ at $b$ we conclude that they too are all $0$. 

Suppose now that $i< g$ and $j \geq g$. Adding conditions in (\ref{Proof - Proposition on HEC HF Equation 3}) and (\ref{Proof - Proposition on HEC HF Equation 4}) together we obtain $d(h^{2gi+j+1,0}_2+h^{2gj+i+1,0}_2-h^{i+1,0}_1h^{j+1,0}_1)$. Hence $h^{2gi+j+1,0}_2+h^{2gj+i+1,0}_2-h^{i+1,0}_1h^{j+1,0}_1$ must be regular at $\PP$. If we now add (\ref{Proof - Proposition on HEC HF Equation 1}) and (\ref{Proof - Proposition on HEC HF Equation 2}) together and use condition $I_1$ we obtain 
\begin{align*}
b_{0,1}^{2,1+j+i(2g)}+b_{0,1}^{2,1+i+j(2g)} = & a_{0,1}^{2,1+j+i(2g)}+a_{0,1}^{2,1+i+j(2g)} - \sum_{f=g+1}^{2g} (a^{2,1+j+i(2g)}_{1,f}+a^{2,1+i+j(2g)}_{1,f})h^{f,0}_1 \\
& +h^{1+j+i(2g),0}_2+h^{1+i+j(2g),0}_2-h^{i+1,0}_1h^{j+1,0}_1.
\end{align*}

As before, we conclude that we must have $c_{ijk}+c_{jik}=0$ and $a_{ij}+a_{ji}$ constant for all $i <g, j \geq g$. Since $a_{ij}+a_{ji}$ must evaluate to $0$ at $b$ we conclude that this too is exactly $0$. Thus, we have shown that the lifts satisfy the conditions claimed in the statement of the proposition.  
  \end{proof}

We conclude this section with an explicit example of the above lemma for genus $2$ odd hyperelliptic curves. 

\begin{example} \label{Example - HF on Genus 2 HEC}
Let $C$ be a hyperelliptic curve of genus $2$ over $K$ with odd affine model $y^2=f(x)$ for some $f(x) \in K[x]$ where $\deg f =5$. Let us take $\alpha_i : = \dfrac{x^idx}{y}$, the standard $K$-basis for $H^1_{dr}(X/K)$. Let $F:=\frac{y}{x^2} \in K(C)$ and note that it has a logarithmic pole at $\PP$. Applying Algorithm \ref{Algorithm - Computing GT on UC of HEC} and Algorithm \ref{Algorithm - Compute HF on HEC or EC} we conclude that $F^0\U_2$ has the following generators as an $\O_X$-module lifting the generators of $F^0\U_1$:

\[
1-\frac{2}{3}(x-x(b))[A_1,A_3],\; A_2,\; A_3,\; A_2^2,\; A_2A_3,\; A_3A_2,\; A_3^2
\]
\end{example}

\section{Computing the de Rham period maps} \label{Section 5}

This section is concerned with explicit computation of the $p$-adic de Rham period maps introduced by Kim in \cite{kim09}. As outlined in loc.cit. the coordinates of these maps can be described in terms of explicit $p$-adic iterated Coleman integrals. We will make use of the algorithms and results from Sections \ref{Section 3} and \ref{Section 4} to compute coordinates of some of these maps at new levels as well as providing a conjectural closed form for the map on elliptic curves. First, we recall how the period map is defined. 
\subsection{Defining the de Rham period map $j^{dr}_n$}

Throughout this section we shall assume the following: $K$ is a number field, $v$ is a non-Archimedean valuation on $K$ and $K_v$ is the completion of $K$ with respect to $v$. Let $C$ be an elliptic or odd hyperelliptic curve and define $X:=C- \lbrace \PP \rbrace$. We let $X_v:=X \otimes K_v$ denote the basechange of $X$. Take a rational basepoint $b \in X(K)$ and suppose that $x \in X_v(K_v)$. We wish to find explicit descriptions for the maps 
\index{$j^{dr},j^{dr}_n$}
\[
j^{dr}_n: X_v(K_v) \rightarrow F^0U^{dr}_n \setminus U^{dr}_n.
\]

Recall from Section \ref{Section 2} that in order to compute $j^{dr}_n(x)$ we need to identify the following trivialisations over $K_v$: a Frobenius trivialisation $p^{cr}_n(x) \in P^{dr}_n(x)$; a Hodge trivialisation $p^H_n(x) \in F^0P^{dr}_n(x)$; and a trivialisation $u_n(x) \in U^{dr}_n$ such that $p^{cr}_n(x)=p^H_n(x)u_n(x)$. We know that \index{$p_n^{cr}(x)$}
\begin{equation}\label{Equation - pcr(x)}
p_n^{cr}(x) = 1 + \sum_{|w| \leq n} \int_b^x \alpha_w w
\end{equation}
where the sum is taken over all words of length at most $n$. We have shown in Section \ref{Section 4} that $\U_n$ possesses a Hodge filtration $F^{\bullet}\U_n$. This induces a filtration on the dual bundles $\U_n^{\vee}$: define $F^{i}(\U_n^{\vee}):=(F^{1-i}\U_n)^{\perp}= \lbrace l \in \U_n^{\vee} | F^{1-i}(\U_n) \subseteq \ker l \rbrace$. Recall that $F^{-1+i}\mathcal{P}^{dr}_n$ is the defining ideal for $F^iP^{dr}_n$. Therefore, $F^iP^{dr}_n$ has defining ideal $(F^{i}\U_n)^{\perp}$, or $F^iP^{dr}_n = \Spec(\U_n^{\vee}/(F^{i}\U_n)^{\perp})$. Therefore, arguing as in Proposition \ref{Proposition - Coordinate ring of Pdr} the group-like elements of $x^*F^i\U_n$ correspond to elements of $F^iP^{dr}_n(x)$. We utilise this property in what follows. 

The first explicit description of the de Rham period maps beyond level $1$ was given in \cite{kkb11} for $n=2$ when $C$ is an elliptic curve.

\begin{proposition}[\cite{kkb11}, Proof of Corollary 0.2']
The level $2$ unipotent Albanese map $j^{dr}_2: X_v(K_v) \rightarrow F^0U^{dr}_2 \setminus U^{dr}_2$ is defined by
\begin{equation} \label{UAM - Elliptic Curve - Level 2}
x \mapsto \int_b^x \alpha_0 A_0 + \int_b^x \alpha_0\alpha_1 [A_0,A_1].
\end{equation}
\end{proposition}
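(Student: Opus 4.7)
The plan is to assemble the three ingredients of the identity $p^{cr}_2(x) = p^H_2(x)\, u_2(x)$ explicitly and then read off $[u_2(x)] \in F^0U^{dr}_2 \setminus U^{dr}_2$. By Lemma \ref{Lemma - phi invariant path} the Frobenius-invariant path is
\[
p^{cr}_2(x) = 1 + I_0 A_0 + I_1 A_1 + I_{00} A_0^2 + I_{01} A_0A_1 + I_{10} A_1 A_0 + I_{11} A_1^2,
\]
writing $I_{i_1\cdots i_r} := \int_b^x \alpha_{i_1}\cdots\alpha_{i_r}$. To produce $p^H_2(x)$ I will appeal to Proposition \ref{Proposition - HF on UC of EC Level 2}, which shows that $F^0\UU_2$ is generated by $1, A_1, A_1^2$; the group-like elements of $x^*F^0\U_2$, which constitute $F^0P^{dr}_2(x)$ by the discussion following Proposition \ref{Proposition - Coordinate ring of Pdr}, are then, by a short coproduct calculation, precisely the one-parameter family $\{\exp(\lambda A_1) : \lambda \in K_v\} = \{1 + \lambda A_1 + \tfrac{1}{2}\lambda^2 A_1^2\}$. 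I therefore set $p^H_2(x) = \exp(\lambda A_1)$, leaving $\lambda$ free to be optimised at the end.

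Next I would compute
\[
u_2(x) = \exp(-\lambda A_1)\cdot p^{cr}_2(x) \pmod{\text{words of length } > 2}
\]
and pass to $\mathrm{Lie}(U^{dr}_2)$ via the logarithm of Remark \ref{Remark - Exp and Log and Grouplike}. Since $\log u_2(x)$ must be primitive, the shuffle identities $2I_{00} = I_0^2$ and $2I_{11} = I_1^2$ for iterated Coleman integrals force the $A_0^2$ and $A_1^2$ coefficients to vanish, while $I_0 I_1 = I_{01} + I_{10}$ forces the surviving degree-two contribution to lie in the one-dimensional space $K_v\cdot [A_0,A_1]$. A direct expansion then yields
\[
\log u_2(x) = I_0 A_0 + (I_1 - \lambda) A_1 + \tfrac{1}{2}\bigl(I_{01} - I_{10} + \lambda I_0\bigr)[A_0, A_1].
\]

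Finally, the freedom to left-multiply by $F^0U^{dr}_2 = \exp(K_v\cdot A_1)$ lets me choose $\lambda = I_1$, which kills the $A_1$-coefficient, and one further application of $I_0I_1 = I_{01} + I_{10}$ reduces the coefficient of $[A_0, A_1]$ to $I_{01}$, producing the asserted formula. The only step beyond mechanical manipulation of iterated integrals is the identification of $F^0P^{dr}_2(x)$ with the family $\{\exp(\lambda A_1)\}$; this rests squarely on Proposition \ref{Proposition - HF on UC of EC Level 2}, and the analogous input is what makes the higher-$n$ computations in Section \ref{Section 5} substantially more delicate.
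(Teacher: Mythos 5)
Your proposal is correct and follows essentially the same route as the paper: both rest on Proposition \ref{Proposition - HF on UC of EC Level 2} identifying $F^0\U_2$ as generated by $1, A_1, A_1^2$ (hence $F^0P^{dr}_2(x)$ with the group-likes $\exp(\lambda A_1)$), and both arrive at the decomposition $p^{cr}_2(x)=\exp\bigl(\int_b^x\alpha_1\,A_1\bigr)\exp\bigl(\int_b^x\alpha_0\,A_0+\int_b^x\alpha_0\alpha_1\,[A_0,A_1]\bigr)$ using the shuffle relations. Your version merely makes explicit, via the free parameter $\lambda$ and the primitivity check on $\log u_2(x)$, the verification the paper dismisses as ``a simple computation.''
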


\begin{proof}
The point is that we know $1,A_1,A_1^2$ generate $F^0\U_2$ using Proposition \ref{Proposition - HF on UC of EC Level 2}. It is a simple computation to check that 
\[
p^{cr}_2(x) = \exp\left(\int_b^x \alpha_1 A_1\right) \exp\left(  \int_b^x \alpha_0 A_0 + \int_b^x \alpha_0\alpha_1 [A_0,A_1]\right).
\]

Note that $\int_b^x \alpha_0 A_0 + \int_b^x \alpha_0\alpha_1 [A_0,A_1]$ is primitive since linear combinations and commutators of primitive elements are primitive. We conclude that we have found the decomposition $p^{cr}_2(x)=p^H_2(x)u_2(x)$ as required. 
  \end{proof}

\begin{remark}
Observe that the map of the above proposition is actually the logarithm of $j^{dr}_2$. As per Remark \ref{Remark - Exp and Log and Grouplike}, however, there is a bijection between $\text{Lie}U^{dr}_2$ and $U^{dr}_2$ coming from the $\exp$ and $\log$ maps. For the sake of brevity it will be more convenient here and in what follows to present the image of the map in $\text{Lie}U^{dr}_n$. However, we will suppress this in the notation, making implicit use of the previously stated bijective correspondence. 
\end{remark}

\begin{remark}
Note that in order to compute the image of the map $j^{dr}_2$ for specific values of $x \in X_v(K_v)$ we need to compute the iterated integrals $\int_b^x \alpha_0$ and $\int_b^x \alpha_0 \alpha_1$. These can be computed using the algorithms of Balakrishnan in \cite{balakrishnan10}; this involves computation of the matrix of action of Frobenius on $H^1_{dr}(X/K)$ by making use of Kedlaya's algorithm and solving a system of linear equations to compute the integrals between Teichm\"{u}ller points.
\end{remark}

In what follows we will make use of two properties of iterated integrals (\cite[Proposition 5.2.1]{balakrishnan11}):

\begin{equation} \label{Iterated Integrals Property 1}
\int \omega_1..\omega_r \int \omega_{r+1}..\omega_s = \sum_{\sigma \in S(r,s)}\int \omega_{\sigma(1)}..\omega_{\sigma(s)}
\end{equation}

\begin{equation} \label{Iterated Integrals Property 2}
\int \underbrace{\omega\omega...\omega}_\text{$n$ times} = \frac{1}{n!}\left(\int \alpha \right)^n
\end{equation}

where in (\ref{Iterated Integrals Property 1}) we sum over all permutations of shuffle type $(r,s)$. Note that (\ref{Iterated Integrals Property 2}) is a simple consequence of (\ref{Iterated Integrals Property 1}). 

\subsection{The levels $3$ and $4$ period maps on affine elliptic curves}

In this section we will compute the levels $3$ and $4$ maps on affine elliptic curves using the results of the previous two sections. Recall from Proposition \ref{Proposition - HF on UC of EC Level 3} the Hodge filtration of $F^0\U_3$ is generated by 
\[
1, \; A_1 + \lambda [[A_0,A_1],A_1], \;  A_1^2, \; A_1^3.
\]

We want to find a primitive $p^H_3(x)$ in the $K_v$-algebra generated by these, and a primitive $u_3(x) \in x^*\U_3$ such that $p^{cr}_3(x)=\exp(p^H_3(x))\exp(u_3(x))$. We may then define $j^{dr}_3(x)=u_3(x)$. 

\begin{proposition} \label{Prop - Level 3 Map EC}
The level $3$ unipotent Albanese map $j^{dr}_3: X_v(K_v) \rightarrow F^0U^{dr}_3 \setminus U^{dr}_3$ is defined by 

\[
\begin{split}
x \mapsto u_3(x):=& \int_b^x \alpha_0A_0  + \int_b^x \alpha_0\alpha_1 [A_0,A_1]+\frac{1}{2}\int_b^x\alpha_0\alpha_1\alpha_0[A_0,[A_1,A_0]]  \\
	& + \int_b^x( \alpha_0\alpha_1\alpha_1- \lambda\alpha_1)				[[A_0,A_1],A_1]
\end{split}
\]

\end{proposition}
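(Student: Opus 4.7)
The plan is to exhibit a factorization $p^{cr}_3(x) = p^H_3(x) \cdot u_3(x)$ in $U^{dr}_3(K_v)$ with $p^H_3(x) \in F^0 U^{dr}_3$ and with $u_3(x)$ of the displayed form; by Proposition \ref{Proposition - Classifying Space Good Redn} this will determine $j^{dr}_3(x) = [u_3(x)]$. I will use the Frobenius trivialization of Lemma \ref{Lemma - phi invariant path},
\[
p^{cr}_3(x) = 1 + \sum_{|w|\leq 3}\Bigl(\int_b^x \alpha_w\Bigr) w,
\]
together with the generators $1,\; A_1 + \lambda[[A_0,A_1],A_1],\; A_1^2,\; A_1^3$ of $F^0 \U_3$ computed in Proposition \ref{Proposition - HF on UC of EC Level 3}.

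First I would identify $\text{Lie}(F^0 U^{dr}_3)$ inside $R_3 \simeq b^*\U_3$ using Remark \ref{Remark - Exp and Log and Grouplike}: the Lie algebra corresponds to primitive elements of the Hopf algebra. Of the four generators above, only $A_1 + \lambda[[A_0,A_1],A_1]$ is primitive, since $\Delta(A_1^k)$ contains the nonzero term $\binom{k}{1} A_1 \otimes A_1^{k-1}$ for $k=2,3$. Hence $\text{Lie}(F^0 U^{dr}_3)$ is one-dimensional and any Hodge trivialization has the form
\[
p^H_3(x) = \exp\bigl(c(x)\,(A_1 + \lambda[[A_0,A_1],A_1])\bigr)
\]
for some scalar $c(x) \in K_v$; comparing the coefficient of $A_1$ in $\log p^{cr}_3(x)$ (all Baker--Campbell--Hausdorff corrections being of length $\geq 2$) forces $c(x) = \int_b^x \alpha_1$.

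With this $p^H_3(x)$ and with $u_3(x)$ equal to the expression in the proposition, the remaining step will be to verify the identity $\exp(p^H_3(x))\exp(u_3(x)) = p^{cr}_3(x)$ modulo words of length $\geq 4$. Since both sides are group-like, this reduces by taking $\log$ to an identity between primitive elements. Baker--Campbell--Hausdorff gives
\[
\log\bigl(\exp(p^H_3)\exp(u_3)\bigr) = p^H_3 + u_3 + \tfrac{1}{2}[p^H_3,u_3] + \tfrac{1}{12}\bigl([p^H_3,[p^H_3,u_3]] + [u_3,[u_3,p^H_3]]\bigr)
\]
modulo length $\geq 4$, and $\log p^{cr}_3(x)$ can be expanded via $\log(1+y) = y - y^2/2 + y^3/3$. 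I would then compare coefficients in the Lyndon basis $\{A_0, A_1, [A_0,A_1], [A_0,[A_0,A_1]], [[A_0,A_1],A_1]\}$. Each comparison reduces to an elementary shuffle identity, e.g.\ $I_0 I_1 = I_{01}+I_{10}$, $I_1 I_{01} = 2I_{011}+I_{101}$, $I_0 I_{11} = I_{011}+I_{101}+I_{110}$; for instance the coefficient of $[A_0,[A_0,A_1]]$ is fed only by $\tfrac{1}{12}[u_3,[u_3,p^H_3]]$ and produces exactly the $\tfrac{1}{2}I_{010}$ in $u_3$, while the $[[A_0,A_1],A_1]$-coefficient is where the $-\lambda \int_b^x \alpha_1$ correction enters, compensating the $\lambda[[A_0,A_1],A_1]$ contribution carried by $p^H_3$.

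The main obstacle will be the combinatorial bookkeeping of the Baker--Campbell--Hausdorff and shuffle expansions; structurally, however, every term in $u_3(x)$ except the $-\lambda\int_b^x \alpha_1$ summand is already present in the naive calculation one would perform if $F^0 \text{Lie}(U^{dr}_3)$ were simply $K_v \cdot A_1$, and it is precisely the non-trivial Hodge-filtration generator of Proposition \ref{Proposition - HF on UC of EC Level 3} that forces the additional correction.
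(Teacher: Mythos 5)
Your proposal is correct and follows essentially the same route as the paper: both exhibit the decomposition $p^{cr}_3(x)=\exp\left(\int_b^x\alpha_1\,(A_1+\lambda[[A_0,A_1],A_1])\right)\exp(u_3(x))$ using the Frobenius-invariant path of Lemma \ref{Lemma - phi invariant path}, the generators of $F^0\U_3$ from Proposition \ref{Proposition - HF on UC of EC Level 3}, and the shuffle relations, differing only in that you verify the identity additively via $\log$ and Baker--Campbell--Hausdorff while the paper expands the product of exponentials word by word. Your preliminary observation that the primitive part of $b^*F^0\UU_3$ is one-dimensional, so that the Hodge trivialisation is forced to equal $\exp\left(\int_b^x\alpha_1\cdot(A_1+\lambda[[A_0,A_1],A_1])\right)$, is a worthwhile addition that the paper leaves implicit.
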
 

\begin{proof} Using properties (\ref{Iterated Integrals Property 1}),(\ref{Iterated Integrals Property 2}) of iterated integrals and by comparing $p^{cr}_3(x)$ to 
\[
\exp\left(\int_b^x (\alpha_1 A_1+\lambda [[A_0,A_1],A_1])\right) \exp\left(  \int_b^x \alpha_0 A_0 + \int_b^x \alpha_0\alpha_1 [A_0,A_1]\right)
\]
we can rewrite $p^{cr}_3(x)$ as follows

\begin{align*}
p^{cr}_3(x)=& 1 + \int_b^x \alpha_0 A_0 + \int_b^x \alpha_1 A_1 + \frac{1}{2}\left(\int_b^x \alpha_0\right)^2A_0^2 + \int_b^x \alpha_0\alpha_1A_0A_1\\
& +\left( \int_b^x \alpha_1\int_b^x\alpha_0 - \int_b^x\alpha_0\alpha_1 \right)A_1A_0+ \left( \int_b^x \alpha_1 \int_b^x \alpha_0\alpha_1 - 2 \lambda \int_b^x \alpha_1 \right) A_1A_0A_1 \\
& + \left( \frac{1}{2}\left(\int_b^x \alpha_0\right)^2 \int_b^x \alpha_1 - \frac{1}{2}\int_b^x \alpha_0 \int_b^x \alpha_0\alpha_1 \right)A_1A_0^2 \\
&  + \left(\frac{1}{2}\left(\int_b^x \alpha_1 \right)^2 \int_b^x \alpha_0 - \int_b^x \alpha_1 \int_b^x \alpha_0\alpha_1+\lambda \int_b^x \alpha_1 \right) A_1^2A_0 + \frac{1}{3!}\left( \int_b^x\alpha_1\right)^3 A_1^3 \\
&+ \frac{1}{2}\int_b^x \alpha_0\alpha_1\alpha_0 [A_0,[A_1,A_0]] + \left( \int_b^x \alpha_0\alpha_1\alpha_1 - \lambda \int_b^x \alpha_1 \right) [[A_0,A_1],A_1] \\
 & 	=\exp \left( \int_b^x \alpha_1 (A_1+\lambda[[A_0,A_1],A_1])\right)  \exp (u_3(x))
\end{align*}
where $u_3(x)$ is as in the statement of the proposition. Since $A_0,A_1$ are primitive and commutators of primitives are primitive we are done.
  \end{proof}

To compute the level $4$ map we try to mimic the approach taken at level $4$ by comparing a modification of the decomposition at level $3$ to the Frobenius invariant element at level $4$. 
\begin{proposition} \label{Prop - Level 4 Map EC}
The level $4$ unipotent Albanese map $j^{dr}_4: X_v(K_v) \rightarrow F^0U^{dr}_4 \setminus U^{dr}_4$ is defined by 
\[
\begin{split} 
x \mapsto u_4(x):= &\int_b^x \alpha_0A_0  + \int_b^x \alpha_0\alpha_1 ([A_0,A_1]+ \frac{\lambda}{2} [[A_0,[A_0,A_1]],A_1]) +\frac{1}{2}\int_b^x\alpha_0\alpha_1\alpha_0[A_0,[A_1,A_0]]  \\
	& + \int_b^x (\alpha_0\alpha_1\alpha_1- \lambda\alpha_1)				[[A_0,A_1],A_1] +\frac{1}{6}\int_b^x \alpha_0 \alpha_0 \alpha_1 \alpha_0 [[A_0,[A_0,A_1]],A_0]\\
& + \frac{1}{6}\int_b^x \alpha_0 \alpha_1 \alpha_0 \alpha_0 [[[A_0,A_1],A_0],A_0] + \frac{1}{2} \int_b^x (\alpha_0\alpha_1 \alpha_0 \alpha_1 - \lambda  \alpha_1 \alpha_0) [[A_0,[A_1,A_0]],A_1] \\
&  +  \int_b^x (\alpha_0\alpha_1 \alpha_1 \alpha_1 - \lambda  \alpha_1 \alpha_1 - (\mu + \frac{ \kappa}{3})  \alpha_1 )[[[A_0,A_1],A_1],A_1]\\
& + \frac{1}{2} \int_b^x \alpha_0\alpha_1 \alpha_0 \alpha_1 [[[A_0,A_1],A_1],A_0]
\end{split}
\]
where $\lambda, \mu, \kappa$ are as in Proposition \ref{Proposition - HF on UC of EC Level 4}. 

\end{proposition}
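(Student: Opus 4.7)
The plan is to mirror the level $3$ argument of Proposition \ref{Prop - Level 3 Map EC}. I will start from the Frobenius-invariant path
\[
p^{cr}_4(x) = 1 + \sum_{|w|\leq 4} \left(\int_b^x \alpha_w\right) w
\]
of Lemma \ref{Lemma - phi invariant path} together with the generators of $F^0\UU_4$ supplied by Proposition \ref{Proposition - HF on UC of EC Level 4}, and seek a decomposition $p^{cr}_4(x) = \exp(H_4(x))\exp(u_4(x))$ in which $H_4(x)$ is a primitive element lying in the Lie subalgebra of $\text{Lie}(U^{dr}_4)$ cut out by $F^0\U_4$, and $u_4(x)$ is primitive. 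Once such a decomposition is found, $j^{dr}_4(x) = [u_4(x)]$ gives the required class in $F^0U^{dr}_4\setminus U^{dr}_4$.

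The natural ansatz for the Hodge part is
\[
H_4(x) = \int_b^x \alpha_1 \cdot \left(A_1 + \lambda[[A_0,A_1],A_1] - \left(\mu + \tfrac{\kappa}{3}\right)[A_1,[A_1,[A_1,A_0]]]\right),
\]
namely the $1$-form $\alpha_1$ integrated against the primitive degree-$4$ lift of the Hodge generator $A_1$ from Proposition \ref{Proposition - HF on UC of EC Level 4}. This element is primitive because it is a $K_v$-linear combination of $A_1$ and nested Lie brackets of $A_0, A_1$, and its projection to level $3$ recovers the Hodge piece used in the proof of Proposition \ref{Prop - Level 3 Map EC}. In particular, at lengths $\leq 3$ the decomposition inherited from level $3$ is already correct, so one only has to control the length-$4$ contributions.

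The bulk of the proof is then computational. I would expand $\exp(H_4(x))\exp(u_4(x))$ via the Baker--Campbell--Hausdorff formula truncated at length $4$, and match the coefficient of each word in $A_0, A_1$ of length $\leq 4$ against the corresponding Coleman integral in $p^{cr}_4(x)$. At length $4$ three contributions must be amalgamated: the new length-$4$ iterated integrals appearing in $p^{cr}_4(x)$; scalar products of shorter iterated integrals produced by $\exp(H_4)\exp(u_4)$, which by the shuffle relations (\ref{Iterated Integrals Property 1}) and (\ref{Iterated Integrals Property 2}) rewrite as sums of single length-$4$ iterated integrals; and the BCH cross-brackets $\tfrac{1}{2}[H_4, u_4]$, $\tfrac{1}{12}[H_4,[H_4, u_4]]$ and $\tfrac{1}{12}[u_4,[u_4, H_4]]$, which feed new length-$4$ commutators into $u_4(x)$. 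Primitivity of $u_4(x)$ then follows automatically, since every summand is an iterated Coleman integral multiplying a nested Lie bracket in the primitive elements $A_0, A_1$.

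The main obstacle is the bookkeeping of these length-$4$ corrections rather than any conceptual difficulty. In particular, the correction $\tfrac{\lambda}{2}\int_b^x \alpha_0\alpha_1\,[[A_0,[A_0,A_1]],A_1]$ attached to the length-$2$ bracket $[A_0,A_1]$ arises from the BCH commutator of $H_4$ with the length-$3$ part of $u_4$, while the subtracted primitives $-\lambda\alpha_1\alpha_0$ in the coefficient of $[[A_0,[A_1,A_0]],A_1]$ and $-\lambda\alpha_1\alpha_1 - (\mu + \tfrac{\kappa}{3})\alpha_1$ in the coefficient of $[[[A_0,A_1],A_1],A_1]$ must be shown to cancel exactly against the BCH expansion of $H_4$ and the integrated Hodge lift in $H_4$ itself. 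Once these matches are verified word by word, the decomposition $p^{cr}_4(x) = \exp(H_4(x))\exp(u_4(x))$ holds and the stated formula for $u_4(x)$ represents $j^{dr}_4(x)$.
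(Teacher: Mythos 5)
Your proposal is correct and follows essentially the same route as the paper: both take the Hodge trivialisation to be $\exp\bigl(\int_b^x \alpha_1\,(A_1+\lambda[[A_0,A_1],A_1]-(\mu+\tfrac{\kappa}{3})[A_1,[A_1,[A_1,A_0]]])\bigr)$ using the primitive generator of $F^0\UU_4$ from Proposition \ref{Proposition - HF on UC of EC Level 4}, and then extract $u_4(x)$ by matching word coefficients of $p^{cr}_4(x)$ against the product of exponentials via the shuffle relations, exactly as in the level-$3$ computation. Your organisation of the length-$4$ bookkeeping through BCH cross-terms is a cosmetic variant of the paper's direct comparison of $p^{cr}_4(x)$ with $\exp(H_4(x))\exp(u_3(x))$, not a different argument.
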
  

\begin{proof}
We proceed by a comparison of $p^{cr}_4(x)$ with 

\[
\exp\left( \int_b^x \alpha_1(A_1 +\lambda [[A_0,A_1],A_1] -(\mu + \frac{1}{3} \kappa)[A_1,[A_1,[A_1,A_0]]])\right) \exp(u_3(x)).
\]

A computation similar to that in the proof of Proposition \ref{Prop - Level 3 Map EC} shows that 
\[
p^{cr}_4(x) = \exp\left( \int_b^x \alpha_1 \left( A_1 + \lambda [[A_0,A_1],A_1] -(\mu + \frac{\kappa}{3})[A_1,[A_1,[A_1,A_0]]] \right) \right) \exp(u_4(x)) 
\]
where $u_4(x)$ is as in the statement of the proposition. Therefore since the coefficient of $\int_b^x \alpha_1$ is primitive and a generator of $F^0\U_4$ by Proposition \ref{Proposition - HF on UC of EC Level 4} we are done. 
  \end{proof}

\subsection{A useful lemma to compute period maps}

The examples of the previous subsection suggest the following useful lemma to compute the maps $j^{dr}_{n+1}$. Of course, this will rely on us having already computed $F^0\U_{n}$ using Algorithm \ref{Algorithm - Compute HF on HEC or EC}, as well as the co-ordinates of the map $j^{dr}_{n-1}$. We suppose we have already computed some decomposition $p^{cr}_{n-1}(x)=p^H_{n-1}(x)u_{n-1}(x)$. Provided a certain lifting condition is satisfied, we can describe the decomposition at level $n$ exactly. 

\begin{lemma}
Suppose that $B_{n-1}$ is primitive in $F^0\U_{n-1}$ and that $p^{cr}_{n-1}(x)=\exp(\int_b^x \alpha B_{n-1}(x))\exp(j^{dr}_{n-1}(x))$ for some $\alpha$ a linear combination of $\alpha_w$. Then suppose that the lift $B_n$ of $B_{n-1}$ to $F^0\U_n$ is also primitive. Then \index{$j^{dr},j^{dr}_n$}$j^{dr}_n$ has coordinates
\[ 
j^{dr}_{n}(x):=j^{dr}_{n-1}(x)+p ^{cr}_{n}(x)- \exp\left( \int_b^x \alpha B_{n}(x)\right)\exp(j^{dr}_{n-1}(x)).
\]
\end{lemma}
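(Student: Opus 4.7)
The plan is to verify that $p^H_n(x) := \exp\bigl(\int_b^x \alpha B_n(x)\bigr)$ serves as a Hodge trivialisation at level $n$ and that the stated expression $j^{dr}_{n-1}(x) + v_n(x)$, with $v_n(x) := p^{cr}_n(x) - p^H_n(x)\exp(j^{dr}_{n-1}(x))$, is the corresponding Lie-algebra element in the decomposition $p^{cr}_n(x) = p^H_n(x)\exp(j^{dr}_n(x))$ of Section~\ref{Section 2}. Since $B_n$ is primitive and lies in $F^0\U_n$, the element $\int_b^x \alpha B_n(x)$ is primitive in $b^*F^0\U_n$, and hence by Remark~\ref{Remark - Exp and Log and Grouplike} its exponential is a group-like element of $F^0U^{dr}_n$, providing a bona fide Hodge trivialisation.

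Next, I would project $v_n(x)$ through $\U_n \twoheadrightarrow \U_{n-1}$. Compatibility of $p^{cr}_\bullet$ with the projection, together with the hypothesis that $B_n$ lifts $B_{n-1}$, sends this image to $p^{cr}_{n-1}(x) - \exp(\int_b^x \alpha B_{n-1}(x))\exp(j^{dr}_{n-1}(x)) = 0$ by the assumed decomposition at level $n-1$. Thus $v_n(x)$ lies in $K_n := \ker(R_n \twoheadrightarrow R_{n-1})$, the $K_v$-span of words in the $A_i$ of length exactly $n$. The key structural fact is that $K_n$ is annihilated from both sides by the augmentation ideal $I$ of $R_n$: any such product produces words of length exceeding $n$, which vanish in $R_n = R/I^{n+1}$.

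Three consequences follow in $R_n$: the elements $j^{dr}_{n-1}(x)$ and $v_n(x)$ commute (both products vanish), $\exp(v_n(x)) = 1 + v_n(x)$ (since $v_n(x)^2 = 0$), and $g \cdot v_n(x) = v_n(x)$ whenever $g - 1 \in I$. Assembling these gives $p^H_n(x)\exp(j^{dr}_{n-1}(x) + v_n(x)) = p^H_n(x)\exp(j^{dr}_{n-1}(x))(1 + v_n(x)) = p^H_n(x)\exp(j^{dr}_{n-1}(x)) + v_n(x) = p^{cr}_n(x)$ by the very definition of $v_n(x)$. Finally, since $p^{cr}_n(x)$ and $p^H_n(x)$ are both group-like, so is $\exp(j^{dr}_{n-1}(x) + v_n(x))$; hence its logarithm $j^{dr}_{n-1}(x) + v_n(x)$ is primitive and represents an element of $\text{Lie}(U^{dr}_n)$. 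This yields the desired decomposition, and the corresponding class in $F^0U^{dr}_n \setminus U^{dr}_n$ is exactly $j^{dr}_n(x) = j^{dr}_{n-1}(x) + v_n(x)$. The main obstacle is the filtration argument placing $v_n(x)$ in the top graded piece of $\U_n$; once this is in hand, the entire exponential identity collapses to routine linear algebra in a square-zero ideal.
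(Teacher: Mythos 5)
Your proposal is correct and follows essentially the same route as the paper: both identify the difference $v_n(x)=p^{cr}_n(x)-\exp\bigl(\int_b^x\alpha B_n(x)\bigr)\exp(j^{dr}_{n-1}(x))$, observe via the level-$(n-1)$ decomposition that it consists only of words of length exactly $n$, and then collapse the exponential identity using the fact that such elements are killed on both sides by the augmentation ideal, concluding with group-likeness from the primitivity of $B_n$. Your packaging of the computation through the square-zero ideal $K_n=\ker(R_n\twoheadrightarrow R_{n-1})$ is simply a tidier organization of the explicit expansion the paper carries out.
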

\begin{proof}
Suppose that we have a decomposition
\begin{equation}\label{Equation - computing j^dr_n eqn 1}
p^{cr}_{n-1}(x) = \exp\left(\int_b^x \alpha B_{n-1}(x)\right)\exp(j^{dr}_{n-1}(x)).
\end{equation}
As we did in the proofs of Propositions \ref{Prop - Level 3 Map EC} and \ref{Prop - Level 4 Map EC} consider the difference
\begin{equation}\label{Equation - computing j^dr_n eqn 2}
\begin{split}
p & ^{cr}_{n}(x) - \exp\left(\int_b^x \alpha B_{n}(x)\right)\exp(j^{dr}_{n-1}(x)) \\
=  p & ^{cr}_{n}(x) - \exp\left(\int_b^x \alpha \left(B_n(x)-B_{n-1}(x)\right)\right)\exp\left(\int_b^x \alpha B_{n-1}(x)\right)\exp(j^{dr}_{n-1}(x)). \\
\end{split}
\end{equation}

Since we have the previously stated decomposition (\ref{Equation - computing j^dr_n eqn 1}) at level $n-1$ then \[\exp(\int_b^x \alpha B_{n-1}(x))\exp(j^{dr}_{n-1}(x)) = p^{cr}_{n-1}(x)+\text{ words of degree }n\]. Therefore (\ref{Equation - computing j^dr_n eqn 2}) contains only words of length $n$. Therefore \[\exp(p ^{cr}_{n}(x) - \exp(\int_b^x \alpha B_{n}(x))\exp(j^{dr}_{n-1}(x)) )\] commutes with $\exp(\int_b^x \alpha B_{n}(x))\exp(j^{dr}_{n-1}(x))$ and we find that 

\begin{align*}
&\exp\left(\int_b^x \alpha B_{n}(x)\right)\exp\left(j^{dr}_{n-1}(x)+p ^{cr}_{n}(x)- \exp\left( \int_b^x \alpha B_{n}(x)\right)\exp(j^{dr}_{n-1}(x)) \right) \\
=&  \exp\left(\int_b^x \alpha B_{n}(x)\right)\exp\left(j^{dr}_{n-1}(x)\right)\exp \left(p ^{cr}_{n}(x)- \exp\left( \int_b^x \alpha B_{n}(x)\right)\exp(j^{dr}_{n-1}(x))\right)\\
 =&  \left(p^{cr}_{n}(x) + \exp\left(\int_b^x \alpha B_{n}(x)\right)\exp(j^{dr}_{n-1}(x)) - p^{cr}_{n}(x) \right)\\
& \times  \exp \left(p ^{cr}_{n}(x)- \exp\left( \int_b^x \alpha B_{n}(x)\right)\exp(j^{dr}_{n-1}(x))\right)\\ 
 =&  p^{cr}_{n}(x) + \exp\left(\int_b^x \alpha B_{n}(x)\right)\exp(j^{dr}_{n-1}(x)) - p^{cr}_{n}(x) + p ^{cr}_{n}(x) \\
& - \exp\left( \int_b^x \alpha B_{n}(x)\right)\exp(j^{dr}_{n-1}(x)) \\
 =&  p^{cr}_{n}(x)
\end{align*}
This gives us a decomposition of the form
\[
p^{cr}_{n}(x)=\exp\left(\int_b^x \alpha B_{n}(x)\right)\exp\left(j^{dr}_{n-1}(x)+p ^{cr}_{n}(x)- \exp\left( \int_b^x \alpha B_{n}(x)\right)\exp(j^{dr}_{n-1}(x)) \right)
\]
Since $B_n$ is primitive then $\exp(\int_b^x \alpha B_n(x))$ is group-like and we conclude that we can define $j^{dr}_n$ as in the statement of the theorem. 
\end{proof}

\subsection{The level 2 period map on affine hyperelliptic curves}
We can use the computation of the Hodge filtration on $\U_2$ for a generic genus $g$ odd hyperelliptic curve $C$ that we determined in Proposition \ref{Proposition - HF on HEC Level 2} to determine the level $2$ de Rham period map on $X:=C- \lbrace \PP \rbrace$. Fix a basepoint $b \in X(K)$. As above, we will find that

\[
p^{cr}_2(x) = 1 + \sum _{k=0}^{2g-1} \int^x_b\alpha_k A_k + \sum _{k,l = 0}^{2g-1} \int_b^x \alpha_k\alpha_l A_kA_l
\]

\begin{proposition} \label{Prop - level 2 map hec}
The map $j^{dr}_2: X_v(K_v) \rightarrow F^0U^{dr}_2 \setminus U^{dr}_2$ is defined by 

\begin{align*}
x \mapsto u_2(x):=& \sum_{k=0}^{g-1} \int_b^x \alpha_k A_k + \frac{1}{2} \sum_{k,l=0}^{g-1} \int_b^x \alpha_k\alpha_l [A_k,A_l] + \sum_{k=0}^{g-1}\sum_{l=g}^{2g-1} \int_b^x \alpha_k \alpha_l [A_k,A_l]\\
& + \sum_{k=g}^{2g-1} \sum_{\substack{ 0 \leq i < g \\ g \leq j < 2g}} c_{ijk}\int_b^x \alpha_k[A_j,A_i] 
\end{align*}

\end{proposition}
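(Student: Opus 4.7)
The plan is to mimic the strategy of Propositions \ref{Prop - Level 3 Map EC} and \ref{Prop - Level 4 Map EC}: exhibit an explicit factorisation $p^{cr}_2(x) = p^H_2(x)\exp(u_2(x))$ in which $p^H_2(x) \in x^*F^0\U_2$ is group-like and $u_2(x)$ is primitive, whence by Proposition \ref{Proposition - Classifying Space Good Redn} we may take $j^{dr}_2(x) = u_2(x)$ as a representative of $[P^{dr}_2(x)]$ in $F^0U^{dr}_2 \setminus U^{dr}_2$.

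First I would record from Lemma \ref{Lemma - phi invariant path} that
\[
p^{cr}_2(x) = 1 + \sum_{k=0}^{2g-1} \int_b^x \alpha_k\,A_k + \sum_{k,l=0}^{2g-1} \int_b^x \alpha_k\alpha_l\,A_kA_l,
\]
and from Proposition \ref{Proposition - HF on HEC Level 2} that the fibre $x^*F^0\U_2$ is spanned over $K_v$ by $1 + \sum_{i<g,\,j\geq g} a_{ij}(x)[A_i,A_j]$, by the primitive lifts $B_k := A_k + \sum_{i<g,\,j\geq g} c_{ijk}[A_i,A_j]$ for $g \leq k < 2g$, and by the monomials $A_rA_s$ for $r,s \geq g$. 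Since each $B_k$ is a $K$-linear combination of the primitive $A_k$ and of commutators of primitives (which are themselves primitive by Remark \ref{Remark - Exp and Log and Grouplike}), I would take as Hodge trivialisation
\[
p^H_2(x) := \exp\!\left( \sum_{k=g}^{2g-1} \int_b^x \alpha_k\, B_k \right),
\]
which is group-like by construction; modulo words of length $>2$ its expansion lies in the $K_v$-span of the generators of $x^*F^0\U_2$ (possibly after small corrections by the $a_{ij}(x)$-twist of the generator $1 + \sum a_{ij}[A_i,A_j]$ and by length-two terms $\mu_{rs}A_rA_s$ constrained by group-likeness, $\mu_{rs}+\mu_{sr} = \beta_r\beta_s$).

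The core step is then a direct shuffle calculation: expand $\exp(u_2(x))$ using the stated formula for $u_2(x)$, left-multiply by $p^H_2(x)$, and verify coefficient-by-coefficient that the result equals $p^{cr}_2(x)$, using throughout the shuffle identity (\ref{Iterated Integrals Property 1}) in the form $(\int_b^x \alpha_k)(\int_b^x \alpha_l) = \int_b^x \alpha_k\alpha_l + \int_b^x \alpha_l\alpha_k$. The comparison of the length-two coefficient of each word $A_pA_q$ splits naturally into four subcases according to whether each of $p,q$ belongs to $\{0,\dots,g-1\}$ or to $\{g,\dots,2g-1\}$; in each case, combining the appropriate summand of $u_2(x)$ with the shuffle identity and the quadratic part of $\exp$ reduces the verification to direct algebra. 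In particular, the reversed commutator $[A_j,A_i]$ in the final summand of $u_2(x)$ is present precisely to cancel the $c_{ijk}$-twist contributed by the quadratic expansion of $\exp(\sum_{k \geq g} \int_b^x \alpha_k B_k)$ in the mixed cases $p<g,\,q\geq g$ and $p\geq g,\,q<g$. Primitivity of $u_2(x)$ is immediate since it is a $K_v$-linear combination of the $A_k$ and of commutators $[A_k,A_l]$. The main obstacle is the bookkeeping of the fourfold case split, and in particular ensuring that both the $a_{ij}(x)$-twist of the generator $1 + \sum a_{ij}[A_i,A_j]$ and the $c_{ijk}$-constants propagate consistently on both sides of the comparison.
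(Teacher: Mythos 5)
Your strategy is the same as the paper's: exhibit the factorisation $p^{cr}_2(x)=p^H_2(x)\exp(u_2(x))$ with $p^H_2(x)$ group-like in $x^*F^0\U_2$, and check it word by word using the shuffle relation. There is, however, one step that fails as written. With $p^H_2(x):=\exp\left(\sum_{k=g}^{2g-1}\int_b^x\alpha_k\,B_k\right)$ literally as displayed, the coefficient of a word $A_pA_q$ with $p,q\geq g$ and $p\neq q$ in the product $p^H_2(x)\exp(u_2(x))$ is $\frac{1}{2}\int_b^x\alpha_p\int_b^x\alpha_q=\frac{1}{2}\left(\int_b^x\alpha_p\alpha_q+\int_b^x\alpha_q\alpha_p\right)$, coming from the quadratic term of the exponential, since $u_2(x)$ and the cross terms contribute nothing in these slots; but $p^{cr}_2(x)$ has coefficient $\int_b^x\alpha_p\alpha_q$ there, and these differ in general. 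So the coefficient-by-coefficient verification breaks in the $p,q\geq g$ case unless you actually carry out the length-two correction you only gesture at: the correct choice is $\mu_{rs}=\int_b^x\alpha_r\alpha_s$, which satisfies your group-likeness constraint $\mu_{rs}+\mu_{sr}=\beta_r\beta_s$ precisely by the shuffle relation. The paper implements this by inserting the extra primitive term $\frac{1}{2}\sum_{k,l=g}^{2g-1}\int_b^x\alpha_k\alpha_l\,[A_k,A_l]$ into the exponent of $p^H_2(x)$; this stays inside $F^0$ because the words $A_rA_s$ with $r,s\geq g$ are among the generators of $F^0\U_2$ from Proposition \ref{Proposition - HF on HEC Level 2}, and it keeps $p^H_2(x)$ group-like since it is a combination of commutators of primitives. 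With that emendation, your fourfold case analysis (including the cancellation of the $c_{ijk}$-twist against the reversed commutators $[A_j,A_i]$ in $u_2(x)$) goes through exactly as in the paper's proof.
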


\begin{proof}

A careful computation shows that 

\begin{align*}
p^{cr}_2(x)=& \exp\left( \sum_{k=g}^{2g-1} \int_b^x \alpha_k \left(A_k + \sum_{\substack{ 0 \leq i < g \\ g \leq j < 2g}} a^{2,1+j+i(2g)}_{1,k+1} [A_i,A_j]\right) + \frac{1}{2} \sum_{l,k=g}^{2g-1} \int_b^x \alpha_k\alpha_l [A_k,A_l]\right)\\ 
& \times \exp(u_2(x))
\end{align*}
where $u_2(x)$ is as in the statement of the proposition and we are done. 
  \end{proof}

\section{De Rham period maps with tangential basepoints} \label{Section 6}

\subsection{Computing iterated integrals with tangential basepoints}
Throughout Sections \ref{Section 2} to \ref{Section 5} we have assumed that the basepoint $b$ is a $K$-rational point on $X$. It may be the case that we cannot readily determine such a rational basepoint. Indeed part of the value of studying the unipotent Albanese map is that they should give us the means to compute the rational or $S$-integral points on a curve (for some finite set of primes $S$). Therefore, we need to find a way to  circumvent the obstacle of finding a rational basepoint and this is afforded to us by the theory of tangential basepoints or basepoints at infinity. These basepoints correspond to tangent vectors at the points of $D$ and philosophically speaking they should give us access to a wider range of canonical maps. To replace the rational basepoint $b$ in the maps $j^{dr}_n$ with a tangential basepoint will require a method of ``analytic continuation" to the tangential basepoint. We briefly outline the method by which we can compute this, following the description given by Besser and Furusho in \cite{besfur06}. 

We assume once more that $C$ is a smooth projective curve over $K$ a number field punctured at a single point $P$ defined over $K$ and let $X:= C- \lbrace P \rbrace$. Let $v$ be a non-Archimedean valuation on $K$ and take $K_v$ the completion of $K$ at $v$. Let $\U$ be the universal unipotent connection on $X_v$, and $\UU$ its logarithmic extension to $C_v$. Let \index{$t$}$t$ be a local parameter at $P$ inducing a parameter $\overline{t}$ on \index{$T_{P}C$}$T_{P}C$, which is a normal vector at $P$ taking the value $1$ at the tangent vector $b=\frac{d}{dt}$. Let \index{$T^0_{P}C$}$T^0_{P}C:=T_{P}C-\lbrace 0 \rbrace$. Then we can associate to each logarithmic connection $\overline{\mathcal{V}}$ on $C$ with logarithmic poles at $P$ a connection on $T^0_{P}C$:

\begin{definition}
Define \index{$\Res_P\,\overline{\mathcal{V}}$}$\Res_P\,\overline{\mathcal{V}}:= (P^*\overline{\mathcal{V}} \otimes \O_{T^0_PC}, d+\Res_P\Omega d\log \overline{t})$ where $\Omega=(\omega_{ij})$ is the connection matrix of $\overline{\mathcal{V}}$ near $P$ and $\Res_P\Omega= (\res_P\omega_{ij})$ is the residue matrix of $\Omega$ at $P$.
\end{definition}

In \cite[\S 15, Th\'{e}orie alg\'{e}brique]{del87} Deligne shows that the definition of $\Res_P$ does not depend on the choice of local parameter $t$ at $P$. The map $\Res_P$ gives us a functor $\Res_P : \Un(X) \rightarrow \Un(T^0_PC)$ which associates to a unipotent connection $\mathcal{V}$ on $X$ the residue connection $\Res_P\overline{\mathcal{V}}$ of the canonical logarithmic extension of $\mathcal{V}$. Using this construction we obtain more fibre functors $P$ which we now define: 

\begin{definition}
Let $y \in T^0_PC(K_v)$ and let $\overline{\mathcal{V}}$ be a logarithmic connection on $C$ with logarithmic poles at $P$. Then the fibre functor $e_y: \Un(X) \rightarrow \text{Vect}_{K_v}$ at $y$ is such that $\mathcal{V} \mapsto y^*\Res_P \overline{\mathcal{V}}$ where $\overline{\mathcal{V}}$ is the canonical logarithmic extension of $\mathcal{V}$. 
\end{definition} 

The existence of a canonical Frobenius invariant de Rham path between any two points $x,y$ tangential or otherwise was demonstrated in \cite[Theorem 4.1]{besfur06}. In loc.cit. Proposition 4.5 they identify the path $p_{x,y}$ from $x$ to $y$ for given points $x \in X_v(K_v)$ and $y \in T_P^0C(K_v)$ as being the constant term of a formal local solution to the differential equation defined by the logarithmic connection $\overline{\mathcal{V}}$. We now outline this construction in further detail. 

There is a basis of global solutions to $\Res_P \overline{\mathcal{V}}=0$ with coefficients in $k[\log \overline{t}]$, with all solutions having the form $\exp( \Res_P\, \Omega \log \overline{t})\cdot g$ with $g \in P^*\overline{\mathcal{V}}$. The exponential will be finite since $\overline{\mathcal{V}}$ and hence $\Res_P\,\overline{\mathcal{V}}$ are unipotent. We can also find formal local horizontal solutions $s$ of  $\overline{\mathcal{V}}$ near $P$ with coefficients in the ring $K_v[[t]][\log t]$. Here $\log t$ is treated as a formal variable with the property that $d \log t = dt/t$. 

To analytically continue the horizontal section $s$ near $P$ to the tangent vector $\frac{d}{dt}$ we specialise to the fibre $P^*\overline{\mathcal{V}} \otimes \O_{T^0_PC}$ by taking the constant term of this formal solution: that is, we let $t=\log t =0$. Call this constant term $s_0$. In Proposition 4.5, Besser and Furusho show that the path $p_{x,y}$ is the path  $\overline{\mathcal{V}} \mapsto ( s \mapsto \exp(\Res_P C'_n \log \overline{t})\cdot s_0) )$. The specialisation of $\exp(\Res_P \Omega \log \overline{t})\cdot s_0$ at $\overline{t}=1$ then is the analytic continuation along Frobenius of the horizontal section $s$ to the punctured tangent space, and this is precisely the constant term $s_0$. We recast this below as a computational algorithm:

\begin{algorithm}\label{Algorithm - Computing Integrals with TB}
(Computing iterated integrals with tangential basepoints on $X=C-D$) \newline

\textbf{Input}
\begin{itemize}
\item A smooth projective curve $C$ over a number field $K$ of characteristic $0$, a $K$-point $P$, $X:=C-\lbrace P \rbrace$ with affine model $f(x,y)=0$. 
\item A non-Archimedean valuation $v$ on $K$ and the completion $K_v$ of $K$ at $v$. 
\item A tangential basepoint $b$ and $x \in X_v(K_v)$.
\item Differentials $\omega_1,..,\omega_n \in \Omega^1_{C/K_v}(P)$ with at worst logarithmic poles at $P$.  
\end{itemize}

\textbf{Output}

\begin{itemize}
\item The value of 
\[
\int_b^x \omega_1...\omega_n \in K_v
\]
\end{itemize}

\textbf{Algorithm}
\begin{enumerate}
\item[I] If $x \in ]P[$ (the residue disk of $P$):
\begin{enumerate}
\item[(1)] Let $\partial_b$ be the derivation associated to $b$ and let $t$ be a local parameter at $P$ such that $\partial_bt=1$. 
\item[(2)] Let $\sigma$ be a dummy variable and define 
\begin{align*}
\epsilon &= (x(\sigma), y(\sigma)) \\
z&=t(x) \\
\omega_i(t)&= f_i(t)dt
\end{align*}
where $f_i(t)dt$ is the Laurent expansion of $\omega_i$ at $P$ in $t$ for each $i$. 
\item[(3)] Compute the formal iterated integral $\int_{\epsilon}^x \omega_1..\omega_n$:
\[\int_{\sigma}^z f_1(t_1) \left( \int_{\sigma}^{t_1} f_2(t_2)...\left( \int_{\sigma}^{t_{n-1}}f_{r-1}(t_{n-1}) \left(\int_{\sigma}^{t_n}f_n(t_n)dt_n \right)dt_{n-1} \right)...dt_2\right)dt_1
\] with output the logarithmic Coleman function
\[
a^x_0(\sigma) + a^x_1(\sigma)\log(\sigma) + a^x_2(\sigma) \log(\sigma)^2+....
\]
where the $a^x_i(\sigma)$ analytic functions in the variable $\sigma$.
\item[(4)] Define
\[
\int_b^x \omega_1...\omega_n := a^x_0(0)
\] i.e. set $\sigma=\log \sigma = 0$ in the output of the previous step. 
\end{enumerate}
\item[II] Else $x \not \in ]P[$: 
\begin{enumerate}
\item[(1)] Choose $y \in ]P[$ and do:
\begin{enumerate}
\item[(a)] For $i=1,..,n$ compute
\[
\int_y^x \omega_1...\omega_i
\]
using Coleman integration. 
\item[(b)] For $i=1,..,n$ compute
\[
\int_b^y \omega_i...\omega_n
\]
using Step I.
\end{enumerate}
\item[(2)] Define 
\[
\int_b^x \omega_1...\omega_n := \sum_{i=0}^n \int_y^x \omega_1..\omega_i \int_b^y \omega_{i+1}..\omega_n
\]
where the empty integral is defined to be $1$. 
\end{enumerate}
\end{enumerate}
\end{algorithm}

If we had chosen a different tangential basepoint $b'$ then we need to make a different choice of normalising parameter $t'$ with $\partial_{b'}t'=1$, where $\partial_{b'}$ is the derivation associated to $b'$. In \cite[Lemma 3.2]{bb15} it is shown that the above definition of the integral is independent of the choice of parameter $t'$ satisfying this normalisation condition. For points $x$ lying outside $]P[$ we made use the following co-product formula for iterated integrals \cite[Lemma 5.2.3]{balakrishnan11}: if $x,y,z$ are points on $C$ such that that a path is to be taken from $x$ to $z$ through $y$ and $\omega_1,...,\omega_n$ are holomorphic $1$-forms at these points then 

\[
\int_x^z \omega_1..\omega_n = \sum_{i=0}^n \int_y^z\omega_1...\omega_i\int_x^y \omega_{i+1}..\omega_n
\]

It is clear from the definition of the integral $\int_b^x \omega_1...\omega_n$ that the co-product formula above still holds even if one of the endpoints of the path is a tangent vector. Hence, if we want to compute $\int_b^x \omega_1...\omega_n$ with $x$ outside of $]P[$ we find a point $y \in ]P[$ and split the path at $y$. The co-product formula then gives us integrals of the form 

\[
\int_y^x \omega_1...\omega_i, \int_b^y \omega_{i+1}...\omega_n
\]

where the first integral is computed using the algorithms in loc.cit. and the second integral is computed using Algorithm \ref{Algorithm - Computing Integrals with TB}. Note also that if the differentials $\omega_i$ are regular also at $P$ then in fact the above algorithm simply gives us 

\[
\int_{P}^x \omega_1..\omega_n
\]

We wish to make applications of this to computing the de Rham period maps. If we replace the rational basepoint $b$ with a tangential basepoint we obtain a de Rham path space $\pi_{1,dr}(X; b, x) = \Isom^{\otimes}(e_b,e_x)$ for $x \in X_v(K_v)$. This has a Hodge structure which is a limit Hodge structure of that on $\pi_{1,dr}(X; y,x)$ as $y$ varies over $X(K_v)$ (\cite{kim10}). As noted in loc.cit. it is sufficient to compute the Hodge filtration at rational basepoints. 

Instead, we need to consider the logarithmic connection $\UU_n$ near $\PP$: the restriction to the open $Y$ is $\U'_n$ which is a logarithmic connection with connection matrix which can be computed using the algorithms of Section \ref{Section 3}. Near $\PP$ we can compute the parallel transport of $1 \in b^*\Res_{\PP}\UU_n$ to $x^*\UU_n$ for an $x \in Y$ using Proposition 4.5 in loc.cit. as described above. 

Let us consider how we do this in practice. Assume for now that $C$ is an elliptic curve or odd hyperelliptic curve, that $P$ is the point at infinity $\PP$ and fix a branch of the $p$-adic logarithm. The connection $\U'_n$ is given  by $\nabla'=d + C'_n$ on $Y$ as computed by Algorithm \ref{Algorithm - Computing GT on UC of HEC}. This connection matrix then defines an iterated integral which gives the parallel transport of $1 \in b^*\Res_{\PP}\UU_n$ to $x^*\UU_n$ as before. By computing a decomposition into a product of the Hodge trivialisation and a trivialisation of $F^0U^{dr}_n\setminus U^{dr}_n$ we may then describe the coordinates of the map $j^{dr}_n$ by an iterated integral with tangential basepoint. Finally we use Algorithm \ref{Algorithm - Computing Integrals with TB} to compute the image of this map for $x \in X_v(K_v)$.  

\subsection{The levels $2$ and $3$ period maps on elliptic curves}

In this section we provide some explicit examples of the period maps on elliptic curves when the basepoint is tangential. First we consider the level $2$ map which was considered in \cite{kkb11}. Throughout this section recall that we take $F \in K(C)$ with a simple pole at $\PP$ such that $dF-\alpha_1$ is regular at $\PP$. 

\begin{proposition} \label{Prop - Level 2 TB Map EC}
With $b$ a tangential basepoint at $\PP$ on the elliptic curve $X$, the level $2$ unipotent Albanese map $j^{dr}_2: X_v(K_v) \rightarrow F^0U^{dr}_2 \setminus U^{dr}_2$ on points $x \in X_v(K_v) \cap Y_v(K_v)$ is given by

\[
x \mapsto \int_b^x \alpha_0 A_0+ \int_b^x(F\alpha_0+\alpha_0\alpha_1')[A_0,A_1]
\]

where $\alpha_1' = \alpha_1-dF$. 

\end{proposition}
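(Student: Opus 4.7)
The strategy is to mirror the rational-basepoint level-$2$ computation of Balakrishnan-Kedlaya-Kim \cite{kkb11} but now read the Frobenius-invariant path off the logarithmic connection $\U'_2 = (R_2 \otimes \O_Y, d+C'_2)$ on the open $Y \subset C$ containing $\PP$. Two earlier inputs are essential: Proposition~\ref{Proposition - GT for EC Level 2}, which provides the explicit connection matrix $C'_2$---its non-trivial entries in the new block $D'_2$ are $-\alpha_0$, the (signed) logarithmic form $\alpha_1'$, and the correction terms $\pm F\alpha_0$ on $Y$---and Proposition~\ref{Proposition - HF on UC of EC Level 2}, which shows $F^0\UU_2$ is generated over $Y$ by $1, A_1, A_1^2$, exactly as at a rational basepoint.

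First I would compute $p^{cr}_2(x)$ for $x \in Y_v(K_v)$ as the parallel transport on $\U'_2$ of $1 \in b^*\Res_{\PP}\UU_2$. Applying the Besser-Furusho formalism \cite{besfur06} via Algorithm~\ref{Algorithm - Computing Integrals with TB}, this yields an expansion $p^{cr}_2(x) = 1 + \sum_{|w|\le 2} \bigl(\int_b^x \omega_w\bigr)\, w$, where $\omega_w$ is the iterated $1$-form read off $-C'_2$ along $w$; crucially the $\pm F\alpha_0$ entries of $D'_2$ produce tangential-basepoint integrals of the genuinely logarithmic form $F\alpha_0$ at $\PP$, well-defined via Step I of Algorithm~\ref{Algorithm - Computing Integrals with TB}. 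I would then seek a decomposition $p^{cr}_2(x) = \exp(p^H_2(x))\exp(u_2(x))$ with $u_2(x)$ primitive and $p^H_2(x)$ in the $F^0$-part. Since $A_1$ is primitive, the natural ansatz $p^H_2(x) = \bigl(\int_b^x \alpha_1\bigr) A_1$ parallels the rational case; matching coefficients word by word via the shuffle identities \eqref{Iterated Integrals Property 1}, \eqref{Iterated Integrals Property 2} gives $\int_b^x \alpha_0$ for the coefficient of $A_0$, while for $[A_0,A_1]$ the $\alpha_1'$-entries of $D'_2$ contribute $\int_b^x \alpha_0\alpha_1'$ and the $F\alpha_0$-entries contribute $\int_b^x F\alpha_0$, combining to the claimed $\int_b^x(F\alpha_0+\alpha_0\alpha_1')$.

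The main obstacle is the careful bookkeeping of tangential-basepoint integrals on the logarithmic connection: one must (i) justify a tangential-basepoint analogue of Lemma~\ref{Lemma - phi invariant path} giving $p^{cr}_2(x)$ as the above generating series when parallel transport is performed on $\U'_2$ rather than $\U_2$, using the Res-functor machinery to interpret the starting element $1 \in b^*\Res_{\PP}\UU_2$; (ii) verify that the $F\alpha_0$ contributions pair correctly against the shuffle expansion of $\exp\bigl((\int_b^x \alpha_1)A_1\bigr)\exp(u_2(x))$, via the product and coproduct identities for tangential iterated integrals established in \cite{besfur06}; and (iii) check that the resulting $u_2(x)$ is primitive, which is automatic since $A_0$ and $[A_0,A_1]$ are primitive in $b^*\U_2$, so that $u_2(x)$ represents a well-defined class in $F^0U^{dr}_2 \setminus U^{dr}_2$.
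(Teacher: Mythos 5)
Your proposal follows essentially the same route as the paper: read $p^{cr}_2(x)$ off the gauge-transformed connection matrix $C_2'$ of Proposition \ref{Proposition - GT for EC Level 2} as a generating series of tangential-basepoint iterated integrals, then factor it against the generators $1, A_1, A_1^2$ of $F^0\UU_2$ using the shuffle identities. The one step that would fail as written is your ansatz $p^H_2(x) = \bigl(\int_b^x \alpha_1\bigr)A_1$: with $b$ tangential at $\PP$ the form $\alpha_1$ has a double pole there, so $\int_b^x\alpha_1$ is not defined by the Besser--Furusho/residue formalism (Algorithm \ref{Algorithm - Computing Integrals with TB} requires at worst logarithmic poles), and in any case the $A_1$-coefficient of $p^{cr}_2(x)$ produced by $C_2'$ is $\int_b^x\alpha_1'$ with $\alpha_1'=\alpha_1-dF$, not $\int_b^x\alpha_1$. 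The correct Hodge trivialisation is $\exp\bigl(\int_b^x\alpha_1'\,A_1\bigr)$; this is exactly the point where the tangential case departs from the rational-basepoint computation of \cite{kkb11}, and it is why the answer involves $\alpha_1'$ and the extra $F\alpha_0$ term at all. With that substitution the word-by-word matching you describe goes through and reproduces the paper's proof.
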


\begin{proof}
In Proposition \ref{Proposition - GT for EC Level 2} we computed that the connection matrix of logarithmic extension $\UU_2$ over $Y$ can be taken to be 

\[
C_2' = \left( 	\begin{array}{ccccccc}
					0 & 0 & 0 & 0 & -\alpha_0 & 0 & 0 \\
					0 & 0 & 0 & 0 & 0 & -\alpha_0 & -F \alpha_0\\
					0 & 0 & 0 & 0 & -\alpha_1' & 0 & F \alpha_0 \\
					0 & 0 & 0 & 0 & 0 & -\alpha_1' & 0 \\
					0 & 0 & 0 & 0 & 0 & 0 & -\alpha_0 \\
					0 & 0 & 0 & 0 & 0 & 0 & -\alpha_1' \\
					0 & 0 & 0 & 0 & 0 & 0 & 0
				\end{array}\right)
\]

where $\alpha_1'=\alpha_1-dF$ for $F \in K(C)$ with a simple pole at $\PP$ such that $\alpha_1'$ has at most a simple pole there also. Let $t$ be a local parameter at $\PP$. The parallel transport map from $1$ at $b=\dfrac{d}{dt}$ to $x \in Y_v(K_v)$ then is given by

\begin{align*}
p^{cr}_2(x)  =&  1+ \int_b^x \alpha_0 A_0 + \int_b^x \alpha_1' A_1 + \int_b^x \alpha_0\alpha_0 A_0A_0 + \int_b^x (F\alpha_0+\alpha_0\alpha_1') A_0A_1 \\
& + \int_b^x( \alpha_1'\alpha_0-F\alpha_0)A_1A_0 + \int_b^x \alpha_1'\alpha_1'A_1A_1 \\
= &  1+ \int_b^x\alpha_0A_0 + \int^x_b \alpha_1'A_1+\frac{1}{2}\left( \int_b^x \alpha_0 \right)^2 A_0^2 + \int_b^x \alpha_0 \alpha_1' [A_0,A_1]  \\
& + \int_b^x (\alpha_1'\alpha_0+ \alpha_0\alpha_1')A_1A_0 + \int_b^x F \alpha_0 [A_0,A_1] + \frac{1}{2}\left( \int_b^x \alpha_1' \right)^2 A_1^2 \\
= &  1+ \int_b^x\alpha_0A_0 + \int^x_b \alpha_1'A_1+\frac{1}{2}\left( \int_b^x \alpha_0 \right)^2 A_0^2 + \int_b^x (F\alpha_0+\alpha_0 \alpha_1') [A_0,A_1] \\
 & + \int_b^x \alpha_1'\int_b^x\alpha_0A_1A_0  + \frac{1}{2}\left( \int_b^x \alpha_1' \right)^2 A_1^2 \\
 = & \exp\left( \int_b^x \alpha_1' A_1\right) \exp\left( \int_b^x \alpha_0 A_0 + \int_b^x(F\alpha_0+\alpha_0\alpha_1')[A_0,A_1]\right)
\end{align*}

Recall that we computed the Hodge filtration on $\UU_2$ over $Y$ to be generated by $1,A_1,A_1^2$. Hence we deduce that

\[
j^{dr}_2(x)= \int_b^x \alpha_0 A_0+ \int_b^x(F\alpha_0+\alpha_0\alpha_1')[A_0,A_1]
\]

for $x \in X_v(K_v) \cap Y_v(K_v)$.  \end{proof}

\begin{remark}
To define the map $j^{dr}_2$ at points $x \in X_v(K_v)-Y_v(K_v)$ with tangential basepoint $b$ we proceed as follows: first we need to compute $p^{cr}_2(x)$. Recall that $p^{cr}_n$ is the parallel transport from $b$ to $x$ under $\UU_n$ so we are looking for a horizontal section $s$ which is the analytic continuation from $1 \in b^* \Res_P \UU_n$. Fix some $z \in Y_v(K_v) \cap X_v(K_v)$ and compute $p^{cr}_2(z)$ which is an element of $z^*\U'_2$. Compute $(G_{2}^{-1})_Zp^{cr}_2(z)= s_z$. Given this initial condition for $s$ over $X$, we find that $p^{cr}_2(x)$ is given by 

\[
(G_2^{-1})_zp^{cr}_2(z) + \sum_{|w| \leq 2}\int_z^x \alpha_w w
\]

We then  compute $j^{dr}_2(x)$ as before by expressing $p^{cr}_2(x)=p^H(x)u_2(x)$ and putting $j^{dr}_2(x)=u_2(x)$. 
\end{remark}

We now move onto a more complicated example at level $3$ but the principle here is the same. 

\begin{proposition} \label{Prop - Level 3 TB Map EC}
With $b$ a tangential basepoint at $\PP$ on the elliptic curve $X$, the level $3$ unipotent Albanese map $j^{dr}_3: X_v(K_v) \rightarrow F^0U^{dr}_3 \setminus U^{dr}_3$ on points $x \in X_v(K_v) \cap Y_v(K_v)$ is given by

\[
\begin{split}
x \mapsto & \int_b^x \alpha_0A_0  + \int_b^x (\alpha_0\alpha'_1+ F\alpha_0) [A_0,A_1]+\frac{1}{2}\int_b^x(\alpha_0\alpha_1'\alpha_0+F\alpha_0\cdot\alpha_0-\alpha_0\cdot F\alpha_0)[A_0,[A_1,A_0]]  \\
	& +  \int_b^x (\alpha_0\alpha_1'\alpha_1'+F\alpha_0.\alpha_1'+\alpha_0'- \lambda\alpha_1')				[[A_0,A_1],A_1]
\end{split}
\]

where $\alpha_0' = \frac{1}{2}F^2\alpha_0-\lambda dF$ and $\alpha_1' = \alpha_1-dF$. 

\end{proposition}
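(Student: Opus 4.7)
The plan is to follow closely the strategy of Proposition \ref{Prop - Level 2 TB Map EC}, upgraded to level $3$. The first step is to produce the connection matrix $C'_3$ of the restriction $\U'_3$ of $\UU_3$ to the open $Y$. This matrix is determined by gauge-transforming $C_3$ by the $G_3$ computed in Proposition \ref{Proposition - GT on UC of EC Level 3}; in effect, one takes the level $2$ matrix $C'_2$ from the proof of Proposition \ref{Proposition - GT for EC Level 2} and appends the extra block of length-$3$ columns coming from the action of $G_3$ on $A_0, A_1$ and $1$. The entries of this new block will be $\Q$-linear combinations of $\alpha_0,\alpha'_1:=\alpha_1-dF,\,F\alpha_0,\,\tfrac12 F^2\alpha_0$ and $\alpha'_0:=\tfrac12 F^2\alpha_0-\lambda dF$, each with at worst a logarithmic pole at $\PP$ (the constant $\lambda$ is exactly what enforces this property for the last coefficient).

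Next, following the construction of Besser--Furusho recalled in Algorithm \ref{Algorithm - Computing Integrals with TB}, the Frobenius-invariant path $p^{cr}_3(x)$ with tangential basepoint $b=\frac{d}{dt}$ and endpoint $x\in Y_v(K_v)\cap X_v(K_v)$ is the parallel transport of $1\in b^*\Res_\PP \UU_3$ along the connection $d+C'_3$. Expanding the path-ordered exponential, this gives
\[
p^{cr}_3(x)=1+\sum_{|w|\le 3}\Bigl(\int_b^x \beta_w\Bigr)\,w,
\]
where each $\beta_w$ is a word in the $1$-forms appearing as entries of $C'_3$, and the integrals are those defined by Algorithm \ref{Algorithm - Computing Integrals with TB}. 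I would write this out systematically by grading according to word length, exactly as in the proof of Proposition \ref{Prop - Level 2 TB Map EC}, reusing the level $2$ computation for the length-$\le 2$ truncation.

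The third step is the decomposition $p^{cr}_3(x)=p^H_3(x)\,u_3(x)$. Restricted to $Y$ the Hodge filtration $F^0\UU_3$ has the same generators $1,\,A_1+\lambda[[A_0,A_1],A_1],\,A_1^2,\,A_1^3$ as over $X$, since the gauge transformations preserve the filtration by construction. One therefore tries the ansatz
\[
p^{cr}_3(x)=\exp\!\Bigl(\int_b^x \alpha'_1\bigl(A_1+\lambda[[A_0,A_1],A_1]\bigr)\Bigr)\exp\!\bigl(u_3(x)\bigr),
\]
with $u_3(x)$ the candidate in the statement. Using the shuffle relations \eqref{Iterated Integrals Property 1}--\eqref{Iterated Integrals Property 2} for iterated integrals (which continue to hold with tangential endpoints, cf.\ the co-product formula discussed after Algorithm \ref{Algorithm - Computing Integrals with TB}), one expands the right-hand side and matches it coefficient-by-coefficient against the expansion of $p^{cr}_3(x)$. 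Since each commutator on the right-hand side is primitive, the exponentials remain group-like, and by Proposition \ref{Proposition - Coordinate ring of Pdr} they genuinely land in $F^0 U_3^{dr}$ and $U_3^{dr}$ respectively.

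The main obstacle I expect is the bookkeeping: there are $15$ words of length $\le 3$, and one must consistently package the $F\alpha_0$-type correction terms arising from the off-diagonal blocks of $C'_3$ together with the naive iterated integrals of $\alpha_0,\alpha'_1$ so that, after application of the shuffle identities, every non-commutator word cancels against a term coming from $\exp\!\bigl(\int_b^x\alpha'_1(A_1+\lambda[[A_0,A_1],A_1])\bigr)$. The two genuinely new features compared to Proposition \ref{Prop - Level 2 TB Map EC} are (i) the appearance of $\alpha'_0=\tfrac12 F^2\alpha_0-\lambda dF$ in the block responsible for the $[[A_0,A_1],A_1]$ coefficient, which is where the constant $\lambda$ from Proposition \ref{Proposition - GT on UC of EC Level 3} enters, and (ii) the mixed words $F\alpha_0\cdot\alpha_0,\;\alpha_0\cdot F\alpha_0,\;F\alpha_0\cdot \alpha'_1$ which must be assembled into the stated $[A_0,[A_1,A_0]]$ and $[[A_0,A_1],A_1]$ coefficients. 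Once this matching is verified, $u_3(x)$ is primitive (being a $K_v$-linear combination of nested commutators of the primitive elements $A_0,A_1$), hence group-like after exponentiation, and the class $[u_3(x)]\in F^0U_3^{dr}\setminus U_3^{dr}$ is the required value of $j^{dr}_3(x)$.
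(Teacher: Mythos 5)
Your proposal follows essentially the same route as the paper's proof: gauge-transform to get $C'_3$ (whose lower-right block is the level-$2$ matrix $C'_2$), read off $p^{cr}_3(x)$ as the parallel transport of $1\in b^*\Res_\PP\UU_3$ along $d+C'_3$, and then use the shuffle relations to match it against $\exp\bigl(\int_b^x\alpha'_1(A_1+\lambda[[A_0,A_1],A_1])\bigr)\exp(u_3(x))$, with the Hodge generators taken from Proposition \ref{Proposition - HF on UC of EC Level 3}. The only difference is that the paper actually carries out the bookkeeping you defer (regrouping the $F\alpha_0\cdot\alpha_0$, $\alpha_0\cdot F\alpha_0$, $F\alpha_0\cdot\alpha'_1$, $\alpha'_1\cdot F\alpha_0$ and $\alpha'_0$ terms into the stated commutator coefficients), so your outline is correct and complete in approach.
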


\begin{proof}
First we note that with the previously computed gauge transformation $G_3$ as computed in Proposition \ref{Proposition - GT on UC of EC Level 3}:

\[
C_3'=\left(
\begin{array}{cc}
0_4 & D_3' \\
0_{3 \times 4} & C_2' 
\end{array}
\right), \;\; 
D_3' = \left( \begin{array}{ccccccc}
				-\alpha_0 & 0 & 0 & 0 & 0 & 0 & 0 \\
				0 & -\alpha_0 & 0 & 0 & 0 & 0 & 0 \\
				0 & 0 & -\alpha_0 & 0 & -F \alpha_0 & 0 & 0 \\
				0 & 0 & 0 & -\alpha_0 & 0 & -F \alpha_0 & -\alpha_0' \\
				-\alpha_1' & 0 & 0 & 0 & F \alpha_0 & 0 & 0 \\
				0 & -\alpha_1' & 0 & 0 & 0 & F \alpha_0 & 2\alpha_0' \\
				0 & 0 & -\alpha_1' & 0 & 0 & 0 & -\alpha_0' \\
				0 & 0 & 0 & -\alpha_1' & 0 & 0 & 0 \end{array} \right)
\] where $F$ is as in the proof of Proposition \ref{Prop - Level 2 TB Map EC}. Therefore, the parallel transport of $1$ on $b^* \Res_{\PP}\U_3$ to $x^*\U'_3$ for $x \in Y_v(K_v)$ is given by

\begin{align*}
p^{cr}_3(x)= &1+ \sum_{|w| \leq 3} \int_b^x \alpha_w w + \int_b^x F\alpha_0 (A_0A_1 -A_1A_0)  + \int_b^x \alpha_0' (A_0A_1A_1-2A_1A_0A_1 +A_1A_1A_0)\\
& + \int_b^x F\alpha_0\cdot \alpha_0 (A_0A_1A_0-A_1A_0A_0) + \int_b^x \alpha_0\cdot F\alpha_0 (A_0A_0A_1-A_0A_1A_0) \\
& + \int_b^x F\alpha_0\cdot \alpha_1' (A_0A_1A_1-A_1A_0A_1) + \int_b^x \alpha_1'\cdot F\alpha_0 (A_1A_0A_1-A_1A_1A_0).
\end{align*}
We find that
\begin{align*}
&\int_b^x \alpha_0\cdot F\alpha_0 (A_0A_0A_1-A_0A_1A_0) + \int_b^x F\alpha_0\cdot \alpha_0 (A_0A_1A_0-A_1A_0A_0) \\
& = \frac{1}{2}\int_b^x \alpha_0\cdot F\alpha_0 [A_0,[A_0,A_1]] + \frac{1}{2} \int_b^x F\alpha_0\cdot \alpha_0 [[A_0,A_1],A_0]\\
& + \frac{1}{2}\int_b^x F\alpha_0 \int_b^x \alpha_0 ([A_0,A_1]A_0+A_0[A_0,A_1]).
\end{align*}
Observe also that 
\begin{align*}
& \int_b^x F\alpha_0\cdot \alpha_1' (A_0A_1A_1-A_1A_0A_1) + \int_b^x \alpha_1'\cdot F\alpha_0 (A_1A_0A_1-A_1A_1A_0) \\
& =\int_b^x F\alpha_0 \cdot \alpha_1' [[A_0,A_1],A_1] + \int_b^x \alpha_1'\int_b^xF \alpha_0 A_1[A_0,A_1].	
\end{align*}

Putting these formulations together and using the approach that we have previously taken we find that
\[
p^{cr}_3(x) = \exp\left(\int_b^x \alpha_1' (A_1 + \lambda[[A_0,A_1],A_1]\right) \exp(u_3(x))
\]
where $u_3(x)$ is as in the statement of the proposition. To define $j^{dr}_3$ at points not in $Y_v(K_v)$ we proceed as before. 
  \end{proof}

\subsection{The level $2$ period map on hyperelliptic curves}

We again consider a genus $g$ odd hyperelliptic curve $C$ and determine the level $2$ unipotent Albanese map on $X:=C- \lbrace \PP \rbrace$ with tangential basepoint following the approach used above. 

\begin{proposition} \label{Prop - level 2 tb map hec}
With $b$ a tangential basepoint at $\PP$ on an odd affine hyperelliptic curve $X$ of genus $g$ the level $2$ unipotent Albanese map $j^{dr}_2: X_v(K_v) \rightarrow F^0U^{dr}_2 \setminus U^{dr}_2$ on points $x \in X_v(K_v) \cap Y_v(K_v)$ is given by

\[
\begin{split}
j^{dr}_2(x)= & \sum_{k=0}^{g-1} \int_b^x \alpha_k A_k + \frac{1}{2} \sum_{k,l=0}^{g-1} \int_b^x \alpha_k\alpha_l [A_k,A_l] + \sum_{k=0}^{g-1}\sum_{l=g}^{2g-1} \int_b^x (\alpha_k \alpha'_l+c_2^{2gl+k+1,0}) [A_k,A_l] \\
& + \sum_{k=g}^{2g-1} \sum_{\substack{ 0 \leq i < g \\ g \leq j < 2g}} c_{ijk}\int_b^x \alpha'_k[A_j,A_i]
\end{split}
\]

\end{proposition}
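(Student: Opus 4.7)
The plan is to mirror the approach used for elliptic curves in Proposition \ref{Prop - Level 2 TB Map EC} but now using the Hodge filtration on $\U_2$ that was established in Proposition \ref{Proposition - HF on HEC Level 2}. First I would apply Algorithm \ref{Algorithm - Computing GT on UC of HEC} to write down the connection matrix $C'_2$ of the gauge-transformed connection $\U'_2$ on the open $Y$ containing $\PP$. The block structure of this matrix is the one computed in Section \ref{Subsection - Logarithmic Extensions on HEC}: the entries of $D'_2$ are the $1$-forms $c^{r,0}_2$, where for $k<g$ the form $c^{k+1,0}_1=-\alpha_k$ is unchanged, while for $k\geq g$ we must write $c^{k+1,0}_1 = -\alpha'_k := -(\alpha_k - dh^{k+1,0}_1)$, and the new level-$2$ forms $c^{2gl+k+1,0}_2$ arise from the mixing formula in Step II.(2) of the algorithm.

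Next I would compute the Frobenius invariant path $p^{cr}_2(x)$ by parallel-transporting $1\in b^*\Res_{\PP}\UU_2$ to $x^*\UU_2$ along $\nabla'=d+C'_2$, using the tangential basepoint recipe of Algorithm \ref{Algorithm - Computing Integrals with TB} to assign meaning to iterated integrals from $b$. This produces an expression of the shape
\[
p^{cr}_2(x) = 1 + \sum_{k=0}^{2g-1} \int_b^x \gamma_k\, A_k + \sum_{k,l=0}^{2g-1}\left(\int_b^x \gamma_k\gamma_l + \int_b^x c^{2gl+k+1,0}_2\right) A_k A_l,
\]
where $\gamma_k = \alpha_k$ for $k<g$ and $\gamma_k = \alpha'_k$ for $k\geq g$, and where the additional $c^{2gl+k+1,0}_2$ integral vanishes unless $k<g \leq l$ (by the regularity statements used to construct the gauge transformation).

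With this formula in hand, the computation reduces to the same algebraic manipulation carried out in the proof of Proposition \ref{Prop - level 2 map hec}. Using the shuffle identity (\ref{Iterated Integrals Property 1}) I would regroup the double integrals into commutator form, and then use the generators of $F^0\UU_2$ produced by Proposition \ref{Proposition - HF on HEC Level 2} --- namely $1 + \sum a_{ij}[A_i,A_j]$, the lifts $A_k + \sum c_{ijk}[A_i,A_j]$ for $k\geq g$, and the quadratic monomials $A_rA_s$ with $r,s\geq g$ --- to factor off a primitive element $p^H_2(x)\in F^0$. The remaining primitive factor is exactly $u_2(x)$, yielding the claimed formula on $X_v(K_v)\cap Y_v(K_v)$.

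The main obstacle is purely bookkeeping: correctly tracking the commutator pairings $[A_k,A_l]$ versus $[A_j,A_i]$ and verifying that the constants $c_{ijk}$ from the Hodge filtration absorb precisely the terms involving $h^{j+1,0}_1$ that accumulate when substituting $\alpha'_l = \alpha_l - dh^{l+1,0}_1$ into the iterated integrals. The regularity conditions in Proposition \ref{Proposition - HF on HEC Level 2}(2) guarantee that the combinations $a_{ij}+h^{2gi+j+1,0}_2 - \sum_f c_{ijk}h^{f,0}_1$ are regular at $\PP$, which is exactly what is needed for the decomposition $p^{cr}_2(x) = p^H_2(x)\exp(u_2(x))$ to land in the prescribed form; this identification is where the proof's substance lies. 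Once $x$ leaves $]P[$, the co-product formula for iterated integrals with one tangential endpoint extends the formula to all of $X_v(K_v)$ exactly as in the remark following Proposition \ref{Prop - Level 2 TB Map EC}.
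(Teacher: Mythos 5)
Your overall strategy is the one the paper uses: compute $C'_2$ from Algorithm \ref{Algorithm - Computing GT on UC of HEC}, read off $p^{cr}_2(x)$ as the parallel transport of $1 \in b^*\Res_{\PP}\UU_2$ along $d+C'_2$, and peel off a group-like element of $F^0P^{dr}_2(x)$ using the generators from Proposition \ref{Proposition - HF on HEC Level 2}, extending beyond $Y_v$ by the co-product formula. The gap is your claim that the correction integrals $\int_b^x c_2^{2gl+k+1,0}$ ``vanish unless $k<g\leq l$.'' This is false, and the justification you offer (the regularity statements in the construction of the gauge transformation) does not give it: the forms $c_2^{r,0}$ are only arranged to have at worst logarithmic poles at $\PP$, not to vanish. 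They do vanish when both indices are $<g$ (there one may take $h^{k+1,0}_1=h^{l+1,0}_1=0$), but whenever at least one index is $\geq g$ they are genuinely nonzero. Consequently the raw parallel transport contains the full double sum $-\sum_{k,l=0}^{2g-1}\int_b^x c_2^{2gk+l+1,0}\,A_kA_l$, not just the cross terms you retain, and it pairs $c_2^{2gk+l+1,0}$ (not $c_2^{2gl+k+1,0}$) with $A_kA_l$.

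Two steps are then needed that your outline omits. First, the relation between $c_2^{2gk+l+1,0}$ and $c_2^{2gl+k+1,0}$ coming from the construction of the $h^{r,0}_2$ (their sum is exact, as exploited in the proof of Proposition \ref{Proposition - HF on HEC Level 2}), which is what converts the non-commutator double sum into $\sum_{k<g\leq l}\int_b^x c_2^{2gl+k+1,0}[A_k,A_l]$ plus $\tfrac{1}{2}\sum_{k,l\geq g}\int_b^x c_2^{2gl+k+1,0}[A_k,A_l]$ and produces the index swap appearing in the statement. Second, the latter sum involves only $[A_k,A_l]$ with $k,l\geq g$; these lie in the span of the generators $A_rA_s$ ($r,s\geq g$) of $F^0\U_2$ and, being words of length two, are central, so this factor must be absorbed into the Hodge trivialisation $p^H_2(x)$ rather than into $u_2(x)$. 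Without these two steps the decomposition $p^{cr}_2(x)=p^H_2(x)u_2(x)$ does not land in the claimed coset representative; with them, the rest of your argument goes through exactly as in the proof of Proposition \ref{Prop - level 2 map hec}.
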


\begin{proof}
We will find after application of Algorithm \ref{Algorithm - Computing GT on UC of HEC} that 

\begin{align*}
p^{cr}_2(x) = & 1 + \sum _{k=0}^{g-1} \int^x_b\alpha_k A_k + \sum _{k=g}^{2g-1} \int^x_b\alpha'_k A_k + \sum _{k=0}^{g-1}\sum _{l=g}^{2g-1} \int_b^x \alpha_k\alpha'_l A_kA_l + \sum _{k=g}^{2g-1}\sum _{l=0}^{g-1} \int_b^x \alpha'_k\alpha_l A_kA_l \\
& + \sum _{k=g}^{2g-1}\sum _{l=g}^{2g-1} \int_b^x \alpha'_k\alpha'_l A_kA_l -  \sum _{k=0}^{2g-1}\sum _{l=0}^{2g-1} \int_b^x c_2^{2gk+l+1,0} A_kA_l\\
\end{align*}

where $\alpha_k'=\alpha_k - dh^{k+1,0}_1$ and $c_2^{2gk+l+1,0}$ are $1$-forms on $C$ with at worst logarithmic poles at $\PP$. The computations in the proof of Proposition \ref{Proposition - HF on HEC Level 2} show that we can take $c_2^{2gk+l+1,0}=0$ when $k,l <g$ and otherwise $c_2^{2gk+l+1,0}=c_2^{2gl+k+1,0}$. Now the extra expression appearing in $p^{cr}_2(x)$ 

\[
-  \sum _{k=0}^{2g-1}\sum _{l=0}^{2g-1} \int_b^x c_2^{2gk+l+1,0} A_kA_l
\]
can be rewritten as 
\begin{align*}
& -  \sum_{k=0}^{g-1}\sum_{l=g}^{2g-1} \int_b^x c_2^{2gk+l+1,0} [A_k,A_l]- \frac{1}{2} \sum_{k,l=g}^{2g-1}\int_b^x c_2^{2gk+l+1,0} [A_k,A_l] \\
& = \sum_{k=0}^{g-1}\sum_{l=g}^{2g-1} \int_b^x c_2^{2gl+k+1,0} [A_k,A_l]+ \frac{1}{2} \sum_{k,l=g}^{2g-1} \int_b^x c_2^{2gl+k+1,0} [A_k,A_l].
\end{align*}

We can then conclude by noting that 
\begin{align*}
p^{cr}_2(x) = & \exp\left( \sum_{k=g}^{2g-1} \int_b^x \alpha_k' \left(A_k + \sum_{\substack{0 \leq i < g \\ g \leq j < 2g}} a^{2,1+j+i(2g)}_{1,k+1}[A_i,A_j] \right) + \frac{1}{2}\sum_{l,k=g}^{2g-1} \int_b^x \alpha_k'\alpha_l'[A_k,A_l] \right) \\
& \times \exp \left( \tilde{u}_2(x)\right) \exp \left( \sum_{k=0}^{g-1}\sum_{l=g}^{2g-1} \int_b^x c_2^{2gl+k+1,0} [A_k,A_l] \right) \exp \left( \frac{1}{2} \sum_{k,l=g}^{2g-1} \int_b^x c_2^{2gl+k+1,0} [A_k,A_l] \right) 
\end{align*}
where
\begin{align*}
\tilde{u}_2(x)=& \sum_{k=0}^{g-1} \int_b^x \alpha_k A_k + \frac{1}{2} \sum_{k,l=0}^{g-1} \int_b^x \alpha_k\alpha_l [A_k,A_l] + \sum_{k=0}^{g-1}\sum_{l=g}^{2g-1} \int_b^x \alpha_k \alpha'_l [A_k,A_l] \\
& + \sum_{k=g}^{2g-1} \sum_{\substack{ 0 \leq i < g \\ g \leq j < 2g}} a^{2,1+j+i(2g)}_{1,k+1}\int_b^x \alpha'_k[A_j,A_i].
\end{align*}

Since $\sum_{k=0}^{g-1}\sum_{l=g}^{2g-1} \int_b^x c_2^{2gl+k+1,0} [A_k,A_l]$ and $\frac{1}{2} \sum_{k,l=g}^{2g-1} \int_b^x c_2^{2gl+k+1,0} [A_k,A_l]$ are expressions in words of length 2 the last two exponentials above commute with all others. Since $[A_k,A_l]$ is a generator of $F^0\U_2$ for $k,l \geq g$ then we can conclude that $j^{dr}_2(x)$ is as in the statement of the proposition. 
  \end{proof}
  

\indexprologue{This index lists the notation used throughout the paper for ease of reference.}
\printindex \label{index}



\end{document}